\documentclass[12pt]{elsarticle}
\usepackage{amsfonts,amssymb,mathrsfs, amsthm, textcomp,amscd,amsbsy,amsmath}
\usepackage[small,nohug,heads=littlevee]{diagrams}
\diagramstyle[labelstyle=\scriptstyle]

\usepackage{verbatim}
\usepackage{enumerate}
\usepackage{amsmath}
\usepackage{geometry}
\usepackage[latin5]{inputenc}
\usepackage{yfonts}

   \newtheorem{theorem}{Theorem}[section]
   \newtheorem{lemma}[theorem]{Lemma}

\newtheorem{remark}[theorem]{Remark}
\newtheorem{prop}[theorem]{Proposition}
\newtheorem{corollary}[theorem]{Corollary}
\newtheorem{definition}[theorem]{Definition}

\newcommand{\Aut}{\mathrm{Aut}}

\newcommand{\Def}{\mathrm{Def}}

\newcommand{\FF}{\mathbb{F}}
\newcommand{\FFT}{\mathbb{F}T}
\newcommand{\FFTD}{\mathbb{F}T^{\Delta}}

\newcommand{\Hom}{\mathrm{Hom}}

\newcommand{\Ind}{\mathrm{Ind}}

\newcommand{\Inf}{\mathrm{Inf}}

\newcommand{\Isom}{\mathrm{Iso}}

\newcommand{\NN}{\mathbb{N}}

\newcommand{\Res}{\mathrm{Res}}
\newcommand{\Out}{\mathrm{Out}}

\newcommand{\ZZ}{\mathbb{Z}}
\newcommand{\DD}{D^{\Delta}}
\newcommand{\TD}{T^{\Delta}}
\newcommand{\beps}{\mathbf{e}_{P,s}}
\newcommand{\beqt}{\mathbf{e}_{Q,t}}

\newcommand{\Ps}{\langle Ps \rangle}
\newcommand{\Qt}{\langle Qt \rangle}
\newcommand{\Indinf}{\mathrm{Indinf}}
\def\sur{\overline}



\usepackage{amssymb}


\journal{Journal of Algebra}

\begin{document}

\begin{frontmatter}



\title{Diagonal $p$-permutation functors}


\author{Serge Bouc, Deniz Y{\i}lmaz\corref{cor1}\fnref{fn2}}

\address{CNRS-LAMFA Universit{\'e} de Picardie-Jules Verne, 33, rue St Leu, 80039, Amiens Cedex 01 - France} 

\address{University of California, Santa Cruz Department of Mathematics CA 95064 USA}

\cortext[cor1]{The second author is supported by the Chateaubriand Fellowship of the Office of Science and Technology of the Embassy of France in the United States.}
\fntext[fn2]{The second author is thankful to LAMFA for their hospitality during the visit.}
\begin{footnotesize}
\begin{abstract}
Let $k$ be an algebraically closed field of positive characteristic $p$, and $\FF$ be an algebraically closed field of characteristic 0. We consider the $\FF$-linear category $\FF pp_k^\Delta$ of finite groups, in which the set of morphisms from $G$ to $H$ is the $\FF$-linear extension $\FF T^\Delta(H,G)$ of the Grothendieck group $T^\Delta(H,G)$ of $p$-permutation $(kH,kG)$-bimodules with (twisted) diagonal vertices. The $\FF$-linear functors from $\FF pp_k^\Delta$ to $\FF\hbox{\rm-Mod}$ are called {\em diagonal $p$-permutation functors}. They form an abelian category $\mathcal{F}_{pp_k}^\Delta$.\par
We study in particular the functor $\FFTD$ sending a finite group $G$ to the Grothendieck group $\FFT(G)$ of $p$-permutation $kG$-modules, and show that $\FFTD$ is a semisimple object of $\mathcal{F}_{pp_k}^\Delta$, equal to the direct sum of specific simple functors parametrized by isomorphism classes of pairs $(P,s)$ of a finite $p$-group $P$ and a generator $s$ of a $p'$-subgroup acting faithfully on $P$. This leads to a precise description of the evaluations of these simple functors. In particular, we show that the simple functor indexed by the trivial pair $(1,1)$ is isomorphic to the functor sending a finite group $G$ to $\FF K_0(kG)$, where $K_0(kG)$ is the group of projective $kG$-modules.

\end{abstract}



\begin{keyword}
biset functors\sep $p$-permutation\sep twisted diagonal.
\MSC[2010] 18B99\sep 20J15\sep 16W99



\end{keyword}
\end{footnotesize}

\end{frontmatter}


\section{Introduction}
Let $p$ be a prime number. Throughout we denote by $\FF$ an algebraically closed field of characteristic zero, and by $k$ an algebraically closed field of characteristic $p$. The $p$-permutation modules play a crucial role in the study of modular representation theory of finite groups. A splendid Rickard equivalence, introduced by Rickard \cite{Rickard}, between blocks of finite group algebras is given by a chain complex consisting of $p$-permutation bimodules. Also a $p$-permutation equivalence, introduced by Boltje and Xu \cite{BoltjeXu}, and studied extensively later by Boltje and Perepelitsky \cite{Philipp}, is an element in the Grothendieck group of $p$-permutation bimodules. 

In \cite{MD}, Ducellier studied $p$-permutation functors: Consider the category $\FF pp_k$ where the  objects are finite groups and the morphisms between groups $G$ and $H$ are given by the Grothendieck group $\FF\otimes_{\ZZ}T(H,G)$ of $p$-permutation $(kH,kG)$-bimodules. A $p$-permutation functor is an $\FF$-linear functor from $\FF pp_k$ to $\FF$-$\mathrm{Mod}$. The indecomposable direct summands of the bimodules that appears in a $p$-permutation equivalence between blocks of finite group algebras have twisted diagonal vertices. Therefore, inspired by the work of Ducellier, we consider a category with less morphisms: Let $\FF pp_k^\Delta$ be a category where the objects are finite groups and the morphisms between groups $G$ and $H$ are given by the Grothendieck group $\FF\otimes_{\ZZ}T^{\Delta}(H,G)$ of $p$-permutation $(kH,kG)$-bimodules whose indecomposable direct summands have twisted diagonal vertices. An $\FF$-linear functor from $\FF pp_k^\Delta$ to $\FF$-$\mathrm{Mod}$ is called a \textit{diagonal $p$-permutation functor}. 

By \cite{Boucsimple}, the simple diagonal $p$-permutation functors are parametrized by the pairs $(G,V)$ of a finite group $G$ and a simple module $V$ of the essential algebra $\mathcal{E}^{\Delta}(G)=\mathrm{End}_{\FF pp_k^\Delta}(G)/I$ at $G$, where $I$ is the ideal generated by the morphisms that factor through groups of smaller order. We show that the essential algebra  $\mathcal{E}^{\Delta}(G)$ is isomorphic to the essential algebra studied in \cite{MD}. As a result this implies that the essential algebra  $\mathcal{E}^{\Delta}(G)$ is non-zero if and only if the group $G$ is of the form $P\rtimes \langle s\rangle$ where $P$ is a $p$-group and $s$ is a generator of a $p'$-cyclic group acting faithfully on $P$. Moreover in that case there is an algebra isomorphism  $\mathcal{E}^{\Delta}(G)\cong \big(\FF[X]/\Phi_n[X]\big) \rtimes \Out(G)$ where $n$ is the order of $s$. See Theorem \ref{essentialfinal}.  

We also study the functor $\FFTD$ that sends a finite group~$G$ to the Grothendieck group $\FF T(G)$ of $p$-permutation $kG$-modules. We describe the subfunctor lattice (Theorem \ref{subfunctorlattice}) and simple quotients (Proposition \ref{simplequotient}) of $\FFTD$. We also give a description for the $\FF$-dimension of the evaluations of simple quotients of $\FFTD$ at a finite group $G$ (Theorem \ref{dimension}). Moreover we prove that the simple functor $S_{1,1}$ that corresponds to the pair $(1,1)$ is isomorphic to the functor that sends a finite group~$G$ to the $\FF$-linear extension $\FF K_0(kG)$ of the Grothendieck group of projective $kG$-modules (Theorem \ref{projectivefunctor}).

\section{Preliminaries}
Let $G$ and $H$ be finite groups. We denote by $p_1: G\times H\to G$ and $p_2:G\times ~H\to H$ the canonical projections. Let $X\leqslant G\times H$ be a subgroup. We define the subgroups $k_1(X):=p_1(X\cap \mathrm{ker}(p_2))$ and $k_2(X):=p_2(X\cap \mathrm{ker}(p_1))$ of $p_1(X)$ and $p_2(X)$, respectively. Note that $k_1(X)\times k_2(X)$ is a normal subgroup of $X$. Moreover, $k_i(X)$ is a normal subgroup of $p_i(X)$ and one has a canonical isomorphism $X/(k_1(X)\times k_2(X))\to p_i(X)/k_i(X)$ induced by the projection map $p_i$ for $i=1,2$. 

Let $\phi:P\to Q$ be an isomorphism between subgroups $P\leqslant G$ and $Q\leqslant H$. Then $\{(\phi(x),x): x\in P\}$ is a subgroup of $H\times G$ and a subgroup of that form is called a \textit{twisted diagonal} subgroup of $H\times G$. Note that a subgroup $X\leqslant H\times G$ is a twisted diagonal subgroup if and only if $k_1(X)=1$ and $k_2(X)=1$.   

Let $P$ be a subgroup of $G$ and $M$ be a $kG$-module. We denote by $M^P$ the $k$-vector space of $P$-fixed points of $M$. If $Q\leqslant P$ is a subgroup, then the map $\mathrm{Tr}_Q^P: M^Q\to M^P$ defined by $\mathrm{Tr}(m)= \sum_{x\in [P/Q]} x\cdot m$ is called the \textit{relative trace map}.  The quotient 
\[
M[P]:=M^P/\sum_{Q< P} \mathrm{Tr}_Q^P(M^Q)
\]
is called the \textit{Brauer quotient} of $M$ at $P$. Note that $M[P]$ is a $k\overline{N}_G(P)$-module, where $\overline{N}_G(P):=N_G(P)/P$. We have $M[P]=0$ if $P$ is not a $p$-group.  

A $(kG,kH)$-bimodule $M$ can be viewed as a $k(G\times H)$-module via $(g,h)\cdot m:= gmh^{-1}$, for $(g,h)\in G\times H$ and $m\in M$. Similarly a $k(G\times H)$-module can be viewed as a $(kG,kH)$-bimodule. We will usually switch between these two points of views. 

\begin{definition}
Let $G$ be a finite group. A $kG$-module $M$ is called a \textit{permutation} module, if $M$ has a $G$-stable $k$-basis. A $p$-permutation $kG$-module is a $kG$-module $M$ such that $\Res^G_S M$ is a permutation $kS$-module for a Sylow $p$-subgroup $S$ of $G$. 
\end{definition}

For a finite group $G$ we denote by $T(G)$ the Grothendieck group of $p$-permutation $kG$-modules with respect to direct sum decompositions. If $M$ is a $p$-permutation $kG$-module, then the class of $M$ in $T(G)$ will be abusively denoted by $M$. The group $T(G)$ has a ring structure induced by the tensor product of modules over $k$, and $T(G)$ will be called the {\em ring of $p$-permutation modules} of $G$, for short. If $H$ is another finite group, we set $T(G,H):=T(G\times H)$.  We denote by $T^{\Delta}(G,H)$ the subgroup of $T(G,H)$ spanned by $p$-permutation $k(G\times H)$-modules whose indecomposable direct summands have twisted diagonal vertices. 

Let $\mathcal{P}_{G,p}$ denote the set of pairs $(P,E)$ where $P$ is a $p$-subgroup of $G$ and $E$ is a projective indecomposable $k\overline{N}_G(P)$-module. The group $G$ acts on the set $\mathcal{P}_{G,p}$ via conjugation and we denote by $[\mathcal{P}_{G,p}]$ a set of representatives of $G$-orbits of $\mathcal{P}_{G,p}$. For $(P,E)\in \mathcal{P}_{G,p}$, let $M_{P,E}$ denote the unique (up to isomorphism) indecomposable $p$-permutation $kG$-module with the property that $M_{P,E}[P]\cong E$. Note that $M_{P,E}$ has the group $P$ as a vertex \cite[Theorem 3.2]{Broue}. We denote by $\mathcal{P}_{G\times H,p}^{\Delta}$ the set of pairs $(P,E)\in \mathcal{P}_{G\times H,p}$ where $P$ is a twisted diagonal $p$-subgroup of $G\times H$. 

\begin{remark}
The isomorphism classes of the modules $M_{P,E}$ where $(P,E)\in \mathcal{P}_{G\times H,p}^{\Delta}$ form a $\ZZ$-basis for $T^{\Delta}(G,H)$.
\end{remark}

\begin{definition}\cite[Definition 2.3.1]{MD}
Let $(P,s)$ be a pair where $P$ is a $p$-group and $s$ is a generator of a $p'$-cyclic group acting on $P$. We denote the semidirect product $P\rtimes \langle s\rangle$ by $\Ps$. Let $(Q,t)$ be another such pair. We say that the pairs $(P,s)$ and $(Q,t)$ are isomorphic if there are group isomorphisms $\phi: P\to Q$ and $\psi:\langle s\rangle\to \langle t\rangle$ such that $\psi(s)=q\cdot t$ for some $q\in Q$ and $\phi(s\cdot u)=\psi(s)\cdot\phi (u)$ for all $u\in P$. In that case we write $(P,s)\simeq (Q,t)$. 
\end{definition}

\begin{lemma}\cite[Proposition 2.3.3]{MD}
Let $(P,s)$ and $(Q,t)$ be two pairs. Then $(P,s)\simeq (Q,t)$ if and only if there is a group isomorphism $f:\Ps\to \Qt$ such that $f(s)$ is conjugate to $t$.
\end{lemma}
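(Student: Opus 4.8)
The plan is to build a dictionary between an abstract isomorphism $f:\Ps\to\Qt$ with $f(s)$ conjugate to $t$ on the one hand, and the data $(\phi,\psi,q)$ appearing in the definition of $\simeq$ on the other. Two elementary facts drive everything. First, $P\trianglelefteq\Ps$ is a normal $p$-subgroup while $\langle s\rangle$ is a $p'$-group, so $P$ contains, and hence equals, $O_p(\Ps)$; in particular $P$ is \emph{characteristic} in $\Ps$, and likewise $Q=O_p(\Qt)$ is characteristic in $\Qt$. Second, since $\gcd(|Q|,n)=1$ where $n=|t|$, the Schur--Zassenhaus theorem says all complements of $Q$ in $\Qt$ are $Q$-conjugate; consequently each such complement is $\langle{}^{q'}t\rangle$ for some $q'\in Q$, and among its elements exactly one lies over the generator $t$ of $\Qt/Q\cong\langle t\rangle$.

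For the implication ``$(P,s)\simeq(Q,t)\Rightarrow f$ exists'', I would take $\phi:P\to Q$, $\psi:\langle s\rangle\to\langle t\rangle$ and $q\in Q$ as in the definition, so that $\psi(s)=qt$ (an element of order $n$, since $\psi$ is an isomorphism) and $\phi({}^su)={}^{qt}\phi(u)$ for all $u\in P$. Writing each element of $\Ps$ uniquely as $us^i$ with $u\in P$ and $0\leqslant i<n$, I would set $f(us^i):=\phi(u)(qt)^i$. The only point needing verification is that $f$ is multiplicative; expanding $(us^i)(vs^j)=u\,{}^{s^i}v\,s^{i+j}$, this comes down to $\phi({}^{s^i}v)={}^{(qt)^i}\phi(v)$, which follows from the compatibility relation by induction on $i$. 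Since $f$ is injective (reduce modulo $Q$ to force $i\equiv 0$, then use injectivity of $\phi$) and $|\Ps|=|P|\,n=|Q|\,n=|\Qt|$, $f$ is an isomorphism. Finally $f(\langle s\rangle)=\langle qt\rangle$ is a complement of $f(P)=Q$, hence equals ${}^{q'}\langle t\rangle$ for some $q'\in Q$ by Schur--Zassenhaus; comparing the unique elements of $\langle qt\rangle$ and $\langle{}^{q'}t\rangle$ lying over $t$ forces $qt={}^{q'}t$, so $f(s)=qt$ is conjugate to $t$, as wanted.

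For the converse, let $f:\Ps\to\Qt$ be an isomorphism with $f(s)={}^x t$, $x\in\Qt$. Writing $x=q t^j$ with $q\in Q$ gives $f(s)={}^q t$, and replacing $f$ by $c_{q^{-1}}\circ f$, where $c_{q^{-1}}$ is conjugation by $q^{-1}$ (an automorphism of $\Qt$ fixing $Q$ setwise), I may assume $f(s)=t$. Characteristicity of the $p$-cores gives $f(P)=Q$, so $f$ restricts to isomorphisms $\phi:=f|_P:P\to Q$ and $\psi:=f|_{\langle s\rangle}:\langle s\rangle\to\langle t\rangle$ with $\psi(s)=t$. For $u\in P$, $\phi({}^su)=f(sus^{-1})=f(s)\phi(u)f(s)^{-1}={}^{\psi(s)}\phi(u)$, which is precisely the compatibility condition; thus $(P,s)\simeq(Q,t)$ (with the element $q$ of the definition equal to $1$).

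I expect no genuine obstacle here: the argument is essentially bookkeeping once the two structural inputs are in hand. The points that need a little care are (i) the conventions in the definition of $\simeq$, in particular reconciling the product ``$qt$'' with a conjugate of $t$ --- which is exactly what the Schur--Zassenhaus step does --- and (ii) checking that the normalizations (conjugating $f$ so that $f(s)=t$; absorbing the witnessing element into the conjugating element) are legitimate, for which the crucial fact is that $P$ is the $p$-core of $\Ps$ and so is preserved by every isomorphism. Note that faithfulness of the action of $s$ on $P$ is not used at any stage.
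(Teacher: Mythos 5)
The paper itself gives no proof of this lemma --- it is quoted from \cite[Proposition 2.3.3]{MD} --- and your argument is correct and is the expected one: in one direction, Schur--Zassenhaus conjugacy of complements of $Q$ in $\Qt$ to see that $qt$ is $Q$-conjugate to $t$, and in the other, the normalization $f(s)=t$ together with the fact that $P=O_p(\Ps)$ and $Q=O_p(\Qt)$ are characteristic. The only point worth flagging is interpretive: the condition ``$\psi:\langle s\rangle\to\langle t\rangle$ with $\psi(s)=q\cdot t$'' must be read, as you do, as saying that $\psi(s)=qt$ is an element of order $|s|$ in the coset $Qt$ (so that $\langle qt\rangle$ is a complement of $Q$ and $(qt)^{n}=1$, which is exactly what your well-definedness of $f$ and the final conjugacy step require); with that reading the proof is sound, and you are also right that faithfulness of the action is never used.
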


Let $\mathcal{Q}_{G,p}$ denote the set of pairs $(P,s)$ where $P$ is a $p$-subgroup of $G$ and $s\in  N_{G}(P)$ is a $p'$-element. In that case $\Ps$ denotes the semidirect product $P\rtimes \langle s\rangle$ where the action of $\langle s\rangle$ on $P$ is induced by conjugation. The group $G$ acts on the set $\mathcal{Q}_{G,p}$ and we denote by $[\mathcal{Q}_{G,p}]$ a set of representatives of $G$-orbits. We denote by $\mathcal{Q}_{G\times H,p}^{\Delta}$ the set of pairs $(P,s)\in \mathcal{Q}_{G\times H,p}$ where $P$ is a twisted diagonal $p$-subgroup of $G\times H$.

For any pair $(P,s)\in \mathcal{Q}_{G,p}$ let $\tau^G_{P,s}$ denote the additive map $T(G)\to \FF$ that sends a $p$-permutation $kG$-module $M$ to the value of the Brauer character of $M[P]$ at $s$. The map $\tau^G_{P,s}$ is a ring homomorphism and it extends to an $\FF$-algebra homomorphism $\tau^G_{P,s}: \FF\otimes_{\ZZ} T(G)\to \FF$. The set $\{\tau^G_{P,s}: (P,s)\in [\mathcal{Q}_{G,p}]\}$ is the set of all species from $\FFT(G):=\FF\otimes_{\ZZ} T(G)$ to $\FF$ \cite[Proposition 2.18]{BT}. 

The algebra $\FFT(G)$ is split semisimple and its primitive idempotents $F^{G}_{P,s}$ are indexed by pairs $(P,s)\in [\mathcal{Q}_{G,p}]$ \cite[Corollary 2.19]{BT}. If $\phi:\langle s\rangle\to k^{\times}$ is a group homomorphism, we denote by $k_{\phi}$ the $k\langle s\rangle$-module $k$ on which the element $s$ acts as multiplication by $\phi(s)$.  Let $\widehat{\langle s\rangle}=\Hom(\langle s\rangle, k^{\times})$ denote the set of group homomorphisms.   By \cite[Theorem 4.12]{BT} we have the idempotent formula 
\[
F^G_{P,s} = \frac{1}{|P||s||C_{\overline{N}_G(P)}(s)|}\sum_{\substack{\varphi\in \widehat{\langle s\rangle} \\ L\leqslant \Ps \\ PL=\Ps}} \tilde{\varphi}(s^{-1})|L|\mu(L,\Ps)\Ind^G_L k^{\Ps}_{L,\varphi},
\]
where $k^{\Ps}_{L,\varphi}=\Res^{\Ps}_L \Inf^{\Ps}_{\langle s\rangle} k_{\varphi}$, and $\tilde{\varphi}$ is the Brauer character of $k_{\varphi}$.

By \cite[Proposition 2.7.8]{MD} we have another formula 
\[
F^G_{P,s} = \frac{1}{|C_{N_G(P)}(s)|}\sum_{\substack{\varphi\in \widehat{\langle s\rangle} \\ L\leqslant P \\ L^s=L}} \tilde{\varphi}(s^{-1})|C_L(s)|\mu\big((L,P)^s\big)\Ind^G_{\langle Ls\rangle} k^{\Ps}_{\langle Ls\rangle,\varphi}.
\]
Here $\mu\big((-,-)^s\big)$ is the M{\"o}bius function of the poset of $s$-stable subgroups of $P$.

\begin{lemma}
For finite groups $G$ and $H$, the set $\{F^{G\times H}_{P,s} : (P,s) \in [\mathcal{Q}_{G\times H,p}^{\Delta}]\}$ of primitive idempotents form an $\FF$-basis for the split semisimple algebra $\FFT^{\Delta}(G,H)$. 
\end{lemma}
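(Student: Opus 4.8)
The plan is to realize $\FFTD(G,H)$, as an $\FF$-subspace of the split semisimple algebra $\FFT(G\times H)=\FF\otimes_{\ZZ}T(G\times H)$, as the span of those primitive idempotents $F^{G\times H}_{P,s}$ for which the $p$-subgroup $P\leqslant G\times H$ is twisted diagonal. Since conjugation in $G\times H$ preserves twisted diagonality (the conditions $k_1(X)=1=k_2(X)$ are conjugation-invariant, as $H\times 1$ and $1\times G$ are normal in $H\times G$), the set $\mathcal{Q}^{\Delta}_{G\times H,p}$ is a union of $G\times H$-orbits, and one may choose $[\mathcal{Q}^{\Delta}_{G\times H,p}]$ as the subset of $[\mathcal{Q}_{G\times H,p}]$ consisting of pairs with twisted diagonal first component. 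It then suffices to establish the equality of $\FF$-subspaces
\[
\FFTD(G,H)=\bigoplus_{(P,s)\in[\mathcal{Q}^{\Delta}_{G\times H,p}]}\FF\, F^{G\times H}_{P,s},
\]
after which the assertion is immediate: the right-hand side is spanned by part of the basis of primitive idempotents of $\FFT(G\times H)$, so the displayed family is an $\FF$-basis of $\FFTD(G,H)$ (and $\FFTD(G,H)$ is, in particular, a split semisimple subalgebra).

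For the inclusion $\supseteq$, I would take a twisted diagonal $p$-subgroup $P\leqslant G\times H$ and a $p'$-element $s\in N_{G\times H}(P)$ and plug $\Ps$ into the idempotent formula \cite[Proposition 2.7.8]{MD} recalled above (the formula from \cite[Theorem 4.12]{BT} would serve just as well). It writes $F^{G\times H}_{P,s}$ as an $\FF$-linear combination of the modules $\Ind^{G\times H}_{\langle Ls\rangle}k^{\Ps}_{\langle Ls\rangle,\varphi}$, with $L$ running over $s$-stable subgroups of $P$ and $\varphi\in\widehat{\langle s\rangle}$. The point is that $k^{\Ps}_{\langle Ls\rangle,\varphi}$ is one-dimensional with $L$ acting trivially, so it is a $p$-permutation $k\langle Ls\rangle$-module with vertex exactly $L$; hence every indecomposable summand of the induced module has a vertex subconjugate in $G\times H$ to $L\leqslant P$, which, being a subgroup of (a conjugate of) the twisted diagonal group $P$, is again twisted diagonal. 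Thus every building block lies in $T^{\Delta}(G,H)$, and therefore so does $F^{G\times H}_{P,s}$.

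For the inclusion $\subseteq$, I would use the species to extract coordinates: under the isomorphism $\FFT(G\times H)\cong\prod_{(Q,t)\in[\mathcal{Q}_{G\times H,p}]}\FF$ sending $N$ to $(\tau^{G\times H}_{Q,t}(N))_{Q,t}$, the primitive idempotent $F^{G\times H}_{P,s}$ corresponds to the tuple with a $1$ in the $(P,s)$-slot and $0$ elsewhere, so $N=\sum_{(Q,t)}\tau^{G\times H}_{Q,t}(N)\,F^{G\times H}_{Q,t}$ for every $N\in\FFT(G\times H)$. It remains to see that $\tau^{G\times H}_{Q,t}$ vanishes on $\FFTD(G,H)$ whenever $Q$ is not twisted diagonal. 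By the Remark, $T^{\Delta}(G,H)$ is spanned by the $M_{P,E}$ with $P$ twisted diagonal, and $\tau^{G\times H}_{Q,t}(M_{P,E})$ is the value at $t$ of the Brauer character of the Brauer quotient $M_{P,E}[Q]$, which is zero unless $Q$ is $(G\times H)$-subconjugate to the vertex $P$ of $M_{P,E}$. If $Q$ is not twisted diagonal it cannot be subconjugate to any twisted diagonal $P$, so $M_{P,E}[Q]=0$ for all such $P$, and $\tau^{G\times H}_{Q,t}$ annihilates $T^{\Delta}(G,H)$, hence $\FFTD(G,H)$. Combining the two inclusions yields the displayed equality.

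I expect the one genuinely delicate step to be the vertex bookkeeping in the $\supseteq$ direction — verifying that the elementary modules $\Ind^{G\times H}_{\langle Ls\rangle}k^{\Ps}_{\langle Ls\rangle,\varphi}$ of the idempotent formula actually belong to $T^{\Delta}(G,H)$. This reduces to the fact that $L\leqslant P$ is twisted diagonal, combined with the standard behaviour of vertices under induction and passage to direct summands; granting that, everything else is formal manipulation with the known semisimple structure of $\FFT(G\times H)$ and its species.
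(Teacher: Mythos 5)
Your proof is correct, and its first half (showing each $F^{G\times H}_{P,s}$ with $P$ twisted diagonal lies in $\FFTD(G,H)$) is essentially the paper's argument: plug the pair into one of the recalled idempotent formulas and check that the elementary modules $\Ind k^{\Ps}_{\langle Ls\rangle,\varphi}$ have vertices subconjugate to the twisted diagonal group $P$ (the paper uses the \cite{BT} version of the formula and bounds the vertex by $L\cap{}^{x}P$; you use the \cite{MD} version and identify the vertex as $L$ itself — both come to the same thing). Where you genuinely diverge is the spanning direction: the paper concludes by a dimension count, asserting $\dim_{\FF}\FFTD(G,H)=\big|[\mathcal{P}^{\Delta}_{G\times H,p}]\big|=\big|[\mathcal{Q}^{\Delta}_{G\times H,p}]\big|$ and stopping there, whereas you prove the reverse inclusion directly by expanding any element of $\FFT(G\times H)$ along the species and showing that $\tau^{G\times H}_{Q,t}$ kills $T^{\Delta}(G,H)$ when $Q$ is not twisted diagonal, via Brou\'e's theorem that $M_{P,E}[Q]=0$ unless $Q$ is subconjugate to the vertex $P$, together with the fact that subgroups and conjugates of twisted diagonal subgroups are twisted diagonal. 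The paper's route is shorter but leans on the (unargued) matching of the two restricted parametrizations $[\mathcal{P}^{\Delta}]$ and $[\mathcal{Q}^{\Delta}]$; your route avoids that count entirely, needs only Brou\'e's vanishing criterion, and yields slightly more, namely the explicit expansion of every element of $\FFTD(G,H)$ in the diagonal idempotents (equivalently, that the non-diagonal coordinates of any $p$-permutation bimodule with twisted diagonal vertices vanish). Your observation that $\mathcal{Q}^{\Delta}_{G\times H,p}$ is a union of conjugation orbits, so representatives can be chosen inside $[\mathcal{Q}_{G\times H,p}]$, is a small point the paper leaves implicit but is worth making.
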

\begin{proof}
First we will show that we have $F^{G\times H}_{P,s}\in \FFTD(G,H)$ whenever $(P,s)\in [\mathcal{Q}_{G\times H,p}^{\Delta}]$. Let $\varphi\in \widehat{\langle s\rangle}$ and $L\leqslant \Ps$. It suffices to show that $\Ind^G_L k^{\Ps}_{L,\varphi}\in \FFTD(G,H)$. Since $P$ acts trivially on $\Inf^{\Ps}_{\langle s\rangle} k_{\varphi}$, the subgroup $P$ is contained in a vertex of $k_{\varphi}$ considered as a $k\Ps$-module. But since $P$ is the Sylow $p$-subgroup of $\Ps$, it follows that $P$ is the vertex of $k_{\varphi}$. Therefore the module $k^{\Ps}_{L,\varphi}=\Res^{\Ps}_L \Inf^{\Ps}_{\langle s\rangle} k_{\varphi}$ has a vertex contained in $L\cap {}^{x}P\leqslant P$ for some $x\in \Ps$. Since a subgroup of twisted diagonal subgroup is again twisted diagonal, this means that $k^{\Ps}_{L,\varphi}$ has twisted diagonal vertices. This shows that $\Ind^G_L k^{\Ps}_{L,\varphi}\in \FFTD(G,H)$ as desired. Now since the $\FF$-dimension of $\FFTD(G,H)$ is equal to the cardinality of $[\mathcal{P}_{G\times H,p}^{\Delta}]$, which is equal to the cardinality of $ [\mathcal{Q}_{G\times H,p}^{\Delta}]$, it follows that the set $\{F^{G\times H}_{P,s} : (P,s) \in [\mathcal{Q}_{G\times H,p}^{\Delta}]\}$ of primitive idempotents form an $\FF$-basis for $\FFTD(G,H)$. 
\end{proof}

Let $G,H$ and $L$ be finite groups. If $X$ is a $(kG,kH)$-bimodule and $Y$ is a $(kH,kL)$-bimodule, then $X\circ Y:=X\otimes_{kH}Y$ is a $(kG,kL)$-bimodule. Extending this product by $\FF$-bilinearity, we get a map 
\[
\FFT(G,H)\circ \FFT(H,L)\to \FFT(G,L).
\]
Note that this induces a map 
\[
\FFTD(G,H)\circ \FFTD(H,L)\to \FFTD(G,L)
\]
which is used to define the composition of morphisms in the following category.

\begin{definition}
Let $\FF pp_k^\Delta$ be the category with
\begin{itemize}
\item objects: finite groups
\item $\mathrm{Mor}_{\FF pp_k^\Delta}(G,H) = \FF\otimes_{\ZZ}T^{\Delta}(H,G)=\FFT^{\Delta}(H,G)$.
\end{itemize} 
\end{definition}
An $\FF$-linear functor from $\FF pp_k^\Delta$ to $\FF$-$\mathrm{Mod}$ is called a \textit{diagonal $p$-permutation functor}. Diagonal $p$-permutation functors form an abelian category $\mathcal{F}_{pp_k}^\Delta$.
\section{The Essential Algebra}
For a finite group $G$, the quotient algebra
\begin{align*}
\mathcal{E}^{\Delta}(G):=\FFT^{\Delta}(G,G)/ \Big(\sum_{|H|<|G|} \FFT^{\Delta}(G,H)\circ \FFT^{\Delta}(H,G)\Big)
\end{align*}
is called the \textit{essential algebra} of $G$. 

By \cite[Proposition 4.1.2 and Theorem 4.1.12]{MD} the algebra 
\[
\mathcal{E}(G):=\FFT(G,G)/ \Big(\sum_{|H|<|G|} \FFT(G,H)\circ \FFT(H,G)\Big)
\]
is non-zero if and only if there exists a pair $(P,s)$ in $G$ such that $G=\Ps$ and $C_{\langle s\rangle}(P)=1$. In that case, we also have an algebra isomorphism
\[
\mathcal{E}(G)\cong \big(\FF[X]/\Phi_n[X]\big) \rtimes \Out(G)
\]
 where $n$ is the order of $s$ \cite[Theorem 4.1.12]{MD}.

Note that the inclusion map $\FFTD(G,G)\hookrightarrow \FFT(G,G)$ induces a map 
\[
\Theta:\mathcal{E}^{\Delta}(G)\to \mathcal{E}(G).
\] 
We will show that this map is an algebra isomorphism. 

Let $\varphi\in \Aut(G)$ be an automorphism and $\lambda: G/O_p(G)\to k^\times$ be a character, where $O_p(G)$ denotes the largest normal $p$-subgroup of $G$. We define a $(kG, kG)$-bimodule structure on $kG$, denoted by $kG_{\varphi,\lambda}$, via
\[
a\cdot g\cdot b:= \lambda(b)ag\varphi(b)
\]
for $a,b,g\in G$.

Let $\langle Rt\rangle$ be a twisted diagonal subgroup of $G\times G$ with $p_1(\langle Rt\rangle)=G$ and $p_2(\langle Rt\rangle)=G$. Let also $\eta:p_1(\langle Rt\rangle)\to p_2(\langle Rt\rangle)$ be the canonical isomorphism. Then by \cite[Section 4.1.2]{MD}  we have an isomorphism
\[
\Ind^{G\times G}_{\langle Rt\rangle} k^{\langle Rt\rangle}_{\langle Rt\rangle, \varphi}\cong kG_{\eta^{-1},\varphi^{-1}}
\]
 of $(kG,kG)$-bimodules. Again by \cite[Section 4.1.2]{MD} the algebra $\mathcal{E}(G)$ is generated by the images of $kG_{\varphi,\lambda}$.

\begin{prop}
If the essential algebra $\mathcal{E}^{\Delta}(G)$ of a finite group $G$ is non-zero, then there exists a pair $(P,s)$ in $G$ such that $G=\Ps$ and $C_{\langle s\rangle}(P)=1$.
\end{prop}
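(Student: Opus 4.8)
The plan is to transfer the non-vanishing of $\mathcal{E}^\Delta(G)$ to the non-vanishing of the essential algebra $\mathcal{E}(G)$ studied by Ducellier, and then invoke \cite[Proposition 4.1.2]{MD}. Concretely, I would work with the algebra map $\Theta:\mathcal{E}^\Delta(G)\to\mathcal{E}(G)$ induced by the inclusion $\FFTD(G,G)\hookrightarrow\FFT(G,G)$. The key observation is that $\Theta$ is \emph{surjective}: by the discussion preceding the proposition, $\mathcal{E}(G)$ is generated by the images of the bimodules $kG_{\varphi,\lambda}$, and each such bimodule is isomorphic (as recalled just above) to $\Ind^{G\times G}_{\langle Rt\rangle}k^{\langle Rt\rangle}_{\langle Rt\rangle,\varphi}$ for a twisted diagonal subgroup $\langle Rt\rangle$ with full projections. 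Since an induction from a twisted diagonal subgroup lies in $\FFTD(G,G)$ (every indecomposable summand has a vertex contained in a conjugate of $\langle Rt\rangle$, hence twisted diagonal — exactly the argument already used in the Lemma proving $F^{G\times H}_{P,s}\in\FFTD(G,H)$), the generators of $\mathcal{E}(G)$ are all in the image of $\Theta$. Hence $\Theta$ is surjective.

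From surjectivity the proposition follows immediately: if $\mathcal{E}^\Delta(G)\neq 0$, then $\mathcal{E}(G)=\Theta(\mathcal{E}^\Delta(G))$ could in principle still be zero, so I actually need the contrapositive to run the other way. Let me instead argue directly: suppose $\mathcal{E}^\Delta(G)\neq 0$. I claim $\mathcal{E}(G)\neq 0$ as well. This does \emph{not} follow from surjectivity alone, so the cleaner route is to show that $\Theta$ is in fact \emph{injective} too — equivalently, that the intersection $\FFTD(G,G)\cap\big(\sum_{|H|<|G|}\FFT(G,H)\circ\FFT(H,G)\big)$ equals $\sum_{|H|<|G|}\FFTD(G,H)\circ\FFTD(H,G)$. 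Granting this, $\Theta$ is an isomorphism (which is what the paper wants to prove next anyway), and then $\mathcal{E}^\Delta(G)\neq 0 \iff \mathcal{E}(G)\neq 0$, and \cite[Proposition 4.1.2]{MD} gives the existence of the pair $(P,s)$ with $G=\Ps$ and $C_{\langle s\rangle}(P)=1$.

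For the injectivity of $\Theta$, the strategy is to use the idempotent basis. The algebra $\FFT(G,G)$ is split semisimple with primitive idempotents $F^{G\times G}_{P,s}$ indexed by $[\mathcal{Q}_{G\times G,p}]$, and by the Lemma the subalgebra $\FFTD(G,G)$ is spanned by those $F^{G\times G}_{P,s}$ with $(P,s)\in[\mathcal{Q}^\Delta_{G\times G,p}]$. The two-sided ideal $\sum_{|H|<|G|}\FFT(G,H)\circ\FFT(H,G)$ is spanned by a subset of the full idempotent basis (being a sum of two-sided ideals in a semisimple algebra, closed under the idempotents), and similarly $\sum_{|H|<|G|}\FFTD(G,H)\circ\FFTD(H,G)$ is spanned by a subset of the twisted-diagonal idempotents. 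The point to check is that an idempotent $F^{G\times G}_{P,s}$ with $(P,s)$ twisted diagonal lies in the small ideal of $\FFT(G,G)$ if and only if it lies in the small ideal of $\FFTD(G,G)$; the ``if'' direction is trivial, and the ``only if'' direction uses that if $F^{G\times G}_{P,s}$ factors through a smaller group $H$ then it factors through $H'$ of order $\leqslant|G|$ with $H'$ itself of the form $\langle Ps\rangle$ (so one can arrange the intermediate group to again contribute twisted diagonal bimodules), exactly the kind of bookkeeping Ducellier does.

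The main obstacle I expect is precisely this last compatibility of the two ``small'' ideals — i.e., proving $\Theta$ is injective, not merely surjective. Surjectivity is essentially immediate from the identification of generators of $\mathcal{E}(G)$ with inductions from twisted diagonal subgroups. The delicate part is controlling the denominator: one must verify that a morphism in $\FFT(G,G)$ that factors through a group of smaller order, but happens to be twisted diagonal, already factors through a group of smaller order \emph{within the twisted diagonal category}. I would handle this by decomposing such a morphism along the idempotent basis of the semisimple algebras involved, reducing to a statement about which idempotents $F^{G\times G}_{P,s}$ appear, and then matching this against Ducellier's description of the essential algebra via the formula $\mathcal{E}(G)\cong(\FF[X]/\Phi_n[X])\rtimes\Out(G)$. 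Alternatively, if a direct argument proves too cumbersome, one can sidestep injectivity for this proposition alone: it suffices to exhibit a \emph{nonzero} algebra map $\mathcal{E}(G)\to\mathcal{E}^\Delta(G)$ or to note that $\mathcal{E}^\Delta(G)\neq 0$ forces some $F^{G\times G}_{P,s}$ with $(P,s)$ twisted diagonal to survive in the quotient, and then the same idempotent survives a fortiori in $\mathcal{E}(G)$ — giving $\mathcal{E}(G)\neq 0$ and hence the desired pair via \cite[Proposition 4.1.2]{MD}. This weaker route avoids the hard compatibility and still yields the proposition.
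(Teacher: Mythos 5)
Your reduction to Ducellier's criterion hinges entirely on the implication $\mathcal{E}^{\Delta}(G)\neq 0\Rightarrow\mathcal{E}(G)\neq 0$, and none of the routes you propose actually delivers it. Surjectivity of $\Theta$ you rightly discard. The ``weaker route'' at the end is backwards: the ideal defining $\mathcal{E}(G)$ is the \emph{larger} one, $\sum_{|H|<|G|}\FFT(G,H)\circ\FFT(H,G)$, which contains $\sum_{|H|<|G|}\FFTD(G,H)\circ\FFTD(H,G)$, so an idempotent $F^{G\times G}_{Q,t}$ that survives in $\mathcal{E}^{\Delta}(G)$ may a priori still die in $\mathcal{E}(G)$; it is survival in $\mathcal{E}(G)$ that implies survival in $\mathcal{E}^{\Delta}(G)$, not the other way around, and your ``a fortiori'' is exactly the unproved injectivity in disguise. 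As for the injectivity route itself, it is essentially circular: for a group $G$ with $\mathcal{E}(G)=0$, injectivity of $\Theta$ \emph{is} the contrapositive of the proposition, and the paper only proves injectivity \emph{after} this proposition, under the hypotheses $G=\Ps$ and $C_{\langle s\rangle}(P)=1$, because the description of $\mathcal{E}(G)$ by the bimodules $kG_{\varphi,\lambda}$ and of the products $U_H\otimes_{kH}V_H$ used there requires that structure. Your sketch of injectivity also conflates the two ring structures on $\FFT(G,G)$: the idempotents $F^{G\times G}_{Q,t}$ are primitive for the commutative product induced by $\otimes_k$, while the ``small'' ideal is an ideal for the composition product $\circ=\otimes_{kG}$, so it is not automatic that this ideal is spanned by a subset of those idempotents, and the key claim that a twisted diagonal morphism factoring through smaller groups in $\FF pp_k$ already factors through smaller groups in $\FF pp_k^{\Delta}$ is asserted (``the kind of bookkeeping Ducellier does'') but never proved.

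The paper avoids this by arguing directly inside the diagonal category. Via the idempotent formula and \cite[Lemma 2.5.9]{MD}, each summand $\Ind^{G\times G}_{\langle Lt\rangle}k^{\Qt}_{L,\varphi}$ of $F^{G\times G}_{Q,t}$ is rewritten as $kG\otimes_{p_1(\langle Lt\rangle)}\Ind^{p_1(\langle Lt\rangle)\times p_2(\langle Lt\rangle)}_{\langle Lt\rangle}(k^{\Qt}_{L,\varphi})\otimes_{p_2(\langle Lt\rangle)}kG$, and all three factors are shown to have twisted diagonal vertices, so the summand factors through $p_1(\langle Lt\rangle)$ \emph{within} $\FF pp_k^{\Delta}$; hence a nonzero image in $\mathcal{E}^{\Delta}(G)$ forces some pair $(Q,t)$ with $p_1(\Qt)=p_2(\Qt)=G$, giving $G=\Ps$. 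Then, with $\sur{G}=G/C_{\langle s\rangle}(P)$, the identity bimodule $kG$ is decomposed (following the proof of \cite[Proposition 4.1.2]{MD}) into products of bimodules lying in $\FFTD(G,\sur{G})$ and $\FFTD(\sur{G},G)$, so $\mathcal{E}^{\Delta}(G)\neq 0$ forces $\sur{G}=G$, i.e.\ $C_{\langle s\rangle}(P)=1$. To repair your proposal you would need either this direct factorization argument or an honest proof of the compatibility of the two small ideals, which you have not supplied.
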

\begin{proof}
Let $(Q,t)$ be a pair contained in $G\times G$ such that $Q$ is a twisted diagonal subgroup and recall the idempotent formula
\[
F^{G\times G}_{Q,t} = \frac{1}{|C_{N_{G\times G(Q)}(t)}|}\sum_{\substack{\varphi\in \widehat{\langle t\rangle} \\ L\leqslant Q \\ L^t=L}} \tilde{\varphi}(t^{-1})|C_L(t)|\mu((L,Q)^t)\Ind^{G\times G}_{\langle Lt\rangle} k^{\Qt}_{L,\varphi}.
\]
By \cite[Lemma 2.5.9]{MD} we have an isomorphism 
\begin{align*}
\Ind^{G\times G}_{\langle Lt\rangle} k^{\Qt}_{L,\varphi}  & \cong \Ind^G_{p_1(\langle Lt\rangle)} \otimes_{p_1(\langle Lt\rangle)} \Ind^{p_1(\langle Lt\rangle)\times p_2(\langle Lt\rangle)}_{\langle Lt
\rangle} (k^{\Qt}_{L,\varphi})\otimes_{p_2(\langle Lt\rangle)}\Res^G_{p_2(\langle Lt\rangle)}\\&
\cong kG \otimes_{p_1(\langle Lt\rangle)} \Ind^{p_1(\langle Lt\rangle)\times p_2(\langle Lt\rangle)}_{\langle Lt
\rangle} (k^{\Qt}_{L,\varphi})\otimes_{p_2(\langle Lt\rangle)} kG
\end{align*}
of $(kG,kG)$-bimodules. As $(kG, kG)$-bimodule, we have the isomorphism $kG\cong \Ind^{G\times G}_{\Delta G} k$. Thus as $(kG,kp_1(\langle Lt\rangle))$-bimodule we have,
\[
\Res^{G\times G}_{G\times p_1(\langle Lt\rangle)} kG\cong \Res^{G\times G}_{G\times p_1(\langle Lt\rangle)} \Ind^{G\times G}_{\Delta G} k\cong \Ind^{G\times p_1(\langle Lt\rangle)}_{\Delta(p_1(\langle Lt\rangle))}\Res^{\Delta(G)}_{\Delta(p_1(\langle Lt\rangle))} k.
\]
Therefore as $(kG\times p_1(\langle Lt\rangle))$-module, the indecomposable direct summands of $kG$ have vertices contained in $\Delta(p_1(\langle Lt\rangle))$. Similary, one can show that the indecomposable direct summands of $kG$ as $k(p_2(\langle Lt\rangle)\times G)$-module, have vertices contained in $\Delta(p_2(\langle Lt\rangle))$. We also know that the module $k^{\Qt}_{L,\varphi}$, and hence the indecomposable direct summands of $\Ind^{p_1(\langle Lt\rangle)\times p_2(\langle Lt\rangle)}_{\langle Lt\rangle} (k^{\Qt}_{L,\varphi})$, have twisted diagonal vertices. Now suppose $\mathcal{E}^{\Delta}(G)$ is non-zero. Then there is an idempotent $F^{G\times G}_{Q,t}$ whose image in $\mathcal{E}^{\Delta}(G)$ is non-zero. Therefore the argument above shows that there is a pair $(Q,t)$ in $G\times G$ such that $p_1(\Qt)=G$ and $p_2(\Qt)=G$. This implies that  there is a $p$-subgroup $P$ of $G$ and a $p'$-element $s$ of $G$ that normalises $P$ such that $G=\Ps$. Now we will show that in that case we have $C_{\langle s\rangle}(P)=1$.\\
Let $\sur{G}:=G/C_{\langle s\rangle}(P)$, $Q:=\{(u,\sur{u}: u\in P\}\leqslant G\times \sur{G}$ and $Q':=\{(\sur{u},u):u\in P\}\leqslant \sur{G}\times G$. Then by \cite[Proof of Proposition 4.1.2]{MD} we have an isomorphism between $kG$ and
\[
\bigoplus_{i} \Indinf^{G\times \sur{G}}_{\sur{N}_{G\times \sur{G}}(Q)}\big(k C_G(P)/C_{\langle s\rangle}(P)\otimes_k k_{\alpha_i}\big)\otimes_{k\sur{G}} \Indinf^{\sur{G}\times G}_{\sur{N}_{\sur{G}\times G}(Q')}\big(k C_G(P)/C_{\langle s\rangle}(P)\otimes_k k_{\alpha'_i}\big)
\]
as $(kG,kG)$-bimodules, where $\Indinf^{G\times \sur{G}}_{\sur{N}_{G\times \sur{G}}(Q)}=\Ind_{N_{G\times\sur{G}}(Q)}^{G\times\sur{G}}\circ\Inf_{\sur{N}_{G\times \sur{G}}(Q)}^{N_{G\times \sur{G}}(Q)}$. Here $\alpha_i$ and $\alpha'_i$ run over the irreducible characters of $\langle s\rangle$. Again by \cite[Proof of Proposition~4.1.2]{MD} for each $i$, the modules $k C_G(P)/C_{\langle s\rangle}(P)\otimes_k k_{\alpha_i}$ and $k C_G(P)/C_{\langle s\rangle}(P)\otimes_k k_{\alpha'_i}$ are projective indecomposable $k\sur{N}_{G\times \sur{G}}(Q)$-modules and $k\sur{N}_{\sur{G}\times G}(Q')$-modules respectively. Now since $k C_G(P)/C_{\langle s\rangle}(P)\otimes_k k_{\alpha_i}$ is projective indecomposable, it has the trivial group as vertex. So $\Inf^{N_{G\times \sur{G}}(Q)}_{\sur{N}_{G\times \sur{G}}(Q)}\big(k C_G(P)/C_{\langle s\rangle}(P)\otimes_k k_{\alpha_i}\big)$ has the group $Q$ as a vertex. Note that the group $Q$ is twisted diagonal. Therefore indecomposable direct summands of $\Indinf^{G\times \sur{G}}_{\sur{N}_{G\times \sur{G}}(Q)}\big(k C_G(P)/C_{\langle s\rangle}(P)\otimes_k k_{\alpha_i}\big)$ have twisted diagonal vertices, i.e. $\Indinf^{G\times \sur{G}}_{\sur{N}_{G\times \sur{G}}(Q)}\big(k C_G(P)/C_{\langle s\rangle}(P)\otimes_k k_{\alpha_i}\big)\in \FFTD(G,\sur{G})$. Similarly, we have $\Indinf^{\sur{G}\times G}_{\sur{N}_{\sur{G}\times G}(Q')}\big(k C_G(P)/C_{\langle s\rangle}(P)\otimes_k k_{\alpha'_i}\big)\in \FFTD(\sur{G},G)$. Now since $\mathcal{E}^{\Delta}(G)\neq 0$, the image of identity element $kG\in \FFTD(G,G)$ in $\mathcal{E}^{\Delta}(G)$ is non-zero. Hence we have $\sur{G}=G$, i.e. $C_{\langle s\rangle}(P)=1$. 
\end{proof}

Suppose we have $G=\Ps$ and $C_{\langle s\rangle}(P)=1$. The essential algebra $\mathcal{E}^{\Delta}(G)$ is generated by the images of the primitive idempotents 
\[
F^{G\times G}_{Q,t} = \frac{1}{\mid C_{N_{G\times G(Q)}(t)}\mid}\sum_{\substack{\varphi\in \widehat{\langle t\rangle} \\ L\leqslant Q \\ L^t=L}} \tilde{\varphi}(t^{-1})|C_L(t)|\mu((L,Q)^t)\Ind^{G\times G}_{\langle Lt\rangle} k^{\Qt}_{L,\varphi}
\]
where $Q$ is a twisted diagonal subgroup of $G\times G$. By \cite[Lemma 2.5.9]{MD}, if the image of $\Ind^{G\times G}_{\langle Lt\rangle} k^{\Qt}_{L,\varphi}$ is non-zero, then we must have that $p_1(\langle Lt\rangle)=G=p_2(\langle Lt\rangle)$. Write $t=(u,v)$. Then $p_1(\langle Lt\rangle)=\langle p_1(L)u\rangle$ and $p_2(\langle Lt\rangle)=\langle p_2(L)v\rangle$. Therefore we have $|u|=|v|=|s|$. Being a subgroup of twisted diagonal subgroup $Q$, the group $L$ itself is also twisted diagonal. Since $k_1(L)=k_2(L)=1$ and $|u|=|v|=|s|$, we have $k_1(\langle Lt\rangle)=k_2(\langle Lt\rangle)=1$. This shows that the subgroup $\langle Lt\rangle$ is twisted diagonal and $p_1(\langle Lt\rangle)=G=p_2(\langle Lt\rangle)$. Since the images of $\Ind^{G\times G}_{\langle Lt\rangle} k^{\Qt}_{L,\varphi}$ in $\mathcal{E}(G)$ with $\langle Lt\rangle$ satisfying these properties, generate the non-zero algebra $\mathcal{E}(G)$, this shows that the algebra $\mathcal{E}^{\Delta}(G)$ is also non-zero and the map $\Theta:\mathcal{E}^{\Delta}(G)\to\mathcal{E}(G)$ is surjective. Thus we have proved the following: 
\begin{prop}
The essential algebra $\mathcal{E}^{\Delta}(G)$ is non-zero if and only if there is a pair $(P,s)$ in $G$ such that $G=\Ps$ and $C_{\langle s\rangle}(P)=1$. Moreover the map $\Theta:\mathcal{E}^{\Delta}(G)\to\mathcal{E}(G)$ is surjective.
\end{prop}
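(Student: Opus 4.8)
The plan is to establish the two implications separately, drawing on the two results proved just above. The forward implication is immediate: if $\mathcal{E}^{\Delta}(G)\neq 0$, then the previous Proposition already produces a pair $(P,s)$ in $G$ with $G=\Ps$ and $C_{\langle s\rangle}(P)=1$, so nothing further is needed there.

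For the converse, together with surjectivity of $\Theta$, I would argue as follows. Assume $G=\Ps$ with $C_{\langle s\rangle}(P)=1$. By \cite[Theorem 4.1.12]{MD} the algebra $\mathcal{E}(G)$ is non-zero and is generated by the images of the bimodules $kG_{\varphi,\lambda}$ with $\varphi\in\Aut(G)$ and $\lambda\colon G/O_p(G)\to k^{\times}$. Using the isomorphism $kG_{\eta^{-1},\varphi^{-1}}\cong\Ind^{G\times G}_{\langle Rt\rangle}k^{\langle Rt\rangle}_{\langle Rt\rangle,\varphi}$ recorded above, where $\langle Rt\rangle$ is twisted diagonal with $p_1(\langle Rt\rangle)=G=p_2(\langle Rt\rangle)$, one sees that $kG_{\varphi,\lambda}$ is induced from $\langle Rt\rangle$, hence has a vertex contained in the twisted diagonal group $\langle Rt\rangle$; since subgroups of twisted diagonal subgroups are twisted diagonal, every indecomposable summand of $kG_{\varphi,\lambda}$ has a twisted diagonal vertex, so $kG_{\varphi,\lambda}\in\FFTD(G,G)$. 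Consequently the generators of $\mathcal{E}(G)$ all lie in the image of $\Theta$, so $\Theta$ is surjective; and since $\mathcal{E}(G)\neq 0$, it follows that $\mathcal{E}^{\Delta}(G)\neq 0$ as well. Combining this with the forward implication gives the stated equivalence.

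Since both halves rest on the two arguments already carried out, there is no substantial new obstacle at this point. The place where genuine care is needed—if one does not wish to invoke \cite{MD} as a black box—is the claim that the classes of the $kG_{\varphi,\lambda}$ (equivalently, of the idempotents $F^{G\times G}_{Q,t}$ with $Q$ twisted diagonal) span $\mathcal{E}(G)$: this requires the M\"obius-function and induction/restriction bookkeeping appearing in the proof of the previous Proposition, together with the observation that a summand $\Ind^{G\times G}_{\langle Lt\rangle}k^{\Qt}_{L,\varphi}$ survives in the essential quotient only when $p_1(\langle Lt\rangle)=G=p_2(\langle Lt\rangle)$, which (since $L\leqslant Q$ is already twisted diagonal and $t$ has components of order $|s|$) forces $\langle Lt\rangle$ itself to be twisted diagonal. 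That compatibility between being "twisted diagonal" and "not factoring through a smaller group" is the real crux, and it is exactly what makes $\Theta$ an isomorphism rather than merely a surjection.
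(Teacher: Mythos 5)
Your proposal is correct and takes essentially the same route as the paper: the forward implication is exactly the preceding Proposition, and for the converse together with surjectivity both arguments reduce to the observation that the generators of $\mathcal{E}(G)$, namely the images of the bimodules $kG_{\varphi,\lambda}\cong\Ind^{G\times G}_{\langle Rt\rangle}k^{\langle Rt\rangle}_{\langle Rt\rangle,\varphi}$ induced from twisted diagonal subgroups with full projections, lie in $\FFTD(G,G)$ and hence in the image of $\Theta$, so that $\mathcal{E}(G)\neq 0$ forces surjectivity and $\mathcal{E}^{\Delta}(G)\neq 0$. The only cosmetic difference is that the paper re-derives this via the idempotent formula for $F^{G\times G}_{Q,t}$ and the summands $\Ind^{G\times G}_{\langle Lt\rangle}k^{\Qt}_{L,\varphi}$ surviving in the essential quotient, whereas you invoke the generation by the $kG_{\varphi,\lambda}$ directly (implicitly using that every such pair $(\varphi,\lambda)$ is realized by a suitable $\langle Rt\rangle$ and character, a minor bookkeeping point at the same level of detail as the paper).
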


Suppose we have $G=\Ps$ for some pair and $C_{\langle s\rangle}(P)=1$. We will show that the map $\Theta:\mathcal{E}^{\Delta}(G)\to\mathcal{E}(G)$ is also injective. \\
Suppose an element $\sum \overline{r_{\varphi,\alpha} kG_{\varphi,\alpha}}\in \mathcal{E}^{\Delta}(G)$ is mapped to zero by $\Theta$.  Then the element $\sum \overline{r_{\varphi,\alpha} kG_{\varphi,\alpha}}$ of $\mathcal{E}(G)$ is zero. Write
\[
\sum r_{\varphi,\alpha} kG_{\varphi,\alpha}=\sum_{|H|<|G|} t_{H,U_H,V_H} U_H\otimes_{kH} V_H
\]
for some $(kG,kH)$-bimodule $U_H$ and $(kH,kG)$-bimodule $V_H$ and some constants $t_{H,U_H,V_H}\in \FF$. Suppose the coefficient $t_{H,U_H,V_H}$ is non-zero for some group $H$. Then as in \cite{MD} we can assume that $H=\langle Rt\rangle$ for some pair $(R,t)$ and that the modules $U_H$ and $V_H$ are indecomposable. By \cite[Section 4.1]{MD} one has
\[
U_H\otimes_{kH} V_H \cong \mathrm{Indinf}_{\sur{N}_{G\times G}(\Delta(P))}^{G\times G} \bigoplus_i \big(kZ(P)\otimes k_{\lambda_i}\big)^{n_i}
\]
where $\lambda_i$ is a character of $\langle s\rangle$ and $n_i\in \NN$. Again by \cite[Section 4.1]{MD} each summand $kZ(P)\otimes k_{\lambda_i}$ is a projective indecomposable $k\sur{N}_{G\times G}(\Delta(P))$-module. This shows that if the the coefficient $t_{H,U_H,V_H}$ is non-zero, then the indecomposable direct summands of the bimodule $U_H\otimes_{kH} V_H$ have twisted diagonal vertices. Therefore the element $\sum \overline{r_{\varphi,\alpha} kG_{\varphi,\alpha}}$ is zero in $\mathcal{E}^{\Delta}(G)$. This proves that the map $\Theta:\mathcal{E}^{\Delta}(G)\to\mathcal{E}(G)$ is injective. We summarise our results as a theorem below.

\begin{theorem}\label{essentialfinal}
The essential algebra $\mathcal{E}^{\Delta}(G)$ is non-zero if and only if there is a pair $(P,s)$ in $G$ such that $G=\Ps$ and $C_{\langle s\rangle}(P)=1$. In that case, the algebra $\mathcal{E}^{\Delta}(G)$ is isomorphic to the algebra $\big(\FF[X]/\Phi_n[X]\big) \rtimes \Out(G)$ where $n$ is the order of $s$.\\
\end{theorem}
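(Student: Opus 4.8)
The plan is to prove the theorem by identifying $\mathcal{E}^{\Delta}(G)$ with Ducellier's essential algebra $\mathcal{E}(G)$ through the natural map $\Theta$ induced by the inclusion $\FFTD(G,G)\hookrightarrow\FFT(G,G)$, and then quoting \cite[Proposition 4.1.2, Theorem 4.1.12]{MD}. The non-vanishing criterion splits into two implications. That $\mathcal{E}^{\Delta}(G)\neq 0$ forces the existence of a pair $(P,s)$ with $G=\Ps$ and $C_{\langle s\rangle}(P)=1$ is precisely the content of the first Proposition above, whose proof controls the projections $p_1,p_2$ of the twisted diagonal subgroups occurring in a nonzero idempotent $F^{G\times G}_{Q,t}$. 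For the converse, assuming such a pair exists, I would feed the idempotent formula for $F^{G\times G}_{Q,t}$ (with $Q$ twisted diagonal) into \cite[Lemma 2.5.9]{MD}: the summands $\Ind^{G\times G}_{\langle Lt\rangle}k^{\Qt}_{L,\varphi}$ with nonzero image in $\mathcal{E}(G)$ are forced to satisfy $p_1(\langle Lt\rangle)=G=p_2(\langle Lt\rangle)$ with $\langle Lt\rangle$ itself twisted diagonal, so these generators of the \emph{nonzero} algebra $\mathcal{E}(G)$ lie in the image of $\Theta$; hence $\Theta$ is surjective and in particular $\mathcal{E}^{\Delta}(G)\neq 0$.

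It then remains to show $\Theta$ is injective, and this is where the real work lies. Suppose a class $\sum\overline{r_{\varphi,\alpha}\,kG_{\varphi,\alpha}}$ is killed by $\Theta$, so that $\sum r_{\varphi,\alpha}\,kG_{\varphi,\alpha}$ equals a sum of composites $U_H\otimes_{kH}V_H$ with $|H|<|G|$. Following \cite{MD}, I would reduce to $H=\langle Rt\rangle$ with $U_H,V_H$ indecomposable, and then invoke the normal-form description from \cite[Section 4.1]{MD},
\[
U_H\otimes_{kH}V_H\cong\Indinf_{\sur{N}_{G\times G}(\Delta(P))}^{G\times G}\bigoplus_i\big(kZ(P)\otimes k_{\lambda_i}\big)^{n_i},
\]
in which each $kZ(P)\otimes k_{\lambda_i}$ is a projective indecomposable $k\sur{N}_{G\times G}(\Delta(P))$-module. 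A projective module has trivial vertex, so after inflation its vertex is $\Delta(P)$, which is twisted diagonal; hence every such composite already lies in $\FFTD(G,G)$, and the class was zero in $\mathcal{E}^{\Delta}(G)$ to begin with. This gives injectivity of $\Theta$.

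Combining the two directions, $\Theta:\mathcal{E}^{\Delta}(G)\to\mathcal{E}(G)$ is an algebra isomorphism, so $\mathcal{E}^{\Delta}(G)\cong\mathcal{E}(G)\cong\big(\FF[X]/\Phi_n[X]\big)\rtimes\Out(G)$ with $n=|s|$ by \cite[Theorem 4.1.12]{MD}. I expect the injectivity step to be the main obstacle: it demands complete control over the vertices of every bimodule that can appear in a factorization through a proper subquotient of $G$, and this is available only through Ducellier's explicit computation of such composites in normal form; without it one cannot exclude contributions with non-diagonal vertices, which would break the comparison.
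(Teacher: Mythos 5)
Your proposal is correct and takes essentially the same route as the paper: the forward implication via the proposition controlling the projections of twisted diagonal subgroups in a nonzero $F^{G\times G}_{Q,t}$, surjectivity of $\Theta$ from the idempotent formula combined with \cite[Lemma 2.5.9]{MD}, injectivity via the normal form of the composites $U_H\otimes_{kH}V_H$ from \cite[Section 4.1]{MD}, and the final identification with $\big(\FF[X]/\Phi_n[X]\big)\rtimes\Out(G)$ by \cite[Theorem 4.1.12]{MD}. The only phrasing to tighten in the injectivity step is the conclusion: what matters is that each composite with nonzero coefficient lies in the ideal $\sum_{|H|<|G|}\FFTD(G,H)\circ\FFTD(H,G)$ (which is what the twisted-diagonal-vertex computation of the $\Indinf$ normal form delivers, exactly as in the paper), not merely that it lies in $\FFTD(G,G)$.
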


\section{$\DD$-pairs}

Let $H\leqslant G$ be a subgroup. The $(kG, kH)$-bimodule $kG$ is denoted by $\Ind_{H}^{G}$ and $(kH, kG)$-bimodule $kG$ is denoted by $\Res_{H}^{G}$. Similarly, if $N\unlhd G$ is a normal subgroup, the $(kG/N, kG)$-bimodule $kG/N$ is denoted by $\Def^{G}_{G/N}$ and $(kG, kG/N)$-bimodule $kG/N$ is denoted by $\Inf^{G}_{G/N}$. This notation is consistent with our previous use of induction, restriction, inflation and deflation symbols, in the sense that for example, if $M$ is a $kH$-module, then the induced module $\Ind_H^GM$ is isomorphic to $\Ind_H^G\otimes_{kH}M$.

We have the following lemma due to \cite{BT} and \cite{MD}.

\begin{lemma}\label{biset action}
\begin{enumerate}[(i)]
\item Let $(P,s)\in \mathcal{Q}_{G,p}$ be a pair and $H\leqslant G$ be a subgroup. Then we have 
\begin{align*}
\Res^{G}_{H} F^{G}_{P,s} =\sum_{Q,t} F^{H}_{Q,t}
\end{align*}
where $(Q,t)$ runs over a set of representatives of $H$-conjugacy classes of $G$-conjugates of $(P,s)$ contained in $H$. 
\item Let $(Q,t)\in \mathcal{Q}_{H,p}$ be a pair and $H\leqslant G$ be a subgroup. Then we have 
\begin{align*}
\Ind_{H}^{G} F^{H}_{Q,t} = |N_{G}(Q,t): N_{H}(Q,t)| F^{G}_{Q,t}.
\end{align*}
\item Let $N\unlhd G$ and $(P,s)\in \mathcal{Q}_{G/N,p}$. Then
\begin{align*}
\Inf_{G/N}^{G} F^{G/N}_{P,s} = \sum_{Q,t} F^{G}_{Q,t}
\end{align*}
where $(Q,t)$ runs over a set of representatives of $G$-conjugacy classes of pairs in $\mathcal{Q}_{G,p}$ such that $QN/N= {}^{\sur{g}}P$ and $\sur{t}={}^{g}s$ for some $\sur{g}\in G/N$. 
\item Let $N\unlhd G$ and $(P,s)\in \mathcal{Q}_{G,p}$. Then 
\begin{align*}
\Def^{G}_{G/N} F^{G}_{P,s} = m_{P,s,N}\cdot F^{G/N}_{Q,t}
\end{align*}
for some pair $(Q,t)\in \mathcal{Q}_{G/N,p}$ and a constant $m_{P,s,N}\in \FF$.\\
If $G=\Ps$ then 
\begin{align*}
\Def^{G}_{G/N} F^{G}_{P,s} = m_{P,s,N} \cdot F^{G/N}_{PN/N, \sur{s}}.
\end{align*}
\end{enumerate}
\end{lemma}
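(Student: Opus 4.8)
The plan is to prove each of the four formulas in Lemma~\ref{biset action} by reducing to the explicit idempotent formula for $F^G_{P,s}$ recalled in Section~2, together with standard Mackey-type manipulations of induction, restriction, inflation and deflation applied to the modules $\Ind^G_L k^{\langle Ps\rangle}_{L,\varphi}$. The underlying principle is that the species $\tau^G_{Q,t}$ separate points of $\FFT(G)$, so it suffices to compute $\tau^G_{Q,t}$ applied to both sides of each identity; since each $F^G_{P,s}$ is the primitive idempotent dual to the species $\tau^G_{P,s}$ (i.e.\ $\tau^G_{Q,t}(F^G_{P,s})=\delta_{(Q,t),(P,s)}$ for $(Q,t),(P,s)\in[\mathcal{Q}_{G,p}]$), the whole computation becomes a bookkeeping exercise about how species transform under the four operations.

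\textbf{Parts (i) and (ii).} For restriction, I would use the fact that $\tau^H_{Q,t}\circ\Res^G_H=\tau^G_{Q,t}$ whenever $(Q,t)$ is viewed inside $G$ via the inclusion $H\leqslant G$; this is because the Brauer quotient of $\Res^G_H M$ at a $p$-subgroup $Q\leqslant H$ is (the restriction to $\bar N_H(Q)$ of) $M[Q]$, and evaluating the Brauer character at $t$ does not change. Hence $\tau^H_{Q,t}(\Res^G_H F^G_{P,s})=\tau^G_{Q,t}(F^G_{P,s})$, which is $1$ precisely when $(Q,t)$ is $G$-conjugate to $(P,s)$ and $0$ otherwise; expanding $\Res^G_H F^G_{P,s}$ in the basis of primitive idempotents $\{F^H_{Q,t}\}$ of $\FFT(H)$ gives exactly the sum over $H$-classes of $G$-conjugates of $(P,s)$ lying in $H$. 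For induction, I would instead use Frobenius reciprocity at the level of species, or directly compute $\tau^G_{R,u}(\Ind^G_H F^H_{Q,t})$ via Mackey: this picks up the $G$-conjugates of $(R,u)$ that lie in $H$, and combined with the orthogonality of the $F^H_{Q,t}$ one sees only $(R,u)$ $G$-conjugate to $(Q,t)$ survives, with multiplicity $|N_G(Q,t):N_H(Q,t)|$ coming from counting how many $H$-classes in $H$ fuse to the single $G$-class of $(Q,t)$. Both (i) and (ii) are in fact already in \cite{BT}, so I would cite them and only sketch the argument.

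\textbf{Parts (iii) and (iv).} Inflation along $N\unlhd G$ interacts with Brauer quotients as follows: for a $kG/N$-module $\bar M$ and a $p$-subgroup $Q\leqslant G$, the Brauer quotient $(\Inf^G_{G/N}\bar M)[Q]$ is zero unless $Q\cap N=1$ (more precisely unless $Q$ maps injectively to $G/N$), in which case it is $\bar M[QN/N]$ restricted appropriately. Translating to species, $\tau^G_{Q,t}(\Inf^G_{G/N} F^{G/N}_{P,s})$ is $1$ exactly when the image pair $(QN/N,\bar t)$ is $G/N$-conjugate to $(P,s)$ and $Q\cap N = 1$... but one must be careful: the relevant condition turns out to be $QN/N={}^{\bar g}P$ and $\bar t = {}^{g} s$ for some $\bar g$, with no constraint forcing $Q\cap N=1$ separately because a pair $(Q,t)\in\mathcal{Q}_{G,p}$ with $QN/N$ of the right form automatically contributes; expanding $\Inf^G_{G/N}F^{G/N}_{P,s}$ in the $\{F^G_{Q,t}\}$ basis then yields the stated sum. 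For deflation, I would dualize: $\tau^{G/N}_{Q,t}\circ\Def^G_{G/N}$ equals a \emph{scalar multiple} of $\tau^G_{P,s}$ for a single pair $(P,s)$ (the one with $PN/N$ $G/N$-conjugate to $Q$ and $\bar s$ the image of $t$), the scalar $m_{P,s,N}$ being forced by the fact that $\Def$ and $\Inf$ are not inverse to each other. Since the $F^G_{P,s}$ are orthogonal idempotents and $\Def^G_{G/N}$ is a ring-ish... actually only a module map, so $\Def^G_{G/N}F^G_{P,s}$ is a linear combination of the $F^{G/N}_{Q,t}$; computing $\tau^{G/N}_{R,u}$ of it shows only one $(R,u)$ survives, giving the single-term formula $m_{P,s,N}F^{G/N}_{Q,t}$. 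In the special case $G=\langle Ps\rangle$, the only pair of $G$ with full projection onto the $p$-part is $(P,s)$ itself up to conjugacy, and deflating $P\rtimes\langle s\rangle$ by $N$ gives $PN/N\rtimes\langle\bar s\rangle$, so $(Q,t)=(PN/N,\bar s)$ with no conjugation ambiguity.

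\textbf{Main obstacle.} The genuinely delicate point is part (iv): showing that deflation of a single primitive idempotent lands in the line spanned by a single primitive idempotent of $\FFT(G/N)$, rather than a general combination, and pinning down which one. The naive species computation shows that $\tau^{G/N}_{R,u}(\Def^G_{G/N}F^G_{P,s})$ can only be nonzero for $(R,u)$ such that $(RN'/N',u)$... that lifts compatibly to $(P,s)$, but verifying this is exactly one conjugacy class and that the Brauer-quotient-at-$t$ computation for a deflated module behaves as claimed requires the careful analysis of fixed points and relative traces under the quotient map $kG\to kG/N$ used as a bimodule. I expect to lean on the corresponding statements and computations in \cite{MD} (and \cite{BT}) for the deflation formula and the identification of $m_{P,s,N}$, and to present (iii)–(iv) by citing those sources and indicating how the species/orthogonality argument assembles them, rather than redoing the fixed-point bookkeeping from scratch.
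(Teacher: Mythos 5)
The paper's own proof of this lemma is a pure citation: parts (i) and (ii) are \cite[Propositions 3.1 and 3.2]{BT}, and parts (iii) and (iv) are \cite[Proposition 3.1.3, Lemma 3.1.4 and Proposition 3.1.5]{MD}. Since for the delicate points you ultimately propose to lean on exactly these sources, your operative proof coincides with the paper's; what you add is a sketch of how the species transform under the four operations. For (i) that sketch is sound ($\tau^H_{Q,t}\circ\Res^G_H=\tau^G_{Q,t}$, then expand in the idempotent basis of $\FFT(H)$), and for (iii) the correct identity is $\tau^G_{Q,t}\circ\Inf^G_{G/N}=\tau^{G/N}_{QN/N,\bar t}$ with no condition $Q\cap N=1$ at all: one has $(\Inf^G_{G/N}\bar M)[Q]\cong\bar M[QN/N]$ for every $p$-subgroup $Q$, because a trace $\mathrm{Tr}^Q_R$ from a proper subgroup either vanishes (when $R\cap N<Q\cap N$, it is a positive $p$-power multiple of a trace) or equals $\mathrm{Tr}^{\bar Q}_{\bar R}$ with $\bar R<\bar Q$. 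Your opening claim that the Brauer quotient vanishes unless $Q\cap N=1$ is therefore wrong, though you retract it in the same sentence.

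The genuine gap is the mechanism you propose for (iv). It is false that $\tau^{G/N}_{Q,t}\circ\Def^G_{G/N}$ is a scalar multiple of a single species of $G$: take $G=C_p$ and $N=G$, so $G/N=1$; on the basis $k, kC_p$ of $\FFT(C_p)$ the functional $\tau^1_{1,1}\circ\Def^G_1$ takes the values $(1,1)$, which equals $\tfrac1p\,\tau^G_{1,1}+\bigl(1-\tfrac1p\bigr)\tau^G_{P,1}$ and is proportional to neither species. The content of (iv) is a statement about the \emph{columns} of the matrix of $\Def^G_{G/N}$ in the idempotent bases (each $\Def^G_{G/N}F^G_{P,s}$ is a multiple of a single $F^{G/N}_{Q,t}$), not about its rows, so the ``only one $(R,u)$ survives'' conclusion cannot be read off from the dual claim; it genuinely requires the fixed-point and relative-trace analysis of \cite{MD}, i.e.\ the citation you (and the paper) fall back on. A smaller inaccuracy: in (ii) the multiplicity $|N_G(Q,t):N_H(Q,t)|$ is not ``the number of $H$-classes fusing into the $G$-class of $(Q,t)$''; it is the number of cosets $xH$ contained in $N_G(Q,t)H$, as comes out of the induction formula for species in \cite{BT}. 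Since you defer to \cite{BT} and \cite{MD} for these parts, the lemma is still covered, but the sketches for (ii) and especially (iv) should not be presented as independent arguments.
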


\begin{proof}
See \cite[Proposition 3.1. and Proposition 3.2.]{BT} for (i) and (ii), \cite[Proposition 3.1.3]{MD} for (iii) and \cite[Lemma 3.1.4 and Proposition 3.1.5]{MD} for (iv).
\end{proof}

\begin{lemma}
Let $N\unlhd G$ be a normal subgroup of $G$.
\begin{enumerate}[(i)]
\item We have $\Def^{G}_{G/N} \in \FFT^{\Delta}(G/N, G)$ if and only if $N$ is a $p'$-group.
\item We have $\Inf^{G}_{G/N} \in \FFT^{\Delta}(G, G/N)$ if and only if $N$ is a $p'$-group.
\end{enumerate}
\end{lemma}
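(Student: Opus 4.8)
The statement is about the $(kG/N,kG)$-bimodule $kG/N$ (which is $\Def^G_{G/N}$) and the $(kG,kG/N)$-bimodule $kG/N$ (which is $\Inf^G_{G/N}$), and asks precisely when their indecomposable summands all have twisted diagonal vertices. Since $\Def^G_{G/N}$ and $\Inf^G_{G/N}$ are $k$-duals of one another as bimodules (up to swapping the two sides), and a vertex of a summand of one corresponds to a vertex of a summand of the other under $(X,Y)\mapsto(Y,X)$, the two parts (i) and (ii) are equivalent; so I would prove (i) and deduce (ii). Thus the whole task reduces to: \emph{the $k(G\times G/N)$-module $kG/N$ has all indecomposable summands with $k_1=k_2=1$ as subgroups of $G\times G/N$ if and only if $N$ is a $p'$-group.}

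\textbf{Key steps.} First I would identify $\Def^G_{G/N}$ with $\Ind^{G\times G/N}_{\Gamma}\, k$, where $\Gamma=\{(g,gN):g\in G\}\leqslant G\times G/N$ is the ``graph'' of the quotient map; indeed $kG/N$ as a $(kG,kG/N)$-bimodule is the permutation module on the cosets of $\Gamma$. So $kG/N$ is a $p$-permutation module and its indecomposable summands have vertices contained (up to conjugacy) in Sylow $p$-subgroups of $\Gamma\cong G$. Next, compute $k_1(\Gamma)$ and $k_2(\Gamma)$: one has $\Gamma\cap\ker(p_2)=\{(g,1):g\in N\}$, so $k_1(\Gamma)=N$, while $\Gamma\cap\ker(p_1)=1$, so $k_2(\Gamma)=1$. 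Hence $\Gamma$ itself is twisted diagonal if and only if $N=1$; but for the vertices of the \emph{summands} we only need the \emph{Sylow $p$-subgroup} of $\Gamma$ to be twisted diagonal, and its image $k_1$ is the Sylow $p$-subgroup of $N=k_1(\Gamma)$ (since $k_1$ of a subgroup of $\Gamma$ is the corresponding subgroup of $N$, via the projection $\Gamma\cong G\twoheadrightarrow\cdots$; more precisely $k_1(S)=S\cap N$ for $S\leqslant\Gamma$ identified with a subgroup of $G$). Therefore a Sylow $p$-subgroup $S$ of $\Gamma$ is twisted diagonal if and only if $S\cap N=1$, i.e. if and only if $N$ is a $p'$-group.

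For the ``only if'' direction I would argue that if $N$ is \emph{not} a $p'$-group, then $O_p(\Gamma\cap p_1^{-1}(\text{Sylow}))$ meets $N$ nontrivially, and by Green's indecomposability / the description of summands of $\Ind^{G\times G/N}_\Gamma k$ there is at least one indecomposable summand whose vertex $V$ is a Sylow $p$-subgroup of $\Gamma$ (the Scott module summand does the job: the Scott module $\mathrm{Sc}(G\times G/N,\Gamma)$ has vertex a Sylow $p$-subgroup of $\Gamma$ and appears in $\Ind^{G\times G/N}_\Gamma k$); that vertex has $k_1(V)=V\cap N\neq 1$, so $V$ is not twisted diagonal. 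This gives $\Def^G_{G/N}\notin\FFT^\Delta(G/N,G)$. Conversely, if $N$ is a $p'$-group, every vertex $V$ of every summand is a $p$-subgroup of (a conjugate of) the Sylow $p$-subgroup of $\Gamma$, so $V\cap N=1$ hence $k_1(V)=1$, and $k_2(V)\leqslant k_2(\Gamma)=1$, so every such $V$ is twisted diagonal and $\Def^G_{G/N}\in\FFT^\Delta(G/N,G)$.

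\textbf{Main obstacle.} The only delicate point is the ``only if'' direction: one must actually exhibit an indecomposable summand of $kG/N$ with a non-twisted-diagonal vertex, rather than merely observing that $\Gamma$ is not twisted diagonal (a module can have all diagonal-vertex summands even when the ambient stabiliser is not diagonal). Invoking the Scott module (or equivalently the trivial-source-module vertex theory: $\Ind^{G\times G/N}_\Gamma k$ always has a summand with vertex equal to a Sylow $p$-subgroup of $\Gamma$, since $k$ is a trivial source $k\Gamma$-module with vertex a Sylow of $\Gamma$ and induction preserves a vertex up to the usual conjugacy constraints) closes this gap cleanly. Everything else is the routine computation of $k_1,k_2$ for the graph subgroup $\Gamma$ and the observation that $k_1,k_2$ are monotone under passing to subgroups.
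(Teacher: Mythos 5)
Your proof is correct and follows essentially the same route as the paper: both identify $kG/N$ as the transitive permutation module on the graph subgroup (your $\Gamma$, the paper's point stabiliser $H=\{(aN,b):ab^{-1}\in N\}$), compute $k_1$ and $k_2$ of its Sylow $p$-subgroup, and note that some indecomposable summand attains this Sylow as a vertex. Your explicit appeal to the Scott module just makes rigorous what the paper phrases as ``a maximal vertex is a maximal $p$-subgroup having a fixed point on $G/N$,'' and your duality argument for (ii) replaces the paper's ``similar.''
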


\begin{proof}
(i) Let $Q\leqslant (G/N)\times G$ be a maximal vertex of an indecomposable direct summand of the $(kG/N, kG)$-bimodule $kG/N$. Equivalently $Q$ is a maximal $p$-subgroup having a fixed point on the set $G/N$. Suppose $(aN, b)\in Q$ stabilises a basis element $gN$ of $kG/N$. Then we have $(aN) gN b^{-1} = gN$ which implies that $a^{g}\cdot b^{-1}\in N$. Since the vertices of an indecomposable module are conjugate, we may assume that $g=1$. Thus, up to conjugacy, $Q$ is a Sylow $p$-subgroup of 
\begin{align*}
H=\{(aN,b) : ab^{-1}\in N\}\leqslant (G/N) \times G.
\end{align*}
Note that $k_1(Q)=k_1(H)=1$ and $k_2(Q)$ is a Sylow $p$-subgroup of $N$. Hence $Q$ is twisted diagonal if and only if $N$ is a $p'$-group. The result follows.\\
(ii) Similar. 
\end{proof}

Let $(P,s)$ be a pair and suppose $G=\langle Ps \rangle$. Then by \cite[Corollary 3.1.9]{MD} for any normal subgroup $N$ of $G$, we have the following formula for the constant $m_{P,s,N}$:
\begin{align*}
m_{P,s,N}=\frac{|s|}{|N \cap \langle s\rangle| |C_G(s)|}\sum_{\substack{Q\leqslant P \\ Q^s=Q \\ \langle Qs\rangle N=G}} |C_Q(s)| \mu\big((Q,P)^s\big).
\end{align*}

\begin{lemma}\label{defconstant}
Let $(P,s)$ be a pair and suppose $G=\Ps$. Then for any normal $p'$-subgroup $N$ of $G$ we have
\[
m_{P,s,N}=\frac{1}{|N|}\cdot 
\]
\end{lemma}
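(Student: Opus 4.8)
The plan is to start from the explicit formula for $m_{P,s,N}$ recalled just before the statement (from \cite[Corollary 3.1.9]{MD}),
\[
m_{P,s,N}=\frac{|s|}{|N \cap \langle s\rangle|\,|C_G(s)|}\sum_{\substack{Q\leqslant P \\ Q^s=Q \\ \langle Qs\rangle N=G}} |C_Q(s)|\, \mu\big((Q,P)^s\big),
\]
and to show that when $N$ is a $p'$-group this expression collapses. The decisive observation is that \emph{every normal $p'$-subgroup $N$ of $G=\Ps$ is contained in $\langle s\rangle$}. Granting this, two simplifications occur simultaneously: first $|N\cap\langle s\rangle|=|N|$; second, since $N\leqslant\langle s\rangle\leqslant\langle Qs\rangle$ for every $s$-stable $Q\leqslant P$, the condition $\langle Qs\rangle N=G$ reduces to $\langle Qs\rangle=G$, which, because $Q\cap\langle s\rangle\leqslant P\cap\langle s\rangle=1$ gives $|\langle Qs\rangle|=|Q|\,|s|$, forces $Q=P$. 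So the sum has the single term $Q=P$, contributing $|C_P(s)|\,\mu\big((P,P)^s\big)=|C_P(s)|$.

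To prove $N\leqslant\langle s\rangle$, I would pass to $G/N$. Since $P$ is a $p$-group and $N$ a $p'$-group, $P\cap N=1$, hence $|PN/N|=|P|$; and $\langle s\rangle N/N\cong\langle s\rangle/(\langle s\rangle\cap N)$ has $p'$-order. A $p$-subgroup and a $p'$-subgroup of $G/N$ meet trivially, so $(PN/N)\cap(\langle s\rangle N/N)=1$; as $G=P\langle s\rangle$ (with $P\trianglelefteq G$) forces $G/N=(PN/N)(\langle s\rangle N/N)$, we get
\[
|G/N|=|P|\cdot\frac{|s|}{|\langle s\rangle\cap N|}.
\]
Comparing with $|G/N|=|P|\,|s|/|N|$ yields $|N\cap\langle s\rangle|=|N|$, i.e. $N\subseteq\langle s\rangle$.

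It then remains to compute $|C_G(s)|$: writing a general element of $G$ as $gs^i$ with $g\in P$, one checks $gs^i\in C_G(s)$ iff $g\in C_P(s)$, so $C_G(s)=C_P(s)\langle s\rangle$, and since $C_P(s)\cap\langle s\rangle\leqslant P\cap\langle s\rangle=1$ this gives $|C_G(s)|=|C_P(s)|\,|s|$. Substituting the three quantities into the displayed formula produces
\[
m_{P,s,N}=\frac{|s|}{|N|\cdot|C_P(s)|\,|s|}\cdot|C_P(s)|=\frac{1}{|N|},
\]
as desired. The only genuine content is the inclusion $N\leqslant\langle s\rangle$, which I expect to be the one spot needing care; once it is established, everything else is a routine manipulation with the semidirect-product structure $G=P\rtimes\langle s\rangle$, so no further obstacle is anticipated.
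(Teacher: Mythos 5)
Your proof is correct and follows essentially the same route as the paper: start from the formula for $m_{P,s,N}$ from \cite[Corollary 3.1.9]{MD}, observe that a normal $p'$-subgroup $N$ of $\Ps$ lies in $\langle s\rangle$ (the paper notes the slightly stronger $N\leqslant C_{\langle s\rangle}(P)$ without proof), conclude that the sum collapses to the single term $Q=P$, and simplify using $|C_G(s)|=|C_P(s)|\,|s|$. Your write-up merely makes explicit the two auxiliary facts the paper leaves to the reader, so there is nothing to correct.
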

\begin{proof}
First observe that since $N$ is a $p'$-group, we have $N\leqslant C_{\langle s\rangle}(P)$. For any subgroup $Q$ of $P$ the condition $\langle Qs\rangle N=\Ps $ implies that $|Q|=|P|$ and hence $Q=P$. Therefore the formula above becomes
\begin{align*}
m_{P,s,N}=\frac{|s| |C_P(s)|}{|N| |C_G(s)|}=\frac{1}{|N|}\cdot
\end{align*}
\end{proof}

\begin{definition}
A pair $(P,s)$ is called $\DD$-pair if $\Def^{\Ps}_{\Ps/N} F^{\Ps}_{P,s} = 0$ for any nontrivial normal $p'$-subgroup $N$ of $\Ps$. 
\end{definition}

\begin{lemma}\label{DDpair}
Let $(P,s)$ be a pair. Then $(P,s)$ is a $\DD$-pair if and only if the group $\Ps$ does not have any nontrivial normal $p'$-subgroup, that is, if and only if $C_{\langle s\rangle}(P)=\nolinebreak1$. 
\end{lemma}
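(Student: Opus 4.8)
The plan is to prove both implications, using the constant formula from Lemma \ref{defconstant} and the deflation formula from Lemma \ref{biset action}(iv) as the main tools.

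\textbf{The ``only if'' direction.} Suppose $(P,s)$ is a $\DD$-pair but $C_{\langle s\rangle}(P)\neq 1$. Since $\langle s\rangle$ is a $p'$-group, $C_{\langle s\rangle}(P)$ is a nontrivial $p'$-subgroup, and because it is central in $\langle s\rangle$ and centralizes $P$, it is normal in $\Ps$. Applying the last clause of Lemma \ref{biset action}(iv) with $N:=C_{\langle s\rangle}(P)$, we get $\Def^{\Ps}_{\Ps/N}F^{\Ps}_{P,s}=m_{P,s,N}\cdot F^{\Ps/N}_{PN/N,\sur{s}}$. By Lemma \ref{defconstant}, $m_{P,s,N}=1/|N|\neq 0$, and $F^{\Ps/N}_{PN/N,\sur{s}}$ is a primitive idempotent of the nonzero algebra $\FFT(\Ps/N)$, hence nonzero. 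So the deflation does not vanish, contradicting the hypothesis that $(P,s)$ is a $\DD$-pair. This shows a $\DD$-pair must satisfy $C_{\langle s\rangle}(P)=1$, equivalently $\Ps$ has no nontrivial normal $p'$-subgroup.

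\textbf{The ``if'' direction.} Conversely, suppose $C_{\langle s\rangle}(P)=1$; I must show $(P,s)$ is a $\DD$-pair. Here I would first observe that if $\Ps$ had a nontrivial normal $p'$-subgroup $N$, then $N\cap\langle s\rangle$-type arguments aside, $N$ would be a $p'$-group normalized by $P$ and by $s$, and one checks (since $P$ is the unique Sylow $p$-subgroup and $N$ is a normal $p'$-subgroup, $PN=P\rtimes N$ with $N$ a normal complement inside $P\rtimes N$) that $N$ embeds into $\langle s\rangle$ as a subgroup centralizing $P$; more precisely the image of $N$ under the projection $\Ps\to\langle s\rangle$ is a nontrivial $p'$-subgroup of $\langle s\rangle$ lying in $C_{\langle s\rangle}(P)$, contradiction. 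Hence under the hypothesis $C_{\langle s\rangle}(P)=1$, the group $\Ps$ has \emph{no} nontrivial normal $p'$-subgroup at all, so the condition defining a $\DD$-pair is vacuously satisfied: there simply are no nontrivial normal $p'$-subgroups $N$ to test. Therefore $(P,s)$ is trivially a $\DD$-pair.

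\textbf{Where the work sits.} The genuinely substantive point is the equivalence ``$C_{\langle s\rangle}(P)=1$ $\iff$ $\Ps$ has no nontrivial normal $p'$-subgroup,'' which reduces the whole lemma to the ``only if'' computation above; the rest is bookkeeping with the idempotent formulas. The one step requiring a little care is verifying that any normal $p'$-subgroup $N$ of $\Ps$ is automatically contained in (a conjugate of, hence by normality equal to a subgroup of) $C_{\langle s\rangle}(P)$. This follows because $P\cap N=1$ (orders are coprime), $PN\unlhd \Ps$ with $PN/P$ a $p'$-group, so $N$ is a Hall $p'$-subgroup of $PN$; as $N\unlhd \Ps$, conjugation by $s$ fixes $N$, and $[N,P]\leqslant N\cap P=1$ forces $N\leqslant C_{\Ps}(P)$, and since $N$ is a $p'$-group with $P\cap N=1$ its image in $\langle s\rangle\cong\Ps/P$ is faithful and lands in $C_{\langle s\rangle}(P)$. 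I expect this normal-complement argument to be the only place where one must slow down; everything else is a direct appeal to Lemmas \ref{biset action} and \ref{defconstant}.
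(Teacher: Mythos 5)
Your overall route is the same as the paper's: Lemma \ref{defconstant} gives $m_{P,s,N}=1/|N|\neq 0$, so by Lemma \ref{biset action}(iv) the deflation $\Def^{\Ps}_{\Ps/N}F^{\Ps}_{P,s}$ is a nonzero multiple of a primitive idempotent for every nontrivial normal $p'$-subgroup $N$ of $\Ps$, and everything reduces to the purely group-theoretic equivalence between ``$\Ps$ has no nontrivial normal $p'$-subgroup'' and ``$C_{\langle s\rangle}(P)=1$''. Your ``only if'' direction is correct: $C_{\langle s\rangle}(P)$ is centralized by both $P$ and $\langle s\rangle$, hence central in $\Ps$, hence a normal $p'$-subgroup, and the nonvanishing deflation gives the contradiction.

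In the ``if'' direction, however, the step you yourself single out as the delicate one is not actually closed by what you wrote. From $[N,P]\leqslant N\cap P=1$ you get $N\leqslant C_{\Ps}(P)$, but it is a non sequitur to conclude that the image of $N$ in $\Ps/P\cong\langle s\rangle$ ``lands in $C_{\langle s\rangle}(P)$'': if $us^{i}\in C_{\Ps}(P)$ with $u\in P$, all this tells you is that $s^{i}$ acts on $P$ as the inner automorphism $c_{u^{-1}}$, not that it acts trivially, and in general the image of $C_{\Ps}(P)$ in $\langle s\rangle$ need not centralize $P$. The claim is true, but it needs one more line: for instance, $N\langle s\rangle$ is a subgroup (as $N\unlhd\Ps$) of order $|N||s|/|N\cap\langle s\rangle|$, which is prime to $p$ and divides $|\Ps|=|P|\,|s|$, hence divides $|s|$; therefore $N\leqslant\langle s\rangle$ and so $N\leqslant\langle s\rangle\cap C_{\Ps}(P)=C_{\langle s\rangle}(P)$. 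Alternatively, note that $c_{s^{i}}|_{P}=c_{u^{-1}}|_{P}$ has order dividing both the $p'$-number $|s^{i}|$ and the $p$-power $|P/Z(P)|$, hence is trivial. With that line added your argument is complete; it is in fact exactly the containment $N\leqslant C_{\langle s\rangle}(P)$ that the paper itself asserts without proof at the start of the proof of Lemma \ref{defconstant}, and on which its two-sentence proof of this lemma silently relies.
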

\begin{proof}
By Lemma \ref{defconstant}, for any normal $p'$-subgroup $N\unlhd \Ps$ we have $m_{P,s,N}=1/|N|$. Therefore $(P,s)$ is a $\DD$-pair if and only if the group $\Ps$ does not have any nontrivial normal $p'$-subgroup. The result follows.  
\end{proof}

\section{The functor $\FFT^{\Delta}$}
By \cite{Boucsimple}, the simple diagonal $p$-permutation functors are parametrized by the pairs $(G,V)$ where $G$ is a finite group and $V$ is a simple $\mathcal{E}^{\Delta}(G)$-module. Note that this implies $\mathcal{E}^{\Delta}(G)\neq 0$. 

For a simple $\mathcal{E}^{\Delta}(G)$-module $V$, we define two functors in $\FF pp_k^\Delta$ by:
\begin{align*}
L_{G,V}(H):= \FF\TD(H,G)\otimes_{\mathcal{E}^{\Delta}(G)}V
\end{align*}
and 
\begin{align*}
J_{G,V}(H):=\left\lbrace \sum_{i}\phi_i \otimes v_i \in L_{G,V} : \forall \psi \in \FFTD(G,H), \sum_i (\psi\circ\phi_i)\cdot v_i = 0 \right\rbrace,
\end{align*}
for any finite group $H$. The action of morphisms in $\FF pp_k^\Delta$ on these evaluations is given by left composition.
The functor $J_{G,V}$ is the unique maximal subfunctor of $L_{G,V}$, so the quotient  
\[
S_{G,V}:= L_{G,V}/J_{G,V}
\]
is a simple functor \cite{Boucsimple}. 

Let $\FFTD: \FF pp_k^\Delta\to \FF$-$\mathrm{Mod}$ be the functor given by 
\begin{itemize}
\item $\FFTD(G):= \FF\otimes_{\ZZ} T(G) = \FFT(G)$,
\item $\FFTD(X): \FFT(G)\to \FFT(H), M\mapsto X\otimes_{kH} M$ for any $X\in \FFTD(H,G)$. 
\end{itemize}
For any $kG$-module $X$, we denote by $\widetilde{X}$ the $(kG, kG)$-bimodule $k(G\times X)$ where the action of $kG$-$kG$ is given by
\[
a\cdot (g,x)\cdot b^{-1}:= (agb, b^{-1}x)
\]
for all $a,b,g\in G$ and $x\in X$. We have an isomorphism of $(kG,kG)$-bimodules
\[
\widetilde{X}\cong \Ind_{\delta(G)}^{G\times G^{op}}\mathrm{Iso}(\delta)(X)	
\]
where $\delta :G\to G\times G^{op}$, $g\mapsto (g,g^{-1})$. See \cite[Definition 2.5.17]{MD}. Note that the image $\delta(G)$ of $G$ in $G\times G^{op}$ is a twisted diagonal subgroup. If $X$ is an indecomposable $p$-permutation $kG$-module with a vertex $Q$, then any vertex of an indecomposable direct summand of $\widetilde{X}$ is contained in $\delta(Q)$, up to conjugation. Therefore for any $X\in \FFT(G)$ we have $\widetilde{X}\in \FFTD(G,G)$.

\begin{lemma}
Let $F$ be a subfunctor of $\FFTD$. Then for any finite group $G$, the $\FF$-vector space $F(G)$ is an ideal of the algebra $\FFTD(G)$ of $p$-permutation modules. 
\end{lemma}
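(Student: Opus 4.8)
The statement to prove is that if $F$ is a subfunctor of $\FFTD$, then $F(G)$ is an ideal of the ring $\FFTD(G) = \FFT(G)$ for every finite group $G$. The key idea is that the ring multiplication on $T(G)$ coming from the tensor product over $k$ can be realized, after applying the functor, as the action of a suitable morphism in $\FF pp_k^\Delta$. Concretely, for a $p$-permutation $kG$-module $X$, the bimodule $\widetilde{X} = k(G\times X)$ has the property that $\widetilde{X}\otimes_{kG} M \cong X\otimes_k M$ as $kG$-modules, where the right-hand side carries the diagonal $G$-action; this is exactly the ring product $X\cdot M$ in $T(G)$. Moreover, by the discussion preceding this lemma, $\widetilde{X}\in \FFTD(G,G) = \mathrm{Mor}_{\FF pp_k^\Delta}(G,G)$, since any vertex of an indecomposable summand of $\widetilde X$ is conjugate into $\delta(Q)$ for a vertex $Q$ of a summand of $X$, and $\delta(Q)$ is twisted diagonal.

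First I would fix $X\in \FFT(G)$ and $M\in F(G)$, and unwind the definitions: since $F$ is a subfunctor, the map $\FFTD(\widetilde{X})\colon \FFTD(G)\to \FFTD(G)$ restricts to a map $F(G)\to F(G)$, so $\FFTD(\widetilde{X})(M) = \widetilde{X}\otimes_{kG} M \in F(G)$. Then I would identify $\widetilde{X}\otimes_{kG} M$ with the product $X\cdot M$ in the ring $\FFT(G)$, using the isomorphism $\widetilde{X}\cong \Ind_{\delta(G)}^{G\times G^{\mathrm{op}}}\mathrm{Iso}(\delta)(X)$ and the standard computation that induction along $\delta$ followed by $-\otimes_{kG} M$ produces $X\otimes_k M$ with the diagonal action. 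This shows $X\cdot M\in F(G)$ for all $X\in\FFT(G)$, $M\in F(G)$; by commutativity of the ring $\FFT(G)$ (the tensor product over $k$ is symmetric), $F(G)$ is a two-sided ideal.

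Since everything is extended $\FF$-linearly, it suffices to check the identity $\widetilde{X}\otimes_{kG}M \cong X\otimes_k M$ on actual $p$-permutation modules $X$ and $M$ before passing to Grothendieck groups, which is the only genuinely computational point. I expect the main obstacle to be precisely the careful verification of this bimodule identification — matching the two $(kG)$-actions through the explicit formula $a\cdot(g,x)\cdot b^{-1} = (agb, b^{-1}x)$ defining $\widetilde X$, and checking that under $\widetilde{X}\otimes_{kG}M$ the element $(g,x)\otimes m$ corresponds to $gx\otimes gm$ in $X\otimes_k M$ — but this is a routine bookkeeping exercise once the map is written down. No deep input is needed beyond the already-established fact that $\widetilde{X}\in\FFTD(G,G)$ and the functoriality of $F$.
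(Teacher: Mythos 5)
Your proposal is correct and follows essentially the same route as the paper: the paper also multiplies $Y\in F(G)$ by a $p$-permutation module $X$ via the bimodule $\widetilde{X}\in\FFTD(G,G)$, invoking the isomorphism $X\otimes_k Y\cong\widetilde{X}\otimes_{kG}Y$ (cited there as \cite[Proposition 2.5.18]{MD} rather than re-verified) and the functoriality of $F$. The only difference is that you propose to check that bimodule identification by hand, which is indeed routine and harmless.
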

\begin{proof}
Let $Y\in F(G)$ and assume $X$ is a $p$-permutation $kG$-module. By \cite[Proposition 2.5.18]{MD} we have an isomorphism $X\otimes_{k} Y\cong \widetilde{X}\otimes_{kG} Y$ of $kG$-modules. Since $\widetilde{X}\in \FFTD(G,G)$ and $F$ is a functor, we have $\widetilde{X}\otimes_{kG} Y\in F(G)$. This shows that $F(G)$ is an ideal of $\FFTD(G)$. 
\end{proof}

\begin{definition}
For any pair $(P,s)$ let $\mathbf{e}_{P,s}$ denote the subfunctor of $\FFTD$ generated by the idempotent $F^{\Ps}_{P,s}\in\FF T^\Delta\big(\langle Ps\rangle\big)$. 
\end{definition}

\begin{prop}\label{sumofsubfunctors}
Let $F$ be a subfunctor of $\FFTD$. Then we have
\[
F=\sum_{\mathbf{e}_{P,s}\leqslant F} \mathbf{e}_{P,s}.
\]
\end{prop}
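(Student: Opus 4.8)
The plan is to prove $F = \sum_{\mathbf{e}_{P,s}\leqslant F}\mathbf{e}_{P,s}$ by showing the two inclusions. The inclusion $\supseteq$ is immediate: each $\mathbf{e}_{P,s}$ appearing in the sum is by hypothesis contained in $F$, so their sum is too. The substance is the reverse inclusion $F\subseteq\sum_{\mathbf{e}_{P,s}\leqslant F}\mathbf{e}_{P,s}$, and for this it suffices to show that for every finite group $G$, every element $x\in F(G)$ lies in $\big(\sum_{\mathbf{e}_{P,s}\leqslant F}\mathbf{e}_{P,s}\big)(G)$.

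Here I would exploit the idempotent decomposition of $\FFT(G)$. Since $\FFT(G)$ is split semisimple with primitive idempotents $F^G_{P,s}$ indexed by $(P,s)\in[\mathcal{Q}_{G,p}]$, any $x\in F(G)$ can be written $x = \sum_{(P,s)} x\cdot F^G_{P,s}$ (product in the ring $\FFT(G)$). By the preceding lemma, $F(G)$ is an ideal of $\FFT(G)$; since the $F^G_{P,s}$ are central (the algebra is commutative), each $x F^G_{P,s}$ again lies in $F(G)$. So it is enough to treat a single summand $xF^G_{P,s}\in F(G)$ and show it lies in $\mathbf{e}_{P,s}(G)$ and that $\mathbf{e}_{P,s}\leqslant F$. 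The second point follows once we know $F^G_{P,s}\in F(G)$ itself generates $\mathbf{e}_{P,s}$ inside $F$: indeed if $F^G_{P,s}\in F(G)$ then the subfunctor it generates, which is exactly $\mathbf{e}_{P,s}$ by definition, is contained in $F$.

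So the crux reduces to: if $F(G)$ contains a nonzero multiple of the primitive idempotent $F^G_{P,s}$, then $F(G)$ contains $F^G_{P,s}$ itself. This is where I expect the main work to lie. Since $xF^G_{P,s}\in F(G)$ and $F(G)$ is an ideal, $\FFT(G)\cdot xF^G_{P,s}\subseteq F(G)$; but $\FFT(G)F^G_{P,s} = \FF F^G_{P,s}$ is one-dimensional (as $F^G_{P,s}$ is a primitive idempotent of a commutative semisimple algebra), so if $xF^G_{P,s}\neq 0$ then $F^G_{P,s}\in F(G)$ and we are done, with that summand landing in $\mathbf{e}_{P,s}(G)$ and $\mathbf{e}_{P,s}\leqslant F$. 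The remaining possibility is $xF^G_{P,s}=0$, which contributes nothing. Assembling: $x = \sum_{(P,s)\,:\,xF^G_{P,s}\neq 0} xF^G_{P,s}$, and for each such $(P,s)$ we have $F^G_{P,s}\in F(G)$, hence $\mathbf{e}_{P,s}\leqslant F$, hence $xF^G_{P,s}\in\FF F^G_{P,s}\subseteq\mathbf{e}_{P,s}(G)\subseteq\big(\sum_{\mathbf{e}_{Q,t}\leqslant F}\mathbf{e}_{Q,t}\big)(G)$. This gives $F(G)\subseteq\big(\sum_{\mathbf{e}_{Q,t}\leqslant F}\mathbf{e}_{Q,t}\big)(G)$ for all $G$, completing the proof. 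The only subtlety to check carefully is that $F^G_{P,s}$, defined a priori in $\FFT(G)$, actually lies in $\FFT^\Delta(\langle Ps\rangle)$ so that the subfunctor $\mathbf{e}_{P,s}$ makes sense and that its evaluation at $G$ recovers $\FF F^G_{P,s}$; this uses the earlier lemma giving $F^{G\times H}_{P,s}\in\FFT^\Delta$ for twisted diagonal pairs together with Lemma \ref{biset action} describing how induction acts on these idempotents.
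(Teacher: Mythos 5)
Your overall route is the paper's: decompose $u\in F(G)$ over the primitive idempotents $F^G_{P,s}$, use that $F(G)$ is an ideal of the commutative split semisimple algebra $\FFT^\Delta(G)$ to conclude $F^G_{P,s}\in F(G)$ whenever the corresponding coefficient is nonzero, and then feed these idempotents into the subfunctors $\beps$. However, one step you dismiss as holding ``by definition'' is false as stated and is exactly where the remaining work lies: you claim that the subfunctor of $\FFTD$ generated by $F^G_{P,s}$ is $\beps$ by definition. By definition, $\beps$ is generated by the idempotent $F^{\Ps}_{P,s}$ sitting in the evaluation at the group $\Ps$, whereas your element $F^G_{P,s}$ lives at an arbitrary overgroup $G$ containing the pair; identifying the two generated subfunctors is a statement about how morphisms of $\FF pp_k^\Delta$ transport these idempotents between $G$ and $\Ps$, and it requires Lemma~\ref{biset action}, not the definition. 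Your closing caveat also mislocates the subtlety: there is no issue about $F^G_{P,s}$ lying in the evaluation $\FFTD(G)=\FFT(G)$ (evaluations carry no diagonal constraint); the issue is the passage between the two groups, and the restriction direction, which is the one needed for $\beps\leqslant F$, is not addressed by your appeal to how induction acts.

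The gap is bridged exactly as in the paper. From $F^G_{P,s}\in F(G)$, apply $\Res^G_{\Ps}$ (a morphism in $\FF pp_k^\Delta$): by Lemma~\ref{biset action}(i) this is a sum of idempotents $F^{\Ps}_{Q,t}$ over representatives of $\Ps$-conjugacy classes of $G$-conjugates of $(P,s)$ contained in $\Ps$, one of which is $(P,s)$ itself; multiplying by $F^{\Ps}_{P,s}$ and using that $F\big(\Ps\big)$ is an ideal gives $F^{\Ps}_{P,s}\in F\big(\Ps\big)$, hence $\beps\leqslant F$. Conversely, by Lemma~\ref{biset action}(ii), $\Ind^G_{\Ps}F^{\Ps}_{P,s}$ is a nonzero scalar multiple of $F^G_{P,s}$, so $F^G_{P,s}\in\beps(G)$, and each nonzero component of $u$ lies in $\big(\sum_{\beqt\leqslant F}\beqt\big)(G)$. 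With these two applications of Lemma~\ref{biset action} inserted in place of the ``by definition'' claim, your argument coincides with the paper's proof.
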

\begin{proof}
Since $F$ is a subfunctor, we have 
\[
\sum_{\mathbf{e}_{P,s}\leqslant F} \mathbf{e}_{P,s} \leqslant F.
\]
Now let $G$ be a finite group, and $u=\sum_{(P,s)}\lambda_{P,s}F_{P,s}^G$, where $(P,s)$ runs in a set of representatives of $G$-conjugacy classes of $\mathcal{Q}_{G,p}$, and $\lambda_{P,s}\in\FF$. Then $F_{P,s}^G\cdot u=\lambda_{P,s}F_{P,s}^G\in F(G)$, since $F(G)$ is an ideal of $\FF T^\Delta(G)$. Hence $F_{P,s}^G\in F(G)$ if $\lambda_{P,s}\neq\nolinebreak 0$. In this case we have $\Res^G_{\Ps} F^G_{P,s}\in F\big(\Ps\big)$, which implies by Lemma~\ref{biset action} that $F^{\Ps}_{P,s}\in F\big(\Ps\big)$. This shows that $\beps\leqslant F$. By Lemma~\ref{biset action} again, $F_{P,s}^G$ is a non zero scalar multiple of $\Ind_{\langle Ps\rangle}^GF^{\Ps}_{P,s}$, so $F_{P,s}^G\in\beps(G)$, which gives finally
\[
u\in\sum_{\beps\leqslant F}\beps(G).
\]
Therefore we have  
\[
F=\sum_{\mathbf{e}_{P,s}\leqslant F} \mathbf{e}_{P,s}
\]
as desired.
\end{proof}

\begin{prop}\label{someofideals}
Let $(P_i,s_i)_{i\in I}$ be a set of pairs for an indexing set $I$. Then for any pair $(Q,t)$ we have $\mathbf{e}_{Q,t}\leqslant \sum_{i\in I} \mathbf{e}_{P_i,s_i}$ if and only if $\mathbf{e}_{Q,t}\leqslant \mathbf{e}_{P_i,s_i}$ for some $i\in I$. 
\end{prop}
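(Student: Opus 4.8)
The plan is to reduce the whole statement to a computation inside the algebra $\FFTD(\Qt)=\FFT(\Qt)$. The implication from right to left is immediate: if $\mathbf{e}_{Q,t}\leqslant\mathbf{e}_{P_i,s_i}$ for some $i\in I$, then transitivity of $\leqslant$ together with $\mathbf{e}_{P_i,s_i}\leqslant\sum_{j\in I}\mathbf{e}_{P_j,s_j}$ gives $\mathbf{e}_{Q,t}\leqslant\sum_{j\in I}\mathbf{e}_{P_j,s_j}$. All the work is in the converse.

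First I would record the following general principle: since $\mathbf{e}_{Q,t}$ is by definition the smallest subfunctor of $\FFTD$ whose evaluation at $\Qt$ contains $F^{\Qt}_{Q,t}$, one has $\mathbf{e}_{Q,t}\leqslant\mathcal{G}$ if and only if $F^{\Qt}_{Q,t}\in\mathcal{G}(\Qt)$, for every subfunctor $\mathcal{G}$ of $\FFTD$. Applying this to $\mathcal{G}=\sum_{i\in I}\mathbf{e}_{P_i,s_i}$, whose evaluation at $\Qt$ is $\sum_{i\in I}\mathbf{e}_{P_i,s_i}(\Qt)$, the hypothesis rewrites as
\[
F^{\Qt}_{Q,t}\in\sum_{i\in I}\mathbf{e}_{P_i,s_i}(\Qt),
\]
and what we must deduce, applying the principle again to $\mathcal{G}=\mathbf{e}_{P_i,s_i}$, is that $F^{\Qt}_{Q,t}$ already belongs to one of the summands $\mathbf{e}_{P_i,s_i}(\Qt)$.

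At this stage the key input is the earlier Lemma: for any subfunctor $F$ of $\FFTD$ the evaluation $F(G)$ is an ideal of $\FFTD(G)$. Hence each $\mathbf{e}_{P_i,s_i}(\Qt)$ is an ideal of $\FFT(\Qt)$, a split semisimple commutative algebra whose primitive idempotents $F^{\Qt}_{R,r}$, for $(R,r)$ in $[\mathcal{Q}_{\Qt,p}]$, form an $\FF$-basis. Any ideal of such an algebra is the span of the primitive idempotents it contains, so $\mathbf{e}_{P_i,s_i}(\Qt)=\bigoplus_{(R,r)\in\mathcal{A}_i}\FF\,F^{\Qt}_{R,r}$ for a suitable subset $\mathcal{A}_i\subseteq[\mathcal{Q}_{\Qt,p}]$, and therefore $\sum_{i\in I}\mathbf{e}_{P_i,s_i}(\Qt)=\bigoplus_{(R,r)\in\bigcup_{i\in I}\mathcal{A}_i}\FF\,F^{\Qt}_{R,r}$. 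Since $F^{\Qt}_{Q,t}$ is itself one of the basis vectors $F^{\Qt}_{R,r}$, membership in this span forces $(Q,t)$ to be $\Qt$-conjugate to a pair in some $\mathcal{A}_i$, i.e. $F^{\Qt}_{Q,t}\in\mathbf{e}_{P_i,s_i}(\Qt)$, as required. I expect the only genuinely delicate step to be the verification of the general principle of the second paragraph (that the smallest subfunctor containing an element $x\in\FFTD(G_0)$ is detected by its $G_0$-evaluation, and that evaluation commutes with sums of subfunctors); everything after that is the elementary structure theory of ideals in a split semisimple commutative algebra, which the Lemma makes applicable.
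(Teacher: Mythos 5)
Your proposal is correct and follows essentially the same route as the paper: evaluate at $\Qt$, use that $F^{\Qt}_{Q,t}$ is a primitive idempotent while each $\mathbf{e}_{P_i,s_i}(\Qt)$ is an ideal of the split semisimple algebra of $p$-permutation modules, conclude $F^{\Qt}_{Q,t}\in\mathbf{e}_{P_i,s_i}(\Qt)$ for some $i$, and then pass back to subfunctors because $\mathbf{e}_{Q,t}$ is generated by $F^{\Qt}_{Q,t}$. The extra detail you supply (ideals are spanned by the primitive idempotents they contain, evaluation commutes with sums of subfunctors) is exactly what the paper leaves implicit.
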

\begin{proof}
If $\mathbf{e}_{Q,t}\leqslant \mathbf{e}_{P_i,s_i}$ for some $i\in I$, then we obviously have $\mathbf{e}_{Q,t}\leqslant \sum_{i\in I} \mathbf{e}_{P_i,s_i}$. Conversely assume we have $\mathbf{e}_{Q,t}\leqslant \sum_{i\in I} \mathbf{e}_{P_i,s_i}$. Then $\mathbf{e}_{Q,t}\big(\Qt\big)\leqslant \sum_{i\in I} \mathbf{e}_{P_i,s_i}\big(\Qt\big)$ and so $F^{\Qt}_{Q,t}\in \sum_{i\in I} \mathbf{e}_{P_i,s_i}\big(\Qt\big)$. Since $F^{\Qt}_{Q,t}$ is a primitive idempotent and since $\mathbf{e}_{P_i,s_i}\big(\Qt\big)$ is an ideal of $\FF T^\Delta\big(\Qt\big)$  it follows that we have $F^{\Qt}_{Q,t}\in  \mathbf{e}_{P_i,s_i}\big(\Qt\big)$ for some $i\in I$ and hence $\beqt\leqslant \mathbf{e}_{P_i,s_i}$.
\end{proof}

Let $G$ be a finite group and $(P,s)\in\mathcal{Q}_{G,p}$ be a pair such that $G=\Ps$. Let also $(Q,t)\in \mathcal{Q}^{\Delta}_{H\times G, p}$ for a finite group $H$. Suppose  that $\eta: p_1(Q)\to p_2(Q)$ is the canonical isomorphism. Up to conjugation in $H\times G$, we can assume $t=(u,s^j)$. By \cite[Section 3.2]{MD} if $p_2(\Qt)\neq G$, then the product $F^{H\times G}_{Q,t}\otimes_{kG} F^G_{P,s}$ is zero. So assume that we have $p_2(\Qt)=G$. This implies that we have $p_2(Q)=P$ and $|s^j|=|s|$. Then since $k_1(Q)=k_2(Q)=1$, this implies that we have $p_1(Q)\cong P$. Since the group $Q$ is $t$-stable, the isomorphism $\eta: p_1(Q)\to P$ commutes with conjugations by $u$ and $s^j$. Now \cite[Equation (3.3), Section 3.2]{MD} implies that as $kH$-module the product $F^{H\times G}_{Q,t}\otimes_{kG} F^G_{P,s}$ is equal to
\begin{align*}
\frac{1}{|C_{N_{H\times G}}(Q)(t)||C_G(s)|}\sum_{\substack{\varphi\in\sur{\langle t\rangle}\\ \psi\in\sur{\langle s\rangle}\\ \varphi^{|u|}\psi^{j|u|}=1}} \tilde{\varphi}(t)^{-1}\tilde{\psi}(s)^{-1} |C_Q(t)| \sum_{\substack{J\leqslant p_1(Q)\\ J^u=J}} \sigma(J)\Ind^H_{\langle Ju\rangle}(k^{\langle p_1(Q)u\rangle}_{\langle Ju\rangle, \phi}) 
\end{align*}
where $\sigma(J):= \sum_{\substack{L\leqslant P \\ L^s=L\\ \eta(J)=L}}|C_L(s)|\mu\big((L,P)^s\big)$ and $\phi(u):=\varphi(u,s^j)\psi(s)^j$.

Suppose we have $H=\langle P's'\rangle$ for a pair $(P',s')$. Then by \cite[Lemma 2.7.6]{MD} if $\tau^H_{P',s'}(F^{H\times G}_{Q,t}\otimes_{kG} F^G_{P,s})\neq 0$, then we must have $p_1(Q)=P'$ and $|u|=|s'|$. This implies in particular that we must have $P'\cong P$. Moreover again by \cite[Lemma 2.7.6]{MD} we have $\tau^H_{P',s'}\big(\Ind^H_{\langle Ju\rangle}(k^{\langle p_1(Q)u\rangle}_{\langle Ju\rangle, \phi})\big)=0$ if $J\neq P'$. Therefore if we have $P'\cong P$ then $\tau^H_{P',s'}(F^{H\times G}_{Q,t}\otimes_{kG} F^G_{P,s})$ is equal to
\[
\frac{1}{|C_{N_{H\times G}}(Q)(t)||C_G(s)|}\sum_{\substack{\varphi\in\sur{\langle t\rangle}\\ \psi\in\sur{\langle s\rangle}\\ \varphi^{|u|}\psi^{j|u|}=1}} \tilde{\varphi}(t)^{-1}\tilde{\psi}(s)^{-1} |C_Q(t)||C_P(s)|\tilde{\phi}(s').
\]

This shows that if we have $\FFTD\big(\langle P's'\rangle,\Ps\big)\otimes_{k\Ps}F^{\Ps}_{P,s}\neq 0$, then there is an isomorphism $\eta: P'\to P$ and a $p'$-element $(u,s^j)\in \langle P's'\rangle\times \Ps$ such that $\eta\circ c_u=c_{s^j}\circ\eta$ and $|u|=|s'|$, $|s^j|=|s|$. In that case, assume further that $C_{\langle s\rangle}(P)=1$. Then we have $|c_s|=|s|$ and $|c_{s^j}|=|s^j|$. Since we have $\eta\circ c_u=c_{s^j}\circ\eta$ it follows that $|c_u|=|c_{s^j}|$. Therefore we have $|s| \mid |s'|$. But then \cite[Proposition 2.3.6]{MD} implies that there is a surjective group homomorphism $\sur{\eta}:\langle P's'\rangle\to \Ps$ that induces an isomorphism of pairs $\big(P'\ker(\sur{\eta})/\ker(\sur{\eta}), s'\ker(\sur{\eta})\big)\simeq (P,s)$. Note that since $|P'|=|P|$ the order of $\ker(\sur{\eta})$ is coprime to $p$. We have the following:

\begin{lemma}\label{equiv2}
Let $(P,s)$ be a pair with $C_{\langle s\rangle}(P)=1$ and set $G:=\Ps$. Let $H$ be a finite group. The following statements are equivalent:
\begin{enumerate}[(i)]
\item $\FFTD(H,G)\otimes_{kG} F^G_{P,s} \neq 0$.
\item There exists a pair $(P',s')$ contained in $H$ such that the pair $(P,s)$ is isomorphic to a $p'$-quotient of the pair $(P',s')$, that is, there exists a normal $p'$-subgroup $K$ of $\langle P's'\rangle$ such that $(P,s)\simeq (P'K/K,s'K)$.
\end{enumerate}
\end{lemma}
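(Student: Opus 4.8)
The plan is to exploit the two implications already isolated in the discussion preceding the statement, which together amount to the hard analytic work. For the direction (i) $\Rightarrow$ (ii), the computation with the product formula $F^{H\times G}_{Q,t}\otimes_{kG}F^G_{P,s}$ and the species $\tau^H_{P',s'}$ (using \cite[Lemma 2.7.6]{MD} and \cite[Proposition 2.3.6]{MD}) has already produced, from the hypothesis $\FFTD(H,G)\otimes_{kG}F^G_{P,s}\neq 0$, a pair $(P',s')$ in $H$, an isomorphism $\eta\colon P'\to P$, and a $p'$-element $(u,s^j)$ with $\eta\circ c_u=c_{s^j}\circ\eta$, $|u|=|s'|$, $|s^j|=|s|$; invoking $C_{\langle s\rangle}(P)=1$ it was concluded that $|s|\mid|s'|$ and then that there is a surjection $\sur\eta\colon\langle P's'\rangle\to G$ with kernel $K$ a $p'$-group and $(P'K/K,s'K)\simeq(P,s)$. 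So for this direction I would simply assemble these remarks: take $(P',s')$ as produced and $K:=\ker(\sur\eta)$, which is a normal $p'$-subgroup of $\langle P's'\rangle$, and conclude $(P,s)\simeq(P'K/K,s'K)$, which is exactly (ii).

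For the direction (ii) $\Rightarrow$ (i) I would argue as follows. Suppose $(P',s')$ is a pair in $H$ and $K\unlhd\langle P's'\rangle$ is a normal $p'$-subgroup with $(P,s)\simeq(P'K/K,s'K)$. Write $H':=\langle P's'\rangle$ and $\bar H':=H'/K$; by Lemma~\ref{biset action}(iv) together with Lemma~\ref{defconstant} (or directly via the $\DD$-pair analysis, noting $K$ is a $p'$-group) we have $\Def^{H'}_{\bar H'}F^{H'}_{P',s'}=m_{P',s',K}\,F^{\bar H'}_{P'K/K,\,s'K}$ with $m_{P',s',K}=1/|K|\neq0$. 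Using the identification $(P,s)\simeq(P'K/K,s'K)$ to transport $F^{\bar H'}_{P'K/K,s'K}$ to $F^G_{P,s}$ via the isomorphism $\bar H'\xrightarrow{\sim}G$ of Lemma \cite[Proposition 2.3.3]{MD}, and composing with $\Res^H_{H'}$ (which by Lemma~\ref{biset action}(i) sends $F^{H'}_{P',s'}$ to a sum of $F^H_{Q',t'}$), we see that the morphism $\mathrm{Iso}\circ\Def^{H'}_{\bar H'}\circ\Res^{H}_{H'}\in\FFT(H,G)$ sends (a nonzero scalar multiple of) $F^H_{\cdot}$-type elements in such a way that its value at $F^G_{P,s}$, after pre/post-composition, realizes $F^{G}_{P,s}$ itself up to the nonzero scalar $1/|K|$. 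Since $K$ is a $p'$-group, Lemma~5.x (the lemma giving $\Def\in\FFTD$, $\Inf\in\FFTD$ iff $p'$) guarantees $\Def^{H'}_{\bar H'}\in\FFTD$, and restriction and isomorphism maps always lie in $\FFTD$; hence the composite lies in $\FFTD(H,G)$, and applied to $F^G_{P,s}$ it is nonzero, giving (i).

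The main obstacle I anticipate is bookkeeping in the (ii) $\Rightarrow$ (i) direction: one must be careful that the composite morphism, evaluated at $F^G_{P,s}$ rather than at the module it came from, is genuinely nonzero. The clean way is to note that $F^G_{P,s}$ is a primitive idempotent of the split semisimple algebra $\FFT(G)$, so it suffices to exhibit \emph{some} morphism $X\in\FFTD(H,G)$ with $X\otimes_{kG}F^G_{P,s}\neq0$; one then produces instead a morphism $Y\in\FFTD(G,H)$ in the opposite direction and checks $(Y\circ X)\cdot F^G_{P,s}=\lambda F^G_{P,s}$ with $\lambda\neq0$ using parts (i)--(iv) of Lemma~\ref{biset action} to track the idempotent through $\Res$, $\Inf$, $\Def$, $\Ind$ and the isomorphism — in particular $\Inf^{\bar H'}_{\ }$... precisely $\Ind^{H}_{H'}\circ\Inf^{H'}_{\bar H'}$ applied to $F^{\bar H'}_{P'K/K,s'K}$ recovers a nonzero multiple of $F^{H'}_{P',s'}$ by parts (iii) and (ii), and this whole chain is a $\FFTD$-morphism because $\Inf^{H'}_{\bar H'}\in\FFTD$ ($K$ being $p'$). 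Assembling the scalars and checking none vanishes — all the relevant constants are $1/|K|$, indices $|N_G(Q,t):N_H(Q,t)|$, or Brauer-character values $\tilde\varphi(s^{-1})$ which are roots of unity — is the routine-but-delicate part.
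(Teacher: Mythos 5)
Your proposal is correct and follows essentially the same route as the paper: for (i)$\Rightarrow$(ii) you invoke the species computation preceding the lemma (the paper only adds the short bridging step of extracting an idempotent $F^H_{P',s'}$ from the nonzero image and restricting to $\langle P's'\rangle$ before applying that discussion), and your final ``clean'' argument for (ii)$\Rightarrow$(i) --- applying $\Ind^H_{\langle P's'\rangle}\Inf^{\langle P's'\rangle}_{\langle P's'\rangle/K}\Isom(\Phi)$ to $F^G_{P,s}$, noting it lies in $\FFTD(H,G)$ because $K$ is a $p'$-group and is nonzero by Lemma~\ref{biset action}(ii),(iii) --- is exactly the morphism the paper uses. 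The intermediate detour via $\Isom\circ\Def^{H'}_{\bar H'}\circ\Res^H_{H'}$ is superfluous (that composite points from $H$ to $G$, i.e.\ the wrong direction for (i)), but since you discard it in favor of the correct construction, the proof stands.
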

\begin{proof}
(i) $\Rightarrow$ (ii) Suppose we have $\FFTD(H,G)\otimes_{kG} F^G_{P,s} \neq 0$. Then there exists a pair $(P',s')$ in $H$ such that
\[
F^H_{P',s'}\in \FFTD(H,G)\otimes_{kG} F^G_{P,s}.
\]
Via the restriction map this implies that we have
\[
F^{\langle P's'\rangle}_{P',s'}\in \FFTD\big(\langle P's'\rangle,G\big)\otimes_{kG} F^G_{P,s}.
\]
Therefore by the argument above we have an isomorphism $(P'K/K,s'K)\simeq (P,s)$ of pairs where $K$ is a normal $p'$-subgroup of $\langle P's'\rangle$. \\
(ii) $\Rightarrow$ (i) Suppose $\Phi: (P'K/K,s'K)\to (P,s)$ is an isomorphism of pairs where $K$ is a normal $p'$-subgroup of $\langle P's'\rangle$. Then we have 
\[
\Ind^H_{\langle P's'\rangle}\Inf^{\langle P's'\rangle}_{\langle P's'\rangle/K}\Isom(\Phi)F^G_{P,s}\neq 0.
\]
This shows (i).
\end{proof}

\begin{prop}
Let $(P,s)$ be a pair. The following are equivalent:
\begin{enumerate}[(i)]
\item $(P,s)$ is a $\DD$-pair.
\item For any finite group $H$ with $|H| < |\Ps|$, we have $\beps(H)=\{0\}$.
\item If $H$ is a finite group with $\beps(H)\neq \{0\}$, then the pair $(P,s)$ is isomorphic to a $p'$-quotient of a pair $(P',s')$ contained in $H$. 
\item For any nontrivial normal $p'$-subgroup $N$ of $\Ps$, we have $\Def^{\Ps}_{\Ps/N} F^{\Ps}_{P,s}=0$.
\item The group $\Ps$ does not have any nontrivial normal $p'$-subgroup.
\item We have $C_{\langle s\rangle}(P)=1$.
\end{enumerate}
\end{prop}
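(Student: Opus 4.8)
The plan is to split the six conditions into two blocks and bridge them. Conditions (i), (iv), (v), (vi) are already mutually equivalent from what precedes: (i) $\Leftrightarrow$ (iv) is the definition of a $\DD$-pair, and (i) $\Leftrightarrow$ (v) $\Leftrightarrow$ (vi) is Lemma \ref{DDpair} (which itself rests on Lemma \ref{defconstant} and Lemma \ref{biset action}(iv)). So the real task is to insert (ii) and (iii) into this equivalence class, and I would do that by establishing the cycle (vi) $\Rightarrow$ (iii) $\Rightarrow$ (ii) $\Rightarrow$ (v).

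For (vi) $\Rightarrow$ (iii), put $G:=\Ps$. Since $\beps$ is the subfunctor of $\FFTD$ generated by $F^{G}_{P,s}\in\FFTD(G)$, for every finite group $H$ one has $\beps(H)=\FFTD(H,G)\otimes_{kG}F^{G}_{P,s}$, namely the image of the $\FF$-linear map $X\mapsto X\otimes_{kG}F^{G}_{P,s}$ (which follows at once from the definition of a generated subfunctor, and is the identification already in play in the proof of Proposition \ref{sumofsubfunctors}). Under the hypothesis $C_{\langle s\rangle}(P)=1$, Lemma \ref{equiv2} applies to $G$, and its implication (i) $\Rightarrow$ (ii) states exactly that $\beps(H)\neq\{0\}$ forces $(P,s)$ to be isomorphic to a $p'$-quotient of some pair $(P',s')$ contained in $H$; this is statement (iii), so nothing further is needed here.

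Next, (iii) $\Rightarrow$ (ii) should be a pure order count: if $|H|<|\Ps|$ and $(P,s)$ were isomorphic to a $p'$-quotient $(P'K/K,s'K)$ of a pair $(P',s')$ contained in $H$, then $|P'|=|P|$ (because $K$ is a $p'$-group, so $P'\cap K=1$) and $|\langle s\rangle|$ divides $|\langle s'\rangle|$ (because $\langle s\rangle$ is identified with a quotient of $\langle s'\rangle$), whence $|\langle P's'\rangle|=|P'|\,|\langle s'\rangle|\geqslant|P|\,|\langle s\rangle|=|\Ps|>|H|$, contradicting $\langle P's'\rangle\leqslant H$; so no such $(P',s')$ exists, and the contrapositive of (iii) gives $\beps(H)=\{0\}$. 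For (ii) $\Rightarrow$ (v) I would argue by contraposition: if $N\unlhd\Ps$ is a nontrivial normal $p'$-subgroup, set $H:=\Ps/N$, so $|H|<|\Ps|$; because $N$ is a $p'$-group, $\Def^{\Ps}_{\Ps/N}$ lies in $\FFTD(\Ps/N,\Ps)$ and so is a morphism $\Ps\to\Ps/N$ in $\FF pp_k^\Delta$, giving $\Def^{\Ps}_{\Ps/N}F^{\Ps}_{P,s}\in\beps(\Ps/N)$; by Lemma \ref{biset action}(iv) and Lemma \ref{defconstant} this equals $\tfrac{1}{|N|}F^{\Ps/N}_{PN/N,\bar s}$, a nonzero scalar multiple of a primitive idempotent, hence nonzero. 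Thus $\beps(H)\neq\{0\}$ with $|H|<|\Ps|$, contradicting (ii). Together with the equivalences of (i), (iv), (v), (vi) this closes everything.

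The argument is essentially bookkeeping layered on machinery already established: the non-vanishing criterion of Lemma \ref{equiv2}, the deflation formula of Lemma \ref{biset action}(iv), and the computation $m_{P,s,N}=1/|N|$. The one place I expect to need care is the step (ii) $\Rightarrow$ (v): one must check that the deflation morphism genuinely lies in the diagonal category $\FF pp_k^\Delta$ and not merely in the larger $p$-permutation category, and this is precisely where the hypothesis that $N$ is a $p'$-group enters, via the lemma characterizing when $\Def^{G}_{G/N}$ has twisted diagonal vertices.
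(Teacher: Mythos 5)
Your proof is correct and takes essentially the same route as the paper: (i), (iv), (v), (vi) are linked by the definition of $\DD$-pair and Lemma \ref{DDpair}, (iii) is obtained from Lemma \ref{equiv2} (using the identification of $\beps(H)$ with $\FFTD(H,\Ps)\otimes_{k\Ps}F^{\Ps}_{P,s}$, which the paper also uses implicitly), and (iii)$\Rightarrow$(ii) is the same order count. The only difference is that the paper closes the cycle with (ii)$\Rightarrow$(iv), labelled ``Clear'', while you close with (ii)$\Rightarrow$(v) and spell out that step explicitly (membership of $\Def^{\Ps}_{\Ps/N}$ in the diagonal category for $p'$-groups $N$, plus $m_{P,s,N}=1/|N|\neq 0$), which is precisely the content the paper leaves implicit.
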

\begin{proof}
(vi)$\Leftrightarrow$(v)$\Leftrightarrow$(i) : This follows from Lemma \ref{DDpair}.\\
(iv)$\Leftrightarrow$ (i): This follows from the definition of $\DD$-pairs.\\
(i)$\Rightarrow$ (iii): Since $(P,s)$ is a $\DD$-pair, we have $C_{\langle s\rangle}(P)=1$. So (iii) follows from Lemma \ref{equiv2}.\\
(iii)$\Rightarrow$ (ii): Assume that (iii) holds and $\beps(H)\neq 0$ where $H$ is a finite group with $\lvert H\rvert <\lvert \Ps\rvert$. Then by the assumption, we have $\lvert H\rvert \geq \lvert \langle P's'\rangle\rvert\geq \lvert \Ps\rvert$. Contradiction. \\
(ii)$\Rightarrow$ (iv): Clear. 
\end{proof}

\begin{prop}\label{inclusions}
Let $(P,s)$ and $(Q,t)$ be two pairs.
\begin{enumerate}[(i)]
\item If the pair $(Q,t)$ is isomorphic to a $p'$-quotient of the pair $(P,s)$, then we have $\beps = \beqt$. 
\item If $(Q,t)$ is a $\DD$-pair, and if $\beps\leqslant \beqt$, then $(Q,t)$ is isomorphic to a $p'$-quotient of $(P,s)$. 
\end{enumerate}
\end{prop}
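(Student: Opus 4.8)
The plan is to deduce both parts from the elementary-move formulas of Lemma~\ref{biset action}, using that inflation and deflation along a $p'$-quotient are morphisms of $\FF pp_k^\Delta$, that $\Def^{\Ps}_{\Ps/N}F^{\Ps}_{P,s}$ equals $\frac{1}{|N|}F^{\Ps/N}_{PN/N,\sur{s}}$ (Lemma~\ref{biset action}(iv) together with Lemma~\ref{defconstant}) whenever $N\unlhd\Ps$ is a $p'$-group, and that an isomorphism of finite groups induces an invertible morphism of $\FF pp_k^\Delta$ lying in $\FFTD$, since its graph is twisted diagonal. I will also use that, for any subfunctor $F$ of $\FFTD$ and any finite group $G$, the space $F(G)$ is an ideal of $\FFT(G)$.

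\textbf{Part (i).} Assume $(Q,t)\simeq(PN/N,sN)$ for a normal $p'$-subgroup $N\unlhd\Ps$, and fix a group isomorphism $f\colon\Qt\to\Ps/N$ carrying the pair $(Q,t)$ onto a conjugate of the pair $(PN/N,sN)$, so that $\Isom(f)\otimes_{k\Qt}F^{\Qt}_{Q,t}=F^{\Ps/N}_{PN/N,\sur{s}}$. For $\beqt\leqslant\beps$ it is enough to put $F^{\Qt}_{Q,t}$ into $\beps(\Qt)$, which I will do by applying $\Isom(f^{-1})\circ\Def^{\Ps}_{\Ps/N}\in\FFTD(\Qt,\Ps)$ to $F^{\Ps}_{P,s}$: by Lemma~\ref{biset action}(iv) and Lemma~\ref{defconstant} the result is $\frac{1}{|N|}F^{\Qt}_{Q,t}\neq0$. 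For the reverse inclusion $\beps\leqslant\beqt$ it is enough to put $F^{\Ps}_{P,s}$ into $\beqt(\Ps)$, which I will do by applying $\Inf^{\Ps}_{\Ps/N}\circ\Isom(f)\in\FFTD(\Ps,\Qt)$ to $F^{\Qt}_{Q,t}$: by Lemma~\ref{biset action}(iii) the outcome is a sum $\sum_{(R,r)}F^{\Ps}_{R,r}$ of pairwise orthogonal primitive idempotents in which $F^{\Ps}_{P,s}$ occurs, since the pair $(P,s)$ satisfies $PN/N=PN/N$ and $\sur{s}=sN$; as $\beqt(\Ps)$ is an ideal of $\FFT(\Ps)$, multiplying this element by the idempotent $F^{\Ps}_{P,s}$ isolates $F^{\Ps}_{P,s}$. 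The two inclusions give $\beps=\beqt$ (the case $N=1$ being trivial).

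\textbf{Part (ii).} Assume $(Q,t)$ is a $\DD$-pair, so $C_{\langle t\rangle}(Q)=1$ by Lemma~\ref{DDpair}, and $\beps\leqslant\beqt$. Evaluating at $\Ps$ yields $F^{\Ps}_{P,s}\in\beps(\Ps)\subseteq\beqt(\Ps)=\FFTD(\Ps,\Qt)\otimes_{k\Qt}F^{\Qt}_{Q,t}$. I would then rerun the computation of the discussion preceding Lemma~\ref{equiv2}, with the roles $G:=\Qt$ (the side carrying the idempotent, for which $C_{\langle t\rangle}(Q)=1$ holds) and $H=\langle P's'\rangle:=\Ps$, and with the pair $(P',s')$ taken to be $(P,s)$ itself. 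Since $F^{\Ps}_{P,s}$ lies in $\FFTD(\Ps,\Qt)\otimes_{k\Qt}F^{\Qt}_{Q,t}$, that argument produces a surjective group homomorphism $\sur{\eta}\colon\Ps\to\Qt$ inducing an isomorphism of pairs $\big(P\ker(\sur{\eta})/\ker(\sur{\eta}),\,s\ker(\sur{\eta})\big)\simeq(Q,t)$ with $|\ker(\sur{\eta})|$ prime to $p$. Taking $N:=\ker(\sur{\eta})$, a normal $p'$-subgroup of $\Ps$, this says exactly that $(Q,t)$ is isomorphic to a $p'$-quotient of $(P,s)$.

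The point requiring care is in part (ii): one must feed the \emph{specific} idempotent $F^{\Ps}_{P,s}$ into the computation preceding Lemma~\ref{equiv2} --- i.e. run it with $(P',s')=(P,s)$ --- rather than merely quote Lemma~\ref{equiv2}, which would only give that $(Q,t)$ is a $p'$-quotient of \emph{some} pair contained in $\Ps$; the sharper version is what pins the quotient down to $(P,s)$. One must also be careful to place the hypothesis $C_{\langle t\rangle}(Q)=1$ on the right pair. In part (i) the only mild wrinkle is that inflation returns a sum of idempotents, which is cleared away by the ideal property of subfunctor evaluations.
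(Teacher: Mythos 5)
Your proposal is correct and follows essentially the same route as the paper: part (i) is proved by pushing the idempotents along $\Def^{\Ps}_{\Ps/N}$, $\Inf^{\Ps}_{\Ps/N}$ and $\Isom$ (all lying in $\FFTD$ since $N$ is a $p'$-group) and using the ideal property of subfunctor evaluations together with Lemmas~\ref{biset action} and~\ref{defconstant}, and part (ii) by running the computation preceding Lemma~\ref{equiv2} with $G=\Qt$, $H=\Ps$ and $(P',s')=(P,s)$, exactly as the paper does when it invokes ``the proof of Lemma~\ref{equiv2}''. Your remark that one must use the proof (with the specific idempotent $F^{\Ps}_{P,s}$) rather than the bare statement of Lemma~\ref{equiv2} is precisely the point the paper relies on.
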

\begin{proof}
(i) Assume we have an isomorphism $\phi: (PK/K, sK)\to (Q,t)$ of pairs for some normal $p'$-subgroup $K$ of $\Ps$. Then we have 
\[
F^{\Ps}_{P,s}\otimes_{k} \Inf^{\Ps}_{\Ps/K}\Isom(\phi^{-1})F^{\Qt}_{Q,t}\neq 0.
\]
Therefore we have $F^{\Ps}_{P,s}\in \beqt\big(\Ps\big)$ which implies that $\beps\leqslant \beqt$. 

Now we also have 
\[
F^{\Qt}_{Q,t}\otimes_k \Isom(\phi)\Def^{\Ps}_{\Ps/K} F^{\Ps}_{P,s}\neq 0
\] 
which implies that $F^{\Qt}_{Q,t}\in \beps\big(\Qt\big)$. Therefore we have $\beqt\leqslant\beps$ and so $\beqt=\beps$ as desired. \\
(ii) Since $\beps\leqslant \beqt$, we have $F^{\Ps}_{P,s}\in \beqt\big(\Ps\big)$. Since $(Q,t)$ is a $\DD$-pair, by the proof of Lemma \ref{equiv2}, there exists a normal $p'$-subgroup $K$ of $\Ps$ such that $(Q,t)\simeq (PK/K,sK)$.  
\end{proof}

\begin{prop}\label{evaluation}
Let $F$ be a nonzero subfunctor of $\FFTD$. If $H$ is a minimal group of $F$, then $H=\Qt$ for some $\DD$-pair $(Q,t)$. Moreover we have 
\[
F(H)\leqslant \bigoplus_{\substack{(Q',t'), \DD-pair \\ \langle Q't'\rangle=H}} \FF F^H_{Q',t'}
\]
and $\beqt\leqslant F$. 

In particular, if $F=\beqt$ for some $\DD$-pair $(Q,t)$, then we have 
\[
\beqt\big(\Qt\big) = \bigoplus_{\substack{(Q',t')\simeq (Q,t)\\ \langle Q't'\rangle =\Qt}} \FF F^H_{Q',t'}.
\]
\end{prop}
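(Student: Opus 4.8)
The plan is to combine three ingredients from the preceding sections: that $F(G)$ is an ideal of the split semisimple commutative algebra $\FFTD(G)=\FFT(G)$ for every finite group $G$; the action of restriction, induction and deflation on the primitive idempotents $F^G_{P,s}$ (Lemma~\ref{biset action}); and the fact that any restriction, as well as deflation along a $p'$-subgroup, defines a morphism in $\FF pp_k^\Delta$. First I would pick, inside the nonzero ideal $F(H)$ of $\FFT(H)$, a primitive idempotent: since that algebra is split semisimple and commutative, $F(H)$ is the span of the idempotents $F^H_{P,s}$ it contains, so there is some $(P,s)\in\mathcal{Q}_{H,p}$ with $F^H_{P,s}\in F(H)$. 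Next I would show $\langle Ps\rangle=H$: the restriction morphism $\Res^H_{\langle Ps\rangle}$ lies in $\FF pp_k^\Delta$ (all vertices of indecomposable summands of $kH$ as a $(k\langle Ps\rangle,kH)$-bimodule are, up to conjugacy, subgroups of the twisted diagonal $\Delta(\langle Ps\rangle)$), so by Lemma~\ref{biset action}(i) the element $\Res^H_{\langle Ps\rangle}F^H_{P,s}\in F(\langle Ps\rangle)$ is a sum of primitive idempotents that includes the $(P,s)$-term; multiplying inside the ideal $F(\langle Ps\rangle)$ by $F^{\langle Ps\rangle}_{P,s}$ shows $F(\langle Ps\rangle)\neq\{0\}$, and minimality of $H$ together with $\langle Ps\rangle\leqslant H$ forces $\langle Ps\rangle=H$.

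Then I would argue that $(P,s)$ is a $\DD$-pair: if not, then by Lemma~\ref{DDpair} the group $H=\Ps$ has a nontrivial normal $p'$-subgroup $N$; since $N$ is a $p'$-group, $\Def^H_{H/N}$ is a morphism in $\FF pp_k^\Delta$, and Lemma~\ref{biset action}(iv) with Lemma~\ref{defconstant} gives $\Def^H_{H/N}F^H_{P,s}=\frac{1}{|N|}F^{H/N}_{PN/N,\bar{s}}\neq 0$ in $F(H/N)$, contradicting minimality since $|H/N|<|H|$. Taking $(Q,t):=(P,s)$ establishes the first assertion. Running this same reasoning for every $(P,s)$ with $F^H_{P,s}\in F(H)$ shows that each such $(P,s)$ is a $\DD$-pair with $\langle Ps\rangle=H$, which yields the displayed containment for $F(H)$; and since $F^{\Qt}_{Q,t}=F^H_{Q,t}\in F(H)$ while $\beqt$ is by definition the subfunctor of $\FFTD$ generated by $F^{\Qt}_{Q,t}$, we get $\beqt\leqslant F$.

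For the final "in particular" statement I would first observe that $\Qt$ is a minimal group of $\beqt$: by the $\DD$-pair equivalences established just before Proposition~\ref{inclusions}, $\beqt(H')=\{0\}$ whenever $|H'|<|\Qt|$, while $F^{\Qt}_{Q,t}\in\beqt(\Qt)$ is nonzero. Applying the already-proved part of Proposition~\ref{evaluation} to $F=\beqt$, it then remains only to identify which primitive idempotents $F^{\Qt}_{Q',t'}$, with $(Q',t')$ a $\DD$-pair satisfying $\langle Q't'\rangle=\Qt$, actually lie in $\beqt(\Qt)$, and I would do this with both halves of Proposition~\ref{inclusions}. If $(Q',t')\simeq(Q,t)$, then $(Q,t)$ is the $p'$-quotient of $(Q',t')$ by the trivial subgroup, so Proposition~\ref{inclusions}(i) gives $\mathbf{e}_{Q',t'}=\beqt$ and hence $F^{\Qt}_{Q',t'}\in\beqt(\Qt)$ (here $\langle Q't'\rangle=\Qt$ because the underlying groups have equal order). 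Conversely, if $F^{\Qt}_{Q',t'}\in\beqt(\Qt)$ with $\langle Q't'\rangle=\Qt$, then $\mathbf{e}_{Q',t'}\leqslant\beqt=\mathbf{e}_{Q,t}$; since $(Q,t)$ is a $\DD$-pair, Proposition~\ref{inclusions}(ii) supplies a normal $p'$-subgroup $K\unlhd\langle Q't'\rangle=\Qt$ with $(Q,t)\simeq(Q'K/K,t'K)$, and comparing the orders of the underlying groups forces $K=1$, whence $(Q',t')\simeq(Q,t)$. This gives precisely the asserted direct-sum decomposition of $\beqt(\Qt)$.

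I expect the steps through "$H=\Ps$ with $(P,s)$ a $\DD$-pair" to be routine bookkeeping with the idempotent formulas; the one genuinely delicate point is the last paragraph—pinning down the exact set of idempotents occurring in $\beqt(\Qt)$—where one must push the chain $\mathbf{e}_{Q',t'}\leqslant\beqt=\mathbf{e}_{Q,t}$ through Proposition~\ref{inclusions} in both directions and use the order comparison to kill the auxiliary normal $p'$-subgroup $K$. A secondary care point, easy to overlook, is to verify at each use that the auxiliary morphisms (restriction, and deflation along a $p'$-subgroup) really do lie in $\FF pp_k^\Delta$ and not merely in the larger $p$-permutation category, which is exactly where the $p'$-hypotheses are used.
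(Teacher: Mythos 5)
Your proof is correct and follows essentially the same route as the paper: pick a primitive idempotent $F^H_{Q,t}\in F(H)$, restrict to $\Qt$ and use minimality to get $H=\Qt$, use deflation by a normal $p'$-subgroup together with $m_{Q,t,N}=1/|N|$ to force the $\DD$-pair condition, and settle the "in particular" statement via both parts of Proposition~\ref{inclusions} with an order comparison killing the auxiliary $p'$-subgroup $K$. Your extra verifications (that restriction and $p'$-deflation lie in $\FF pp_k^\Delta$, and that $\Qt$ is a minimal group of $\beqt$) are points the paper leaves implicit, but they do not change the argument.
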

\begin{proof}
Let $F$ be a nonzero subfunctor of $\FFTD$ and assume $H$ is a minimal group of~$F$. Since $F(H)\neq 0$, there exists a pair $(Q,t)\in \mathcal{Q}_{H,p}$ such that $F^H_{Q,t}\in F(H)$. This implies, via the restriction map, that we have $F^{\Qt}_{Q,t}\in F\big(\Qt\big)$. Since $H$ is a minimal group, this implies that we have $H=\Qt$. Now if $N$ is a normal $p'$-subgroup of $\Qt$, then $\Def^{\Qt}_{\Qt/N} F^{\Qt}_{Q,t}=\frac{1}{|N|} F^{\Qt/N}_{QN/N,tN}\neq 0$. Again since $H$ is a minimal group this means that $N$ is trivial and hence the pair $(Q,t)$ is a $\DD$-pair. It follows moreover that
\[
F(H)\leqslant \bigoplus_{\substack{(Q',t'), \DD-pair \\ \langle Q't'\rangle=H}} \FF F^H_{Q',t'}.
\]
For the last part, consider the functor $\beqt$ for some $\DD$-pair $(Q,t)$. If $F^{\Qt}_{Q',t'}\in \beqt\big(\Qt\big)$ for some $\DD$-pair $(Q',t')$, then by the second part of Proposition \ref{inclusions}, the pair $(Q,t)$ is isomorphic to a $p'$-quotient of the pair $(Q',t')$. But the pair $(Q',t')$ is contained in $\Qt$. Thus we have $(Q',t')\simeq (Q,t)$.\\ Conversely, if the pairs $(Q',t')$ and $(Q,t)$ are isomorphic via a map $\phi$, then we have $F^{\Qt}_{Q',t'}=\Isom(\phi)F^{\Qt}_{Q,t}$. Therefore we have
\[
\beqt\big(\Qt\big) = \bigoplus_{\substack{(Q',t')\simeq (Q,t)\\ \langle Q't'\rangle =\Qt}} \FF F^H_{Q',t'}.
\]
\end{proof}

Let $(P,s)$ be a pair and $N$ a normal $p'$-subgroup of $\Ps$. Then the pair $(PN/N,sN)$ is a $p'$-quotient of the pair $(P,s)$ and so by Proposition \ref{inclusions} we have $\beps=\mathbf{e}_{PN/N,sN}$.

\begin{prop}\label{uniqueminimal}
Let $(P,s)$ be a pair. Then the group $\Ps/C_{\langle s\rangle}(P)$ is the unique, up to isomorphism, minimal group of the functor $\beps$. Moreover there is a unique isomorphism class of $\DD$-pairs $(P',s')$ such that $\langle P's'\rangle\cong \Ps/C_{\langle s\rangle}(P)$ and we have $\mathbf{e}_{P',s'}=\beps$. Furthermore we have $(P',s')\simeq \big(PC_{\langle s\rangle}(P)/C_{\langle s\rangle}(P),sC_{\langle s\rangle}(P)\big)$.
\end{prop}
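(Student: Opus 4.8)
The plan is to exhibit the required $\DD$-pair explicitly and to deduce the three assertions from Lemmas~\ref{DDpair} and~\ref{equiv2} and Proposition~\ref{inclusions}. Put $N:=C_{\langle s\rangle}(P)$ and $G:=\Ps$. First I would record the elementary facts about $N$: it is a subgroup of the $p'$-group $\langle s\rangle$, hence a $p'$-group, and each of its elements centralises $P$ by definition of $C_{\langle s\rangle}(P)$ and centralises $\langle s\rangle$ since $\langle s\rangle$ is abelian, so $N$ is a (central, hence) normal $p'$-subgroup of $G$. Consequently $(PN/N,sN)$ is a $p'$-quotient of $(P,s)$, so $\beps=\mathbf{e}_{PN/N,sN}$ by Proposition~\ref{inclusions}(i) (this is also recorded just before the statement), and $\langle (PN/N)(sN)\rangle=G/N=\Ps/C_{\langle s\rangle}(P)$. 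Writing $(P',s'):=(PN/N,sN)$, one has $P\cap N=1$, so the quotient map restricts to an isomorphism $P\cong PN/N$ carrying the conjugation action of $s$ to that of $sN$; hence $C_{\langle sN\rangle}(PN/N)=1$ and $(P',s')$ is a $\DD$-pair by Lemma~\ref{DDpair}. Since $PN/N=PC_{\langle s\rangle}(P)/C_{\langle s\rangle}(P)$ and $sN=sC_{\langle s\rangle}(P)$, the last sentence of the proposition is immediate once $(P',s')$ is shown to be the required pair.

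Next I would determine where $\beps=\mathbf{e}_{P',s'}$ is supported. For a finite group $H$, the evaluation $\mathbf{e}_{P',s'}(H)$ equals $\FFTD\big(H,\langle P's'\rangle\big)\otimes_{k\langle P's'\rangle}F^{\langle P's'\rangle}_{P',s'}$, so Lemma~\ref{equiv2} (valid because the $\DD$-pair $(P',s')$ satisfies $C_{\langle s'\rangle}(P')=1$) tells us that $\beps(H)\neq 0$ if and only if $H$ contains a pair $(R,r)$ for which $(P',s')$ is isomorphic to a $p'$-quotient $(RK/K,rK)$ of $(R,r)$. In that situation $|H|\geq|\langle Rr\rangle|=|K|\cdot|\langle P's'\rangle|\geq|\langle P's'\rangle|=|G/N|$, and both inequalities are equalities exactly when $K=1$ and $\langle Rr\rangle=H$, which forces $(R,r)\simeq(P',s')$ and $H\cong G/N$. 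As $F^{G/N}_{P',s'}$ is a nonzero element of $\beps(G/N)$, the group $G/N$ has minimal order among the groups on which $\beps$ does not vanish, so it is a minimal group of $\beps$; and the inequality just displayed shows that every minimal group of $\beps$ is isomorphic to $G/N$. Hence $\Ps/C_{\langle s\rangle}(P)$ is the unique, up to isomorphism, minimal group of $\beps$.

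Finally, for the uniqueness of the $\DD$-pair: existence is settled, since $(P',s')$ is a $\DD$-pair with $\langle P's'\rangle\cong G/N$ and $\mathbf{e}_{P',s'}=\beps$. If $(P_1,s_1)$ is any $\DD$-pair with $\mathbf{e}_{P_1,s_1}=\beps=\mathbf{e}_{P',s'}$, then in particular $\mathbf{e}_{P',s'}\leq\mathbf{e}_{P_1,s_1}$, so Proposition~\ref{inclusions}(ii), applied with $(P_1,s_1)$ in the role of the $\DD$-pair, gives that $(P_1,s_1)$ is isomorphic to a $p'$-quotient of $(P',s')$. But $\langle P's'\rangle$ has no nontrivial normal $p'$-subgroup by Lemma~\ref{DDpair}, so the only $p'$-quotient of $(P',s')$ up to isomorphism is $(P',s')$ itself; therefore $(P_1,s_1)\simeq(P',s')$.

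No step is a serious obstacle; the routine parts are the group theory surrounding $N=C_{\langle s\rangle}(P)$ and the order count in the middle paragraph. The one place to be careful is the direction of the subfunctor inclusions when invoking Proposition~\ref{inclusions}(ii), together with the matching of variables in Lemma~\ref{equiv2}, where $(P',s')$ must be taken as the source pair while the ambient group $H$ supplies a possibly larger pair $(R,r)$; once these conventions are lined up, the estimate closes immediately.
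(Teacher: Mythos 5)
Your proof is correct, but it is routed differently from the paper's. The paper starts from an arbitrary $\DD$-pair $(P',s')$ whose group is a minimal group of $\beps$ --- such a pair exists by Proposition~\ref{evaluation} --- and then pins it down: since $(PN/N,sN)$ is a $\DD$-pair with $\mathbf{e}_{PN/N,sN}=\beps\geqslant\mathbf{e}_{P',s'}$, Proposition~\ref{inclusions}(ii) gives $(P'K/K,s'K)\simeq(PN/N,sN)$ for some normal $p'$-subgroup $K$ of $\langle P's'\rangle$, and minimality forces $K=1$; uniqueness of the minimal group is then obtained by running the same argument for any minimal group via Proposition~\ref{evaluation}. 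You bypass Proposition~\ref{evaluation} entirely: after verifying by hand that $N=C_{\langle s\rangle}(P)$ is a central $p'$-subgroup of $G=\Ps$ and that $(PN/N,sN)$ is a $\DD$-pair (a check the paper leaves implicit), you characterize the support of $\beps=\mathbf{e}_{PN/N,sN}$ through Lemma~\ref{equiv2} and close with the order estimate $|H|\geqslant|\langle Rr\rangle|=|K|\cdot|\langle P's'\rangle|\geqslant|G/N|$, whose equality case identifies every minimal group with $G/N$; and you prove uniqueness of the $\DD$-pair class purely inside the subfunctor lattice, since any $\DD$-pair $(P_1,s_1)$ with $\mathbf{e}_{P_1,s_1}=\beps$ is isomorphic to a $p'$-quotient of $(P',s')$ by Proposition~\ref{inclusions}(ii), hence to $(P',s')$ itself because a $\DD$-pair admits no nontrivial $p'$-quotient. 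What each route buys: the paper's is shorter because Proposition~\ref{evaluation} already packages the fact that minimal groups arise as $\Qt$ for $\DD$-pairs $(Q,t)$ with $\beqt\leqslant F$; yours is more self-contained and explicit, and it yields the slightly stronger fact that $(P',s')$ is the unique $\DD$-pair up to isomorphism with $\mathbf{e}_{P',s'}=\beps$, independently of any condition on $\langle P's'\rangle$. Both arguments ultimately rest on the same computation underlying Lemma~\ref{equiv2} and Proposition~\ref{inclusions}, so the difference is organizational rather than substantive.
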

\begin{proof}
Let $(P',s')$ be a $\DD$-pair such that $\langle P's'\rangle$ is a minimal group of the functor $\beps$. By Proposition \ref{evaluation}, we have $\mathbf{e}_{P',s'}\leqslant \beps$. Let $N:=C_{\langle s\rangle}(P)$. Then the pair $(PN/N,sN)$ is a $\DD$-pair, and we have $\beps=\mathbf{e}_{PN/N,sN}$. Since $(PN/N,sN)$ is a $\DD$-pair, by Proposition \ref{inclusions} there exists a normal $p'$-subgroup $K$ of $\langle P's'\rangle$ such that $(P'K/K, s'K)\simeq (PN/N,sN)$. This means that the idempotent $F^{\langle P's'\rangle /K}_{P'K/K,s'K}$ is in the evaluation at $\langle P's'\rangle /K$ of the functor $\mathbf{e}_{PN/N,sN}=\beps$.  Since the group $\langle P's'\rangle$ is a minimal group of $\beps$ it follows that we must have $K=1$. Thus we have $(P',s')\simeq (PN/N,sN)$. Therefore we have $\mathbf{e}_{P',s'}=\textbf{e}_{PN/N,sN}=\beps$. 

Now we will show the uniqueness of the isomorphism class of the minimal groups of $\beps$. Let $H$ be a minimal group of $\beps$. It suffices to show that $H$ is isomorphic to $\langle P's'\rangle$. By Proposition \ref{evaluation} the group $H$ is of the form $H=\Qt$ for some $\DD$-pair $(Q,t)$. By the first part of the proof we have $\beqt = \beps = \mathbf{e}_{P',s'}$. Since both $(Q,t)$ and $(P,s)$ are $\DD$-pairs, the equality $\beqt = \mathbf{e}_{P',s'}$ implies that $(Q,t)$ is isomorphic to a $p'$-quotient of $(P,s)$, and vice versa. Therefore we have $(Q,t)\simeq (P',s')$ which implies that $H=\Qt\cong \langle P's'\rangle$ as desired. 
\end{proof}

For any pair $(P,s)$ we denote by $(\tilde{P},\tilde{s})$ a representative of the isomorphism class of the pair $(PC_{\langle s\rangle}(P)/C_{\langle s\rangle}(P),sC_{\langle s\rangle}(P))$.

\begin{theorem}\label{long}
Let $(P,s)$ be a pair.
\begin{enumerate}[(i)]
\item If $(Q,t)$ is isomorphic to a $p'$-quotient of $(P,s)$ and if $(Q,t)$ is a $\DD$-pair, then $(Q,t)$ is isomorphic to the pair $(\tilde{P},\tilde{s})$. In particular, for any normal $p'$-subgroup $N\unlhd \Ps$, we have $(PN/N, sN)\simeq (\tilde{P},\tilde{s})$ if and only if $(PN/N,sN)$ is a $\DD$-pair.
\item Let $N\unlhd \Ps$ be a normal $p'$-subgroup. Then the pair  $(\tilde{P},\tilde{s})$ is isomorphic to a $p'$-quotient of $(PN/N,sN)$ and we have $(\tilde{P},\tilde{s})\simeq (\widetilde{PN/N},\widetilde{sN})$.
\end{enumerate}
\end{theorem}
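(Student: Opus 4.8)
The plan is to reduce the whole statement to a single elementary fact about the group $\Ps$: \emph{every normal $p'$-subgroup of $\Ps$ is contained in $C_{\langle s\rangle}(P)$}, so that $C_{\langle s\rangle}(P)$ is the largest normal $p'$-subgroup of $\Ps$. To prove this I would argue that if $N\unlhd\Ps$ is a $p'$-group then $N\cap P$ is a normal $p'$-subgroup of $P$, hence trivial, and $[N,P]\leqslant N\cap P=1$ since $N$ and $P$ are both normal, so $N$ centralises $P$; writing an element $n\in N$ as $n=xs^i$ with $x\in P$, the relation $nun^{-1}=u$ for all $u\in P$ forces $c_{s^i}=c_{x^{-1}}$ on $P$, in particular ${}^{s^i}x=x$, whence $n^m=x^ms^{mi}$ for every $m$; as $n$ has $p'$-order this gives $x=1$, so $n=s^i\in\langle s\rangle$ and $n$ centralises $P$, i.e.\ $N\leqslant C_{\langle s\rangle}(P)$. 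Conversely $C_{\langle s\rangle}(P)$ is itself a (central, hence) normal $p'$-subgroup, so the two coincide. This is essentially the principle already used in the proof of Lemma~\ref{DDpair}, so it should be routine.

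For part (i), I would take the hypothesis in the form $(Q,t)\simeq(PN/N,sN)$ for some normal $p'$-subgroup $N\unlhd\Ps$, together with $(Q,t)$ being a $\DD$-pair. Since being a $\DD$-pair depends only on whether the associated group has a nontrivial normal $p'$-subgroup (Lemma~\ref{DDpair}), it is an isomorphism invariant, so $(PN/N,sN)$ is also a $\DD$-pair; that is, $\Ps/N=\langle(PN/N)(sN)\rangle$ has no nontrivial normal $p'$-subgroup. Now $N\leqslant C_{\langle s\rangle}(P)$ by the fact above, so $C_{\langle s\rangle}(P)/N$ is a normal $p'$-subgroup of $\Ps/N$ and must therefore be trivial, giving $N=C_{\langle s\rangle}(P)$. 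Hence $(Q,t)\simeq(PN/N,sN)=(PC_{\langle s\rangle}(P)/C_{\langle s\rangle}(P),\,sC_{\langle s\rangle}(P))\simeq(\tilde P,\tilde s)$. For the ``in particular'' clause, one implication is exactly what was just proved; for the other I would observe that $(\tilde P,\tilde s)$ is itself a $\DD$-pair — its group is $\Ps/C_{\langle s\rangle}(P)$, which has no nontrivial normal $p'$-subgroup by the same computation — so any pair isomorphic to it is a $\DD$-pair.

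For part (ii), put $C:=C_{\langle s\rangle}(P)$, so $N\leqslant C$. The first step is to identify $C_{\langle sN\rangle}(PN/N)$: for $u\in P$ the element $({}^{s^i}u)u^{-1}$ lies in $P$, so the condition $({}^{s^i}u)u^{-1}\in N$ (which expresses that $s^iN$ centralises $uN$) amounts to $({}^{s^i}u)u^{-1}\in P\cap N=1$; hence $s^iN$ centralises $PN/N$ if and only if $s^i\in C$, and $C_{\langle sN\rangle}(PN/N)=C/N$, a normal $p'$-subgroup of $\Ps/N$. By definition $(\widetilde{PN/N},\widetilde{sN})$ is a representative of the isomorphism class of the pair $((PN/N)(C/N)/(C/N),\,(sN)(C/N))$. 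Since $N\leqslant C$ one has $(PN/N)(C/N)=PC/N$, and the third isomorphism theorem supplies a group isomorphism $(PC/N)/(C/N)\cong PC/C$ carrying $(sN)(C/N)$ to $sC$; so this pair is isomorphic to $(PC/C,sC)$, which by definition is $\simeq(\tilde P,\tilde s)$. The same chain of isomorphisms shows simultaneously that $(\widetilde{PN/N},\widetilde{sN})\simeq(\tilde P,\tilde s)$ and that $(\tilde P,\tilde s)$ is isomorphic to the $p'$-quotient of $(PN/N,sN)$ by the normal $p'$-subgroup $C/N$ of $\Ps/N$, which is the remaining assertion.

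The only genuinely nontrivial ingredient is the fact isolated in the first paragraph; everything after that is bookkeeping. The point where I would be most careful is checking that the ``canonical'' maps used above — the third isomorphism theorem isomorphism, the identification $PN/N\cong P$ — are isomorphisms \emph{of pairs} in the sense of the Definition, for which I would invoke the reformulation that $(A,b)\simeq(C,d)$ precisely when there is a group isomorphism $\langle Ab\rangle\to\langle Cd\rangle$ carrying $b$ to a conjugate of $d$, and that constructions such as $PN/N$ and $PC/C$ really are pairs in the required sense (a $p$-group with a generator of a $p'$-cyclic group acting on it), which holds because $P\cap N=1$. As an alternative, part (i) can be deduced from Propositions~\ref{inclusions}, \ref{evaluation} and \ref{uniqueminimal} together with the characterisation of $\DD$-pairs, but the direct route above is shorter and more self-contained.
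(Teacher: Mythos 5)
Your argument is correct, but it proves the theorem by a genuinely different route from the paper. The paper stays inside the functorial framework: for (i) it uses Proposition~\ref{inclusions} twice (a $p'$-quotient relation gives $\mathbf{e}_{\tilde{P},\tilde{s}}=\beps\leqslant\beqt$, and then the $\DD$-pair hypothesis on $(Q,t)$ forces $(Q,t)$ to be a $p'$-quotient of $(\tilde P,\tilde s)$, hence equal to it since $(\tilde P,\tilde s)$ is a $\DD$-pair), and for (ii) it uses the non-vanishing of the deflation constant $m_{P,s,N}$ (Lemma~\ref{defconstant}) to get $F^{\Ps/N}_{PN/N,sN}\in\beps\big(\Ps/N\big)$, then Proposition~\ref{inclusions} and part (i). You instead make everything rest on the elementary structural fact that $C_{\langle s\rangle}(P)$ is the largest normal $p'$-subgroup of $\Ps$ — a fact the paper asserts only in passing, without proof, at the start of the proof of Lemma~\ref{defconstant} — and then carry out the bookkeeping with the third isomorphism theorem, including the computation $C_{\langle sN\rangle}(PN/N)=C_{\langle s\rangle}(P)/N$ which the paper never writes down (it is implicit, e.g., in the unproved claim in Proposition~\ref{uniqueminimal} that $(PN/N,sN)$ is a $\DD$-pair for $N=C_{\langle s\rangle}(P)$). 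Both proofs are valid; yours is more self-contained and makes the underlying group theory explicit (it even supplies proofs of facts the paper takes for granted), at the cost of redoing by hand what Propositions~\ref{inclusions} and \ref{defconstant} already encode, while the paper's proof is shorter given the earlier machinery and keeps the statement visibly tied to the subfunctor lattice of $\FFTD$. The only places where you should spell things out a little more are the assertion that $\Ps/C_{\langle s\rangle}(P)$ has no nontrivial normal $p'$-subgroup (lift a normal $p'$-subgroup $M/C$ to $M\unlhd\Ps$, which is again a $p'$-group, hence contained in $C$), and the verification that the canonical third-isomorphism map carries $(sN)(C/N)$ to $sC$, so that it is an isomorphism of pairs in the sense of the reformulation lemma; both are routine and you already flag them.
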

\begin{proof}
(i) Since the pair $(Q,t)$ is isomorphic to a $p'$-quotient of the pair $(P,s)$, by Proposition \ref{inclusions}, we have $\mathbf{e}_{\tilde{P},\tilde{s}}=\beps \leqslant \beqt$. Since $(Q,t)$ is a $\DD$-pair, again by Proposition \ref{inclusions}, the pair $(Q,t)$ is isomorphic to a $p'$-quotient of $(\tilde{P},\tilde{s})$. But since the pair $(\tilde{P},\tilde{s})$ is a $\DD$-pair, it follows that the pair $(Q,t)$ is isomorphic to the pair $(\tilde{P},\tilde{s})$.

(ii) Since the constant $m_{P,s,N}$ is non-zero, we have $F^{\Ps/N}_{PN/N,sN}\in \beps\big(\Ps/N\big)=\textbf{e}_{\tilde{P},\tilde{s}}\big(\Ps/N\big)$. Therefore we have $\mathbf{e}_{PN/N,sN}\leqslant \textbf{e}_{\tilde{P},\tilde{s}}$ and since $(\tilde{P},\tilde{s})$ is a $\DD$-pair, by Proposition \ref{inclusions}, $(\tilde{P},\tilde{s})$ is isomorphic to a $p'$-quotient of $(PN/N,sN)$. Again since the pair $(\tilde{P},\tilde{s})$ is a $\DD$-pair, by part (i), it is isomorphic to the pair $(\widetilde{PN/N},\widetilde{sN})$. 
\end{proof}

Let $[\DD\textit{-pair}]$ denote a set of isomorphism classes of $\DD$-pairs. Then the subfunctor lattice of the functor $\FFTD$ is isomorphic to the lattice of subsets of the set $[\DD\textit{-pair}]$ ordered by inclusion.

\begin{theorem}\label{subfunctorlattice}
Let $\mathcal{S}$ be the lattice of subfunctors of $\FFTD$ ordered by inclusion of subfunctors. Let $\mathcal{T}$ be the lattice of subsets of $[\DD\textit{-pair}]$ ordered by inclusion of subsets. Then the map
\begin{align*}
\Theta : \mathcal{S} \to \mathcal{T}
\end{align*}
that sends a subfunctor $F$ to the set $\{(P,s)\in [\DD\textit{-pair}]: \beps \leqslant F\}$, is an isomorphism of lattices with inverse 
\begin{align*}
\Psi:\mathcal{T}\to \mathcal{S}
\end{align*}
that sends a subset $A$ to the functor $\sum_{(P,s)\in A} \beps$.
\end{theorem}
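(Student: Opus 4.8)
The plan is to show that $\Theta$ and $\Psi$ are mutually inverse order-preserving maps; since an order-preserving bijection between lattices whose inverse is also order-preserving is automatically a lattice isomorphism, this proves the theorem. That the two maps are well defined and monotone will be immediate: $\Theta(F)$ is a subset of $[\DD\textit{-pair}]$, $\Psi(A)=\sum_{(P,s)\in A}\beps$ is a subfunctor of $\FFTD$ (a sum of subfunctors), and if $F_1\leqslant F_2$ in $\mathcal{S}$ then $\Theta(F_1)\subseteq\Theta(F_2)$, while if $A_1\subseteq A_2$ in $\mathcal{T}$ then $\Psi(A_1)\leqslant\Psi(A_2)$. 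It remains to verify $\Psi\circ\Theta=\mathrm{id}_{\mathcal{S}}$ and $\Theta\circ\Psi=\mathrm{id}_{\mathcal{T}}$.

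First I would check $\Psi\circ\Theta=\mathrm{id}_{\mathcal{S}}$. Given a subfunctor $F$, Proposition \ref{sumofsubfunctors} yields $F=\sum_{\beps\leqslant F}\beps$, where $(P,s)$ ranges over \emph{all} pairs. The key observation is that this sum does not change if we restrict the index set to $\DD$-pairs: for an arbitrary pair $(P,s)$ one has $\beps=\mathbf{e}_{\tilde{P},\tilde{s}}$ by Proposition \ref{inclusions}(i) applied to the normal $p'$-subgroup $C_{\langle s\rangle}(P)\unlhd\Ps$, and $(\tilde{P},\tilde{s})$ is a $\DD$-pair. Hence the family of subfunctors $\{\beps:\beps\leqslant F\}$ coincides with $\{\mathbf{e}_{Q,t}:(Q,t)\in[\DD\textit{-pair}],\ \mathbf{e}_{Q,t}\leqslant F\}$, so $F=\sum_{(Q,t)\in\Theta(F)}\mathbf{e}_{Q,t}=\Psi(\Theta(F))$.

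Next I would check $\Theta\circ\Psi=\mathrm{id}_{\mathcal{T}}$. Fix $A\subseteq[\DD\textit{-pair}]$. The inclusion $A\subseteq\Theta(\Psi(A))$ is clear, since each $\beps$ with $(P,s)\in A$ lies inside $\Psi(A)=\sum_{(P,s)\in A}\beps$. For the converse, let $(Q,t)\in\Theta(\Psi(A))$, i.e. $(Q,t)$ is a $\DD$-pair with $\beqt\leqslant\sum_{(P,s)\in A}\beps$. By Proposition \ref{someofideals} there is some $(P,s)\in A$ with $\beqt\leqslant\beps$; since $(Q,t)$ is a $\DD$-pair, Proposition \ref{inclusions}(ii) shows that $(Q,t)$ is isomorphic to a $p'$-quotient of $(P,s)$. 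But $(P,s)$ is itself a $\DD$-pair, so by Lemma \ref{DDpair} the group $\Ps$ has no nontrivial normal $p'$-subgroup, whence its only $p'$-quotient up to isomorphism is $(P,s)$ itself. Therefore $(Q,t)\simeq(P,s)$, i.e. $(Q,t)=(P,s)$ in $[\DD\textit{-pair}]$, and $(Q,t)\in A$. Thus $\Theta(\Psi(A))=A$, and together with monotonicity this completes the proof.

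The only step requiring a little care is the reduction in the second paragraph, where the index set ``all pairs'' of Proposition \ref{sumofsubfunctors} is replaced by ``$\DD$-pairs'' via the identity $\beps=\mathbf{e}_{\tilde{P},\tilde{s}}$; once this is in place, everything else is a direct application of Propositions \ref{someofideals} and \ref{inclusions} together with Lemma \ref{DDpair}, so I do not anticipate a genuine obstacle.
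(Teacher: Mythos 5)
Your proposal is correct and follows essentially the same route as the paper: both directions are handled by Proposition \ref{sumofsubfunctors} together with the identity $\beps=\mathbf{e}_{\tilde{P},\tilde{s}}$ to reduce to $\DD$-pairs, and then Proposition \ref{someofideals} plus Proposition \ref{inclusions} (with Lemma \ref{DDpair}) to conclude $(Q,t)\simeq(P,s)$. Your explicit justification that a $\DD$-pair has no proper $p'$-quotient merely spells out what the paper leaves implicit, so there is no substantive difference.
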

\begin{proof}
We need to show that the maps $\Theta$ and $\Psi$ are inverse of each other. Let $F\in \mathcal{S}$ be a subfunctor. By Proposition \ref{sumofsubfunctors} we have
\[
F=\sum_{\substack{(P,s)\in \Gamma \\ \beps\leqslant F}} \beps
\]
where $\Gamma$ is a set of representatives of the isomorphism classes of pairs. But for any pair $(P,s)$ we have $\beps=\mathbf{e}_{\tilde{P},\tilde{s}}$ and $(\tilde{P},\tilde{s})$ is a $\DD$-pair. Therefore we have 
\[
F=\sum_{\substack{(P,s)\in [\DD\textit{-pair}] \\ \beps\leqslant F}}\beps.
\]
This shows that $\Psi(\Theta(F))=F$. 

Now let $A\in \mathcal{T}$ be a subset and let $(Q,t)\in \Theta(\Psi(A))$ be a $\DD$-pair. Then we have $\beqt \leqslant \sum_{(P,s)\in A}\beps$ and so by Proposition \ref{someofideals} this implies that we have $\beqt\leqslant\beps$ for some $(P,s)\in A$. Since both $(P,s)$ and $(Q,t)$ are $\DD$-pairs, it follows that $(P,s)\simeq (Q,t)$ and hence $(Q,t)\in A$. This shows that $\Theta(\Psi(A))\subseteq A$. The inclusion $A\subseteq \Theta(\Psi(A))$ is trivial. Therefore we have $\Theta(\Psi(A))=A$. 
\end{proof}

The following corollary follows immediately from Theorem \ref{subfunctorlattice}.

\begin{corollary}
We have $\FFTD=\bigoplus_{(P,s)\in [\DD\textit{-pair}]} \beps$.
\end{corollary}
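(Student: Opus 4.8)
The plan is to deduce the direct sum decomposition as a formal consequence of the lattice isomorphism in Theorem~\ref{subfunctorlattice}, together with the general fact that a lattice-preserving bijection between the subfunctor lattice of a functor and the Boolean lattice of subsets of an index set forces semisimplicity with the given decomposition.

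First I would argue that the subfunctors $\beps$ for $(P,s)\in[\DD\textit{-pair}]$ generate $\FFTD$: applying $\Theta$ to the top element $\FFTD$ of $\mathcal{S}$ gives the full set $[\DD\textit{-pair}]$, since every idempotent $F^G_{P,s}$ lies in $\FFTD(G)$ and hence (by Propositions~\ref{sumofsubfunctors} and~\ref{uniqueminimal}, replacing $(P,s)$ by $(\tilde P,\tilde s)$) $\beps\leqslant\FFTD$ for the corresponding $\DD$-pair. Applying $\Psi$ to $[\DD\textit{-pair}]$ then yields $\FFTD=\sum_{(P,s)\in[\DD\textit{-pair}]}\beps$. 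So the sum is exhausting; it remains to see that it is direct.

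For directness I would fix a $\DD$-pair $(Q,t)$ and show that $\beqt\cap\sum_{(P,s)\neq(Q,t)}\beps=\{0\}$, where the sum runs over $\DD$-pairs not isomorphic to $(Q,t)$. The intersection is a subfunctor, call it $F$; it is contained in $\beqt$, so $\Theta(F)\subseteq\Theta(\beqt)=\{(Q,t)\}$ by the second part of Proposition~\ref{inclusions} applied to the $\DD$-pair $(Q,t)$ (any $\DD$-pair $\beps\leqslant\beqt$ must be isomorphic to $(Q,t)$). On the other hand $F\leqslant\sum_{(P,s)\neq(Q,t)}\beps$, so by Proposition~\ref{someofideals} no $\beps$ with $(P,s)\simeq(Q,t)$ lies below this sum, hence $(Q,t)\notin\Theta(F)$; therefore $\Theta(F)=\emptyset$ and $F=\Psi(\emptyset)=\{0\}$ since $\Psi$ is the inverse of $\Theta$. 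As this holds for every $\DD$-pair $(Q,t)$, the sum $\sum_{(P,s)\in[\DD\textit{-pair}]}\beps$ is direct, giving $\FFTD=\bigoplus_{(P,s)\in[\DD\textit{-pair}]}\beps$.

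I do not expect any genuine obstacle here: the corollary is purely a repackaging of Theorem~\ref{subfunctorlattice}. The only point requiring a little care is the bookkeeping that the internal direct sum is taken over a \emph{set of representatives} of isomorphism classes of $\DD$-pairs (so that each summand appears once), and that the ``$\cap$ with the complementary sum is zero'' characterization of directness is correctly translated through the Boolean lattice $\mathcal{T}$, where meet is intersection and the bottom element $\emptyset$ corresponds to the zero functor.
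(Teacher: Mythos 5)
Your argument is correct and is essentially the paper's own route: the paper derives this corollary immediately from Theorem~\ref{subfunctorlattice}, and your write-up simply unpacks that deduction (exhaustion via $\Psi(\Theta(\FFTD))=\FFTD$, directness via $\Theta(F)=\emptyset\Rightarrow F=\Psi(\emptyset)=0$ for the intersection $F$ of $\beqt$ with the complementary sum). No gaps; the bookkeeping over representatives of isomorphism classes of $\DD$-pairs is handled as the paper intends.
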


The first statement of Proposition \ref{evaluation} can also be made stronger.
\begin{corollary}
Let $F$ be a nonzero subfunctor of $\FFTD$. If $H$ is a minimal group of~$F$, then $H=\Qt$ for some $\DD$-pair $(Q,t)$ and we have 
\begin{align*}
F(H)=\bigoplus_{\substack{(Q',t')\simeq (Q,t)\\ \langle Q't'\rangle=\Qt}} \FF F^H_{Q',t'}.
\end{align*}
\end{corollary}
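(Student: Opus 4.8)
The plan is to combine Proposition~\ref{evaluation} with the observation that, for any subfunctor $F$ of $\FFTD$, the evaluation $F(H)$ is an ideal of the split semisimple commutative algebra $\FFTD(H)=\FFT(H)$, whose primitive idempotents are the $F^H_{R,r}$ with $(R,r)\in[\mathcal{Q}_{H,p}]$. Hence $F(H)$ is automatically the $\FF$-span of the primitive idempotents it contains, and everything reduces to identifying those idempotents.

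First I would record what Proposition~\ref{evaluation} already gives at a minimal group $H$ of $F$: namely $H=\Qt$ for some $\DD$-pair $(Q,t)$, the inclusion $\beqt\leqslant F$, and
\[
F(H)\ \leqslant\ \bigoplus_{\substack{(Q',t')\ \DD\text{-pair}\\ \langle Q't'\rangle=H}}\FF\,F^H_{Q',t'}.
\]
The inclusion $\supseteq$ of the asserted equality is then immediate: $\beqt\leqslant F$ forces $\beqt(H)\leqslant F(H)$, and the last part of Proposition~\ref{evaluation}, applied with $\Qt=H$, identifies $\beqt(H)$ with $\bigoplus_{(Q',t')\simeq(Q,t),\,\langle Q't'\rangle=H}\FF F^H_{Q',t'}$.

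For the reverse inclusion, take any primitive idempotent occurring in $F(H)$; by the containment above it is $F^H_{Q',t'}$ for a $\DD$-pair $(Q',t')$ with $\langle Q't'\rangle=H$. As $\langle Q't'\rangle=H$, this idempotent is the generator of $\mathbf{e}_{Q',t'}$ and lies in $F\big(\langle Q't'\rangle\big)$, so $\mathbf{e}_{Q',t'}\leqslant F$, and $H$ is the unique minimal group of $\mathbf{e}_{Q',t'}$ by Proposition~\ref{uniqueminimal}. I would then prove that $(Q',t')\simeq(Q,t)$; granting this, Proposition~\ref{evaluation} applied to $\mathbf{e}_{Q',t'}$ places $F^H_{Q',t'}$ inside $\beqt(H)$, so every primitive idempotent of $F(H)$ lies in $\beqt(H)$, whence $F(H)\leqslant\beqt(H)$ and the two inclusions give the claim.

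The step $(Q',t')\simeq(Q,t)$ is where I expect the real difficulty, since a priori $F$ could contain $\mathbf{e}_{Q,t}$ for several pairwise non-isomorphic $\DD$-pairs sharing the underlying group $H$, and one must rule this out using that $H$ is minimal for $F$. Concretely I would write $F=\sum_{(P,s)\in A}\beps$ for a set $A$ of $\DD$-pairs (Theorem~\ref{subfunctorlattice} and its corollary); then any $(P,s)\in A$ with $\beps(H)\neq\{0\}$ is, by the characterisation of $\DD$-pairs and a comparison of orders, isomorphic to a pair contained in $H$ whose underlying group is all of $H$; applying this to both $(Q,t)$ and $(Q',t')$ and matching the resulting pairs inside $H$ via Lemma~\ref{equiv2} and Proposition~\ref{inclusions} should force $(Q',t')\simeq(Q,t)$. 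The remaining bookkeeping --- the ideal structure of $F(H)$, the inclusion $\supseteq$, and deducing the direct-sum decomposition --- is routine.
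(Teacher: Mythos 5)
Your reduction is sound and, up to the point you flag, coincides with the paper's own argument: $F(H)$ is an ideal of the split semisimple algebra $\FFT(H)$, hence spanned by the primitive idempotents it contains; Proposition~\ref{evaluation} gives $H=\Qt$ with $(Q,t)$ a $\DD$-pair and $\beqt\leqslant F$, and the inclusion $\supseteq$ follows from its last part. The genuine gap is exactly the step you single out, namely that every primitive idempotent $F^H_{Q',t'}$ occurring in $F(H)$ has $(Q',t')\simeq(Q,t)$, and the route you sketch cannot close it: minimality of $H$ together with Lemma~\ref{equiv2} and Proposition~\ref{generators} only shows that each $\DD$-pair $(P,s)$ with $\beps\leqslant F$ and $\beps(H)\neq\{0\}$ is isomorphic to a pair contained in $H$ that generates $H$; it does not make all such pairs isomorphic to one another, because non-isomorphic $\DD$-pairs can have the same underlying group. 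Concretely, take $p=5$ and let $\sigma$ be a generator of $\Aut(C_5)\cong C_4$. Then $(C_5,\sigma)$ and $(C_5,\sigma^{-1})$ are $\DD$-pairs with $\langle C_5\sigma\rangle=\langle C_5\sigma^{-1}\rangle\cong C_5\rtimes C_4$, and they are not isomorphic: since $C_5$ is abelian, an isomorphism of pairs would force $c_\sigma$ and $c_{\sigma^{-1}}$ to be conjugate in the abelian group $\Aut(C_5)$, hence equal. For $F=\mathbf{e}_{C_5,\sigma}+\mathbf{e}_{C_5,\sigma^{-1}}$ the group $H=C_5\rtimes C_4$ is a minimal group of $F$, yet $F(H)$ contains the two distinct primitive idempotents $F^H_{C_5,\sigma}$ and $F^H_{C_5,\sigma^{-1}}$, so no single isomorphism class $(Q,t)$ can produce the asserted equality. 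So the step ``$(Q',t')\simeq(Q,t)$'' is not merely difficult; without a further hypothesis on $F$ it fails.

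For comparison, the paper's proof uses the same ingredients, writing $F=\sum_{\beqt\leqslant F}\beqt$ by Theorem~\ref{subfunctorlattice} and then passing directly to $F(H)=\beqt(H)$; the point you isolate is precisely what that passage leaves implicit, and your analysis shows it cannot be taken for granted. What your steps (and the paper's) do establish is the weaker, correct statement that $F(H)$ is the direct sum of the spaces $\mathbf{e}_{Q_1,t_1}(H)$, taken over the isomorphism classes of $\DD$-pairs $(Q_1,t_1)$ with $\mathbf{e}_{Q_1,t_1}\leqslant F$ that are realized in $H$ as generating pairs, each summand being described by the last part of Proposition~\ref{evaluation}. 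The one-class formula of the corollary is valid exactly when there is a unique such class, for instance when $F=\beqt$ is one of the simple summands of $\FFTD$; to have a complete proof you must either add such a uniqueness hypothesis or prove the multi-class version instead.
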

\begin{proof}
Since $H$ is a minimal group of $F$, by Proposition \ref{evaluation} it follows that $H=\Qt$ for some $\DD$-pair with the property that $\beqt\leqslant F$. By Theorem \ref{subfunctorlattice} we have 
\[
F=\sum_{\substack{(Q,t)\in [\DD\textit{-pair}] \\ \beqt\leqslant F}}\beqt.
\]
Therefore by Proposition \ref{evaluation} again we have
\begin{align*}
F(H)=\beqt(H)= \bigoplus_{\substack{(Q',t')\simeq (Q,t)\\ \langle Q't'\rangle=\Qt}} \FF F^H_{Q',t'}
\end{align*}
as desired.
\end{proof}

\begin{corollary}
Let $(P,s)$ be a $\DD$-pair. Then the subfunctor $\beps$ of $\FFTD$ is isomorphic to the simple functor $S_{\Ps,W_{P,s}}$ where $W_{P,s}=\oplus_{\substack{(Q,t)\simeq (P,s)\\ \Qt=\Ps}} \FF F^{\Ps}_{P,s}$.
\end{corollary}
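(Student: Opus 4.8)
The plan is to obtain the statement by assembling results already established above: the parametrization of simple functors by pairs (finite group, simple module over the essential algebra) recalled at the beginning of Section~5, the description of the subfunctor lattice of $\FFTD$ in Theorem~\ref{subfunctorlattice}, the determination of the minimal group of $\beps$ in Proposition~\ref{uniqueminimal}, and the evaluation formula of Proposition~\ref{evaluation}. The first step is to check that $\beps$ is a \emph{simple} functor. As $(P,s)$ is a $\DD$-pair we have $(\tilde P,\tilde s)\simeq(P,s)$, so under the lattice isomorphism $\Theta$ of Theorem~\ref{subfunctorlattice} the subfunctor $\beps$ corresponds to the singleton $\{(P,s)\}\subseteq[\DD\textit{-pair}]$. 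Every subfunctor of $\beps$ is a subfunctor of $\FFTD$ contained in $\beps$, hence corresponds to a subset of $\{(P,s)\}$, so it is $0$ or $\beps$; thus $\beps$ has no proper nonzero subfunctor and is simple.

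The second step is to identify the pair labelling this simple functor. Since $(P,s)$ is a $\DD$-pair, $C_{\langle s\rangle}(P)=1$, so $G:=\Ps$ equals $\Ps/C_{\langle s\rangle}(P)$, which by Proposition~\ref{uniqueminimal} is the unique (up to isomorphism) minimal group of $\beps$; in particular $\mathcal{E}^{\Delta}(G)\neq 0$ by Theorem~\ref{essentialfinal}. Applying Proposition~\ref{evaluation} to the $\DD$-pair $(P,s)$ yields
\[
\beps(G)=\bigoplus_{\substack{(Q,t)\simeq(P,s)\\ \langle Qt\rangle=G}}\FF\,F^G_{Q,t},
\]
and this is the space $W_{P,s}$ of the statement (where the summand is to be read as $F^{\Ps}_{Q,t}$).

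The third step is to run the standard reconstruction of a simple functor from its minimal group. Because $\beps$ vanishes on all groups of order strictly less than $|G|$, any element of the ideal of $\End_{\FF pp_k^\Delta}(G)$ generated by morphisms factoring through smaller groups annihilates $\beps(G)$; hence $W_{P,s}=\beps(G)$ is naturally a module over $\mathcal{E}^{\Delta}(G)$. The counit $L_{G,W_{P,s}}\to\beps$, $\phi\otimes v\mapsto\phi\cdot v$, is a morphism of functors whose image is nonzero, hence (by simplicity of $\beps$) all of $\beps$, and whose kernel is the maximal subfunctor $J_{G,W_{P,s}}$; therefore $\beps\cong L_{G,W_{P,s}}/J_{G,W_{P,s}}=S_{G,W_{P,s}}$, and as a byproduct $W_{P,s}$ is a simple $\mathcal{E}^{\Delta}(G)$-module. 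I expect the only genuinely delicate point to be this last step---that a simple functor with minimal group $G$ is recovered as $S_{G,S(G)}$ with $S(G)$ simple over the essential algebra, equivalently that the above counit is surjective with kernel exactly $J_{G,W_{P,s}}$. Rather than reprove it I would cite \cite{Boucsimple}, where this reconstruction is established for $\FF$-linear functor categories of precisely this type; everything else is the bookkeeping indicated above.
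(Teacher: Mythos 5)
Your argument is correct and follows essentially the same route as the paper: simplicity of $\beps$ from the lattice isomorphism of Theorem~\ref{subfunctorlattice}, identification of $\Ps$ as the minimal group via Proposition~\ref{uniqueminimal}, computation of $\beps\big(\Ps\big)=W_{P,s}$ via Proposition~\ref{evaluation}, and then the classification of simple functors. The only (harmless) difference is that the paper quotes \cite[Theorem 4.2.5]{MD} for the simplicity of $W_{P,s}$ as an $\mathcal{E}^{\Delta}\big(\Ps\big)$-module, whereas you obtain it as a byproduct of the reconstruction result in \cite{Boucsimple}, which the paper also relies on for the final identification $\beps\simeq S_{\Ps,W_{P,s}}$.
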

\begin{proof}
By Theorem \ref{subfunctorlattice} the lattice of subfunctors of $\beps$ is isomorphic to the lattice of subsets of the set $\Theta(\beps)=\{(Q,t)\in[\DD\textit{-pair}]: \beqt\leqslant\beps\}=\{(P,s)\}$. Therefore the subfunctor $\beps$ is simple. By Proposition \ref{uniqueminimal} the group $\Ps$ is a minimal group of the functor $\beps$. By Proposition \ref{evaluation} we have $\beps\big(\Ps\big) =W_{P,s}$. Moreover, by \cite[Theorem 4.2.5]{MD}, the module $W_{P,s}$ is a simple module for the essential algebra $\mathcal{E}^{\Delta}\big(\Ps\big)$. Thus we have $\beps\simeq S_{\Ps,W_{P,s}}$ as desired. 
\end{proof}

\begin{prop}\label{simplequotient}
If $F\leqslant F'$ are subfunctors of $\FFTD$ such that $F'/F$ is simple, then there exists a unique $\DD$-pair $(P,s)\in [\DD\textit{-pair}]$ such that $\beps \leqslant F'$ and $\beps \nleqslant F$. In particular, we have $\beps + F=F'$, $\beps\cap F=\{0\}$, and $F'/F\simeq S_{\Ps,W_{P,s}}$
\end{prop}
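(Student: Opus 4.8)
The plan is to transport the whole statement across the lattice isomorphism $\Theta\colon\mathcal{S}\to\mathcal{T}$ of Theorem~\ref{subfunctorlattice}, where $\mathcal{S}$ is the lattice of subfunctors of $\FFTD$, $\mathcal{T}$ is the lattice of subsets of $[\DD\textit{-pair}]$, and $\Psi$ is the inverse of $\Theta$. Put $A:=\Theta(F)$ and $A':=\Theta(F')$; since $F\leqslant F'$ one has $A\subseteq A'$.

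First I would reinterpret the hypothesis. The subfunctors of the quotient $F'/F$ correspond bijectively and order-isomorphically to the subfunctors $F''$ with $F\leqslant F''\leqslant F'$ (the correspondence theorem, which here is checked pointwise). Since $\Theta$ is a lattice isomorphism, it restricts to an isomorphism from the interval $[F,F']$ in $\mathcal{S}$ onto the interval $[A,A']$ in $\mathcal{T}$. Hence ``$F'/F$ is simple'' is equivalent to ``$[A,A']$ has exactly two elements'', and, $\mathcal{T}$ being the Boolean lattice of all subsets of $[\DD\textit{-pair}]$, this is in turn equivalent to the existence of a unique $\DD$-pair $(P,s)\notin A$ with $A'=A\cup\{(P,s)\}$.

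From this point on the conclusions are direct transcriptions. By the definition of $\Theta$, the relation $(P,s)\in A'$ says precisely that $\beps\leqslant F'$, and $(P,s)\notin A$ says precisely that $\beps\nleqslant F$; moreover any $\DD$-pair $(Q,t)$ with $\beqt\leqslant F'$ and $\beqt\nleqslant F$ lies in $A'\setminus A=\{(P,s)\}$, whence $(Q,t)=(P,s)$ in $[\DD\textit{-pair}]$, giving uniqueness. For the three ``in particular'' claims I would use that $\Theta$ preserves joins and meets, together with the equality $\Theta(\beps)=\{(P,s)\}$ that was already recorded in the proof of the corollary identifying $\beps$ with $S_{\Ps,W_{P,s}}$: then $\Theta(\beps+F)=\{(P,s)\}\cup A=A'=\Theta(F')$ forces $\beps+F=F'$ upon applying $\Psi$, and $\Theta(\beps\cap F)=\{(P,s)\}\cap A=\emptyset$ forces $\beps\cap F=\Psi(\emptyset)=\{0\}$. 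Finally the second isomorphism theorem in the abelian category $\mathcal{F}_{pp_k}^\Delta$ yields $F'/F=(\beps+F)/F\simeq\beps/(\beps\cap F)\simeq\beps$, and $\beps\simeq S_{\Ps,W_{P,s}}$ by that same corollary.

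The only step that needs genuine care is the equivalence in the second paragraph between ``$F'/F$ simple'' and ``$A'\setminus A$ is a singleton''; once the correspondence theorem for subfunctors and the restriction of $\Theta$ to intervals are made explicit, everything else is bookkeeping with $\Theta$ and $\Psi$ plus a single application of the second isomorphism theorem, and no computation beyond Theorem~\ref{subfunctorlattice} and the preceding corollary is required.
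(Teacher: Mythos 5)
Your argument is correct and follows essentially the same route as the paper: both rest on the lattice isomorphism of Theorem~\ref{subfunctorlattice}, the fact (recorded in the preceding corollary) that $\Theta(\beps)=\{(P,s)\}$ with $\beps\simeq S_{\Ps,W_{P,s}}$, and the second isomorphism theorem. The only cosmetic difference is that you obtain $\beps+F=F'$ and $\beps\cap F=\{0\}$ by transporting joins and meets through $\Theta$ on the Boolean lattice side, whereas the paper deduces them directly from the simplicity of $F'/F$ and of $\beps$; the underlying content is identical.
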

\begin{proof}
The existence of a pair $(P,s)$ with the property that $\beps\leqslant F'$ and $\beps \nleqslant F$ is clear. Suppose $(P',s')$ is another pair with these properties. Since $F'/F$ is simple, we have
\[
(P',s')\in \Theta(F) \cup \{(P,s)\}.
\]
Thus $(P',s')\simeq (P,s)$ as $(P',s')\notin \Theta(F)$. Now since $\beps \nleqslant F$ and $F'/F$ is simple, we have $\beps + F=F'$. Thus the quotient $\beps/(\beps\cap F)\simeq F'/F$ is simple and so $\beps\cap F=\{0\}$. Therefore we have $F'/F\simeq S_{\Ps,W_{P,s}}$. 
\end{proof}

\begin{prop}
Let $F\leqslant F'$ be subfunctors of $\FFTD$ such that $F'/F$ is simple. Let $H$ (respectively $H'$) be a finite group and $W$ (respectively $W'$) be a simple $\mathcal{E}^{\Delta}(H)$-mod (respectively  $\mathcal{E}^{\Delta}(H')$-mod) such that $S_{H,W}\simeq S_{H',W'}\simeq F'/F$. Then $H\cong H'$. Moreover $W\cong W'$, after identification of $H$ and $H'$ via the previous isomorphism.
\end{prop}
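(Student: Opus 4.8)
The plan is to reduce everything to the classification of subquotients of $\FFTD$ already obtained above. First I would invoke Proposition~\ref{simplequotient}: since $F'/F$ is a simple subquotient of $\FFTD$, there is a unique $\DD$-pair $(P,s)\in[\DD\textit{-pair}]$ with $F'/F\simeq S_{\Ps,W_{P,s}}=\beps$. In particular both $S_{H,W}$ and $S_{H',W'}$ are isomorphic, as objects of $\calF_{pp_k}^\Delta$, to this single functor $\beps$, so it suffices to prove that $H\cong\Ps$ with $W\cong W_{P,s}$ under a suitable identification, and likewise for $(H',W')$; the assertion then follows.

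Next I would bring in the standard structural fact about simple functors from \cite{Boucsimple}: for any finite group $G$ and any simple $\mathcal{E}^{\Delta}(G)$-module $V$, the group $G$ is a minimal group of $S_{G,V}$, and $S_{G,V}(G)\cong V$ as $\mathcal{E}^{\Delta}(G)$-modules. If one prefers a self-contained argument, this comes straight from the definitions of $L_{G,V}$ and $J_{G,V}$ together with the canonical surjection $\FFT^{\Delta}(G,G)\twoheadrightarrow\mathcal{E}^{\Delta}(G)$: evaluating at $G$ and using that $J_{G,V}(G)$ is exactly the kernel of the resulting map onto $V$ gives $S_{G,V}(G)\cong V$, while the fact that any morphism factoring through a group of smaller order lies in the defining ideal of $\mathcal{E}^{\Delta}(G)$ forces $S_{G,V}(K)=0$ for $|K|<|G|$. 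I would apply this to $(H,W)$ and to $(H',W')$.

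Now I would compare minimal groups. On one hand, $H$ is a minimal group of $S_{H,W}$ and $H'$ is a minimal group of $S_{H',W'}$. On the other hand, by Proposition~\ref{uniqueminimal}, $\Ps$ is, up to isomorphism, the unique minimal group of $\beps$. Since being a minimal group is an isomorphism invariant of a functor and $S_{H,W}\simeq\beps\simeq S_{H',W'}$, this forces $H\cong\Ps\cong H'$; I would fix an isomorphism $\alpha\colon H\to H'$ obtained by composing these identifications. Finally, evaluating a chosen functor isomorphism $S_{H,W}\xrightarrow{\ \sim\ }S_{H',W'}$ at $H$ and transporting the $\mathcal{E}^{\Delta}(H')$-action back to $\mathcal{E}^{\Delta}(H)$ along $\alpha$, the isomorphism $S_{H,W}(H)\cong S_{H',W'}(H')$ becomes an isomorphism $W\cong W'$ of $\mathcal{E}^{\Delta}(H)$-modules after identification of $H$ and $H'$ via $\alpha$, which is precisely the claim.

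The only delicate point is the input quoted in the second paragraph — that the minimal group of $S_{G,V}$ is $G$ and that its evaluation there recovers $V$. Everything after that is formal transport of structure. If we are content to cite \cite{Boucsimple} for the full classification of simple diagonal $p$-permutation functors (two pairs $(G,V)$ and $(G',V')$ give isomorphic simple functors exactly when there is an isomorphism $G\to G'$ carrying $V$ to $V'$), the statement is immediate; otherwise the bulk of the work is re-deriving this minimality/evaluation fact from the definitions of $L_{G,V}$, $J_{G,V}$ and the essential quotient.
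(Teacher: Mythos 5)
Your argument is correct, and it reaches the conclusion through a slightly different chain of the paper's own results. The paper's proof, after invoking Proposition~\ref{simplequotient} for the unique $\DD$-pair $(P,s)$, works concretely inside the evaluations: it picks a primitive idempotent $F^H_{Q,t}\in F'(H)\setminus F(H)$, uses restriction/deflation and minimality of $H$ for $F'/F$ to show $H=\langle Qt\rangle$ with $(Q,t)$ a $\DD$-pair satisfying $\mathbf{e}_{Q,t}\leqslant F'$, $\mathbf{e}_{Q,t}\nleqslant F$, and then the uniqueness clause of Proposition~\ref{simplequotient} forces $(Q,t)\simeq(P,s)$, hence $H\cong\langle Ps\rangle$. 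You instead combine Proposition~\ref{simplequotient} with the corollary $\mathbf{e}_{P,s}\simeq S_{\langle Ps\rangle,W_{P,s}}$ to get $F'/F\simeq\mathbf{e}_{P,s}$, and then appeal to Proposition~\ref{uniqueminimal} (uniqueness up to isomorphism of the minimal group of $\mathbf{e}_{P,s}$, which is $\langle Ps\rangle$ since $(P,s)$ is a $\DD$-pair) together with the fact that minimal groups are invariants of the isomorphism class of a functor. Both routes rest on the same external input from \cite{Boucsimple}, namely that $S_{G,V}(K)=0$ for $|K|<|G|$ and $S_{G,V}(G)\cong V$ --- the paper uses this implicitly when it asserts that $H$ is a minimal group of $F'/F$ and in its last sentence, and you use it explicitly (and your sketch of its derivation from $L_{G,V}$, $J_{G,V}$ and the essential quotient is sound). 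Your version is a bit more formal and avoids redoing the idempotent manipulation, at the cost of leaning on Proposition~\ref{uniqueminimal} and on the transport-of-structure step at the end, which you handle at the same level of detail as the paper does.
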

\begin{proof}
By Proposition \ref{simplequotient} there exists a unique $\DD$-pair $(P,s)$ such that $F'/F\simeq S_{\Ps,W_{P,s}}$. Therefore it suffices to prove that $H\cong \Ps$. Since $(F'/F)(H)\neq 0$ there exists a pair $(Q,t)$ contained in $H$ such that $F^H_{Q,t}\in F'(H)\setminus F(H)$. Since $H$ is a minimal group of $F'/F$, it follows that $H=\Qt$ and $(Q,t)$ is a $\DD$-pair. Moreover we have $\beqt\leqslant F'$ and $\beqt\nleqslant F$. But the pair $(P,s)$ is the unique $\DD$-pair with these properties. Therefore we have $(Q,t)\simeq (P,s)$. Thus $H\cong \Ps$ as desired. The last assertion follows from the fact that $S_{H,W}(H)\cong W$.
\end{proof}

\begin{prop}\label{generators}
Let $(P,s)$ be a pair. Then for any finite group $H$, the $\FF$-vector space $\beps(H)$ is the subspace of $\FFT(H)$ generated by the set of primitive idempotents $F^H_{Q,t}$ where $(Q,t)$ runs over a set of conjugacy classes of pairs in $H$ with the property that $(P,s)$ is isomorphic to a $p'$-quotient of $(Q,t)$.
\end{prop}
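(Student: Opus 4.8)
\emph{Sketch of the argument.} The plan is to reduce the statement to a membership criterion for the primitive idempotents of $\FFT(H)$ and then to settle that criterion using restriction, induction, and the reduction of $\beps$ to a $\DD$-pair. Since a subfunctor of $\FFTD$ evaluates to an ideal, $\beps(H)$ is an ideal of the split semisimple commutative $\FF$-algebra $\FFT(H)=\bigoplus_{(Q,t)\in[\mathcal{Q}_{H,p}]}\FF F^H_{Q,t}$, hence equals $\bigoplus_{(Q,t)\in S}\FF F^H_{Q,t}$ with $S=\{(Q,t)\in[\mathcal{Q}_{H,p}]:F^H_{Q,t}\in\beps(H)\}$. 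So it suffices to prove that $F^H_{Q,t}\in\beps(H)$ if and only if $(P,s)$ is isomorphic to a $p'$-quotient of $(Q,t)$.

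First I would establish the equivalence $F^H_{Q,t}\in\beps(H)\iff\beqt\leqslant\beps$. For the implication from right to left, evaluate at $\Qt$: $F^{\Qt}_{Q,t}\in\beqt\big(\Qt\big)\subseteq\beps\big(\Qt\big)$, and then apply the morphism $\Ind^H_{\langle Qt\rangle}$ of $\FF pp_k^\Delta$; by Lemma \ref{biset action}(ii) this sends $F^{\Qt}_{Q,t}$ to a nonzero scalar multiple of $F^H_{Q,t}$, so $F^H_{Q,t}\in\beps(H)$. For the converse, apply $\Res^H_{\langle Qt\rangle}$ to $F^H_{Q,t}\in\beps(H)$; by Lemma \ref{biset action}(i) the result is a sum of distinct primitive idempotents of $\FFT\big(\Qt\big)$ among which $F^{\Qt}_{Q,t}$ occurs, and multiplying this element of the ideal $\beps\big(\Qt\big)$ by the primitive idempotent $F^{\Qt}_{Q,t}$ isolates $F^{\Qt}_{Q,t}$, so $F^{\Qt}_{Q,t}\in\beps\big(\Qt\big)$ and hence the subfunctor $\beqt$ it generates is contained in $\beps$. (Restriction and induction along a subgroup are used here as morphisms of $\FF pp_k^\Delta$, exactly as in the proof of Lemma \ref{equiv2}.)

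Next I would translate $\beqt\leqslant\beps$ into the stated condition on pairs. Writing $(\tilde{P},\tilde{s})$, $(\tilde{Q},\tilde{t})$ for the associated $\DD$-pairs, Proposition \ref{uniqueminimal} together with Proposition \ref{inclusions}(i) gives $\beps=\mathbf{e}_{\tilde{P},\tilde{s}}$ and $\beqt=\mathbf{e}_{\tilde{Q},\tilde{t}}$, so the inclusion becomes $\mathbf{e}_{\tilde{Q},\tilde{t}}\leqslant\mathbf{e}_{\tilde{P},\tilde{s}}$. Since $(\tilde{P},\tilde{s})$ is a $\DD$-pair, Proposition \ref{inclusions}(ii) shows $(\tilde{P},\tilde{s})$ is isomorphic to a $p'$-quotient of $(\tilde{Q},\tilde{t})$, and as $(\tilde{Q},\tilde{t})$ has no nontrivial normal $p'$-subgroup this forces $(\tilde{P},\tilde{s})\simeq(\tilde{Q},\tilde{t})$; since $(\tilde{Q},\tilde{t})$ is by construction a $p'$-quotient of $(Q,t)$, we conclude that $(P,s)$ (passed through its $\DD$-pair) is isomorphic to a $p'$-quotient of $(Q,t)$. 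For the reverse implication, if $(P,s)\simeq(QN/N,tN)$ via $\phi$ for a normal $p'$-subgroup $N\unlhd\Qt$, then Proposition \ref{inclusions}(i) gives $\beps=\beqt$; more explicitly, the composite $\Ind^H_{\langle Qt\rangle}\circ\Inf^{\langle Qt\rangle}_{\langle Qt\rangle/N}\circ\Isom(\phi^{-1})$ is a morphism of $\FF pp_k^\Delta$ (inflation along a $p'$-subgroup is diagonal), it carries $F^{\Ps}_{P,s}$ to an element of $\beps(H)$ whose $F^H_{Q,t}$-component is nonzero by Lemma \ref{biset action}, and the ideal property again yields $F^H_{Q,t}\in\beps(H)$.

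The step I expect to be the main obstacle is this last translation. Since $\beps$ depends on $(P,s)$ only through $(\tilde{P},\tilde{s})$, the combinatorial condition is genuinely a condition relating $(\tilde{Q},\tilde{t})$ to $(\tilde{P},\tilde{s})$, and it is Theorem \ref{long} (invariance of the associated $\DD$-pair under passage to $p'$-quotients, and uniqueness of the $\DD$-pair in a $p'$-quotient class) that lets one rephrase "$(\tilde{P},\tilde{s})\simeq(\tilde{Q},\tilde{t})$" as "$(P,s)$ is isomorphic to a $p'$-quotient of $(Q,t)$". The cleanest way to organize the write-up is to note from the start that $\beps=\mathbf{e}_{\tilde{P},\tilde{s}}$ and reduce to the case where $(P,s)$ is already a $\DD$-pair, in which case the above goes through without this extra bookkeeping.
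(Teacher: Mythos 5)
Your argument is correct and follows essentially the same route as the paper: the ideal property of $\beps(H)$ reduces everything to deciding when $F^H_{Q,t}\in\beps(H)$, the restriction/induction formulas of Lemma~\ref{biset action} give the equivalence $F^H_{Q,t}\in\beps(H)\iff\beqt\leqslant\beps$, and Proposition~\ref{inclusions} translates this inclusion into the $p'$-quotient condition after reducing, via $\beps=\mathbf{e}_{\tilde P,\tilde s}$, to the case where $(P,s)$ is a $\DD$-pair. The paper's proof is a terser version of exactly these steps, performing the reduction to a $\DD$-pair at the outset just as you propose in your closing remark.
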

\begin{proof}
Since the pair $(\tilde{P},\tilde{s})$ is isomorphic to a $p'$-quotient of the pair $(P,s)$ and since $\beps =\textbf{e}_{\tilde{P},\tilde{s}}$, we may assume that the pair $(P,s)$ is a $\DD$-pair. Since $\beps(H)$ is an ideal of $\FFT(H)$, it has a $\FF$-basis consisting of a set of primitive idempotents $F^H_{Q,t}$. If $F^H_{Q,t}\in \beps(H)$, then $F^{\Qt}_{Q,t}\in \beps\big(\Qt\big)$ and so $\beqt\leqslant \beps$. Since $(P,s)$ is a $\DD$-pair, by Proposition \ref{inclusions}, it is isomorphic to a $p'$-quotient of the pair $(Q,t)$. Conversely, if $(P,s)$ is isomorphic to a $p'$-quotient of the pair $(Q,t)$, then again by Proposition \ref{inclusions}, we have $\beqt\leqslant \beps$. So we have $F^{\Qt}_{Q,t}\in \beps\big(\Qt\big)$ and hence $F^H_{Q,t}\in \beps(H)$. The result follows. 
\end{proof}

\begin{theorem}\label{dimension}
Let $(P,s)$ be a $\DD$-pair. Then for any finite group $H$, the $\FF$-dimension of $S_{\Ps,W_{P,s}}(H)$ is equal to the number of conjugacy classes of pairs $(Q,t)$ in $H$ such that $(\tilde{Q},\tilde{t})\simeq (P,s)$.
\end{theorem}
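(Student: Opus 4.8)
The plan is to reduce the statement to the description of $\beps(H)$ obtained in Proposition~\ref{generators}. First I would invoke the corollary which identifies, for a $\DD$-pair $(P,s)$, the subfunctor $\beps$ of $\FFTD$ with the simple functor $S_{\Ps,W_{P,s}}$; thus $S_{\Ps,W_{P,s}}(H)=\beps(H)$, and it suffices to compute $\dim_\FF\beps(H)$. By the lemma asserting that $\beps(H)$ is an ideal of the algebra $\FFT(H)$, together with the fact that $\FFT(H)$ is split semisimple with the primitive idempotents $F^H_{Q,t}$, $(Q,t)\in[\mathcal Q_{H,p}]$, as an $\FF$-basis, the ideal $\beps(H)$ has as an $\FF$-basis exactly the set of those $F^H_{Q,t}$ that it contains. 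Proposition~\ref{generators} identifies this set: it is $\{F^H_{Q,t}\}$, where $(Q,t)$ runs over a set of representatives of the $H$-conjugacy classes of pairs in $H$ for which $(P,s)$ is isomorphic to a $p'$-quotient of $(Q,t)$. Hence $\dim_\FF S_{\Ps,W_{P,s}}(H)$ equals the number of such conjugacy classes.

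It then remains to match this count with the one in the statement, i.e.\ to show that for a pair $(Q,t)$ in $H$ one has $(\tilde Q,\tilde t)\simeq(P,s)$ if and only if $(P,s)$ is isomorphic to a $p'$-quotient of $(Q,t)$, using crucially that $(P,s)$ is a $\DD$-pair. For the implication from left to right I would simply recall that, by definition, $(\tilde Q,\tilde t)$ represents the isomorphism class of the pair $\big(QC_{\langle t\rangle}(Q)/C_{\langle t\rangle}(Q),\,tC_{\langle t\rangle}(Q)\big)$, which is a $p'$-quotient of $(Q,t)$; hence $(\tilde Q,\tilde t)\simeq(P,s)$ makes $(P,s)$ isomorphic to a $p'$-quotient of $(Q,t)$. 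For the reverse implication, Theorem~\ref{long}(i), applied with its pair $(P,s)$ replaced by $(Q,t)$, says that every $\DD$-pair isomorphic to a $p'$-quotient of $(Q,t)$ is isomorphic to $(\tilde Q,\tilde t)$; since $(P,s)$ is assumed to be a $\DD$-pair isomorphic to a $p'$-quotient of $(Q,t)$, we get $(P,s)\simeq(\tilde Q,\tilde t)$.

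Both conditions on $(Q,t)$ are $H$-conjugation invariant, so the above equivalence gives a bijection between a set of representatives of the $H$-conjugacy classes of pairs $(Q,t)$ in $H$ with $(P,s)$ isomorphic to a $p'$-quotient of $(Q,t)$ and a set of representatives of the $H$-conjugacy classes of pairs $(Q,t)$ in $H$ with $(\tilde Q,\tilde t)\simeq(P,s)$. Combining with the first paragraph, $\dim_\FF S_{\Ps,W_{P,s}}(H)$ equals the number of conjugacy classes of pairs $(Q,t)$ in $H$ with $(\tilde Q,\tilde t)\simeq(P,s)$, as claimed.

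I do not expect a real obstacle here: the substantive content is already contained in Theorem~\ref{long} and in Propositions~\ref{generators} and~\ref{inclusions}. The only point that requires care is the combinatorial bookkeeping — verifying that the two descriptions of the relevant finite set of pairs agree — which is exactly the equivalence established in the second paragraph.
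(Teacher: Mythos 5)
Your proposal is correct and follows essentially the same route as the paper: identify $S_{\Ps,W_{P,s}}(H)$ with $\beps(H)$ via the corollary, read off a basis of primitive idempotents from Proposition~\ref{generators}, and use Theorem~\ref{long}(i) (with the roles of the pairs swapped) to translate ``$(P,s)$ is a $p'$-quotient of $(Q,t)$'' into ``$(\tilde Q,\tilde t)\simeq(P,s)$''. The only difference is that you spell out the bookkeeping (ideal structure, conjugation invariance, both directions of the equivalence) that the paper leaves implicit.
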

\begin{proof}
By Proposition \ref{generators} $\beps(H)$ is generated by the idempotents $F^H_{Q,t}$ where $(Q,t)$ is a pair in $H$ with the property that the pair $(\tilde{P},\tilde{s})\simeq (P,s)$ is isomorphic to a $p'$-quotient of the pair $(Q,t)$. Since $(P,s)$ is a $\DD$-pair, Theorem \ref{long} implies that $(\tilde{Q},\tilde{t})\simeq (P,s)$. The result follows.
\end{proof}

\begin{corollary}
Let $H$ be a finite group. The $\FF$-dimension of $S_{1,\FF}(H)$ is equal to the number of isomorphism classes of simple $kH$-modules. 
\end{corollary}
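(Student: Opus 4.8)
The plan is to read the statement off Theorem~\ref{dimension}, applied to the pair $(1,1)$, and then to recognise the resulting number as the count of $p$-regular conjugacy classes of $H$, which by Brauer's theorem is the number of isomorphism classes of simple $kH$-modules. First I would note that $(1,1)$ is a $\DD$-pair: the group $\langle 1\cdot 1\rangle=1$ plainly has no nontrivial normal $p'$-subgroup, equivalently $C_{\langle 1\rangle}(1)=1$, so Lemma~\ref{DDpair} applies. Hence $S_{1,\FF}$ is exactly the simple functor $S_{\langle 1\cdot 1\rangle,W_{1,1}}$ attached to this $\DD$-pair, and Theorem~\ref{dimension} yields
\[
\dim_\FF S_{1,\FF}(H)=\#\big\{\,H\text{-conjugacy classes of }(Q,t)\in\mathcal{Q}_{H,p}\ \text{such that}\ (\tilde{Q},\tilde{t})\simeq(1,1)\,\big\}.
\]

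The key step is then to identify these pairs. By definition $(\tilde{Q},\tilde{t})$ represents the isomorphism class of the pair $\big(QC_{\langle t\rangle}(Q)/C_{\langle t\rangle}(Q),\,tC_{\langle t\rangle}(Q)\big)$, whose $p$-group part is $QC_{\langle t\rangle}(Q)/C_{\langle t\rangle}(Q)\cong Q/(Q\cap C_{\langle t\rangle}(Q))$. Since $Q$ is a $p$-group while $C_{\langle t\rangle}(Q)\leqslant\langle t\rangle$ is a $p'$-group, we have $Q\cap C_{\langle t\rangle}(Q)=1$, so that $p$-group part is isomorphic to $Q$ itself; thus $(\tilde{Q},\tilde{t})\simeq(1,1)$ forces $Q=1$, and conversely if $Q=1$ then $C_{\langle t\rangle}(Q)=\langle t\rangle$ and the whole pair collapses to $(1,1)$. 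Hence the pairs contributing to the count are precisely the $(1,t)$ with $t$ a $p'$-element of $H=N_H(1)$, and $(1,t)$, $(1,t')$ lie in the same $H$-orbit if and only if $t$ and $t'$ are $H$-conjugate. Therefore $\dim_\FF S_{1,\FF}(H)$ equals the number of $p$-regular conjugacy classes of $H$.

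To conclude I would invoke Brauer's classical theorem: since $k$ is an algebraically closed field of characteristic $p$, the number of isomorphism classes of simple $kH$-modules equals the number of irreducible $p$-modular Brauer characters of $H$, which in turn equals the number of $p$-regular conjugacy classes of $H$; comparison with the previous paragraph gives the claim. I do not expect a real obstacle here; the only point needing a little care is the second paragraph, namely correctly unwinding the definition of $(\tilde{Q},\tilde{t})$ and using that a $p$-subgroup meets the $p'$-group $\langle t\rangle$ trivially. I remark finally that the corollary also follows, once that result is available, from the isomorphism $S_{1,\FF}(H)\cong\FF K_0(kH)$ together with the standard equality between the number of projective indecomposable $kH$-modules and the number of simple $kH$-modules.
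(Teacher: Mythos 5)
Your proof is correct and follows essentially the same route as the paper: apply Theorem~\ref{dimension} to the $\DD$-pair $(1,1)$, show that the pairs $(Q,t)$ with $(\tilde{Q},\tilde{t})\simeq(1,1)$ are exactly those with $Q=1$ and $t$ a $p'$-element, and conclude via the standard equality between the number of $p$-regular classes and the number of simple $kH$-modules. The only cosmetic difference is that you unwind $(\tilde{Q},\tilde{t})$ directly from its definition with $N=C_{\langle t\rangle}(Q)$, whereas the paper argues with an arbitrary normal $p'$-subgroup $N\unlhd\langle Qt\rangle$ realizing the $p'$-quotient; both arguments are equivalent.
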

\begin{proof}
By Theorem \ref{dimension}, $\mathrm{dim}_{\FF}S_{1,\FF}(H)$ is equal to the number of conjugacy classes of pairs $(Q,t)$ in $H$ such that $(\tilde{Q},\tilde{t})\simeq (1,1)$. Suppose $(Q,t)$ is a pair with $(\tilde{Q},\tilde{t})\simeq (1,1)$. Then we have $\tilde{Q}=1$ and $\tilde{t}=1$. So there exists a normal $p'$-subgroup $N$ of $\Qt$ such that $(QN/N,tN)\simeq (1,1)$. Since $|Q|$ and $|N|$ are coprime, this implies that $Q=1$. We also have $t\in N$. But then $N\unlhd \langle t\rangle$ implies that $N=\langle t\rangle$. Therefore the number of conjugacy classes of pairs $(Q,t)$ in $H$ such that $(\tilde{Q},\tilde{t})\simeq (1,1)$ is equal to the number of conjugacy classes of $p'$-elements in $H$. The result follows.
\end{proof}

\begin{theorem}\label{projectivefunctor}
The functor $S_{1,\FF}$ is isomorphic to the functor that sends a finite group $H$ to the subspace $\FF K_0(kH)$ of $\FFT^\Delta(H)$ generated by the projective indecomposable $kH$-modules.
\end{theorem}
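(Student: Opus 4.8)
I would prove this by identifying both $S_{1,\FF}$ and the functor $H\mapsto\FF K_0(kH)$ with the subfunctor $\mathbf{e}_{1,1}$ of $\FFTD$ generated by $F^{1}_{1,1}$, the identity element of the one-dimensional algebra $\FFT(1)\cong\FF$. First I would note that $(1,1)$ is a $\DD$-pair, since the trivial group has no nontrivial normal $p'$-subgroup, by Lemma~\ref{DDpair}. Hence the corollary establishing $\beps\cong S_{\Ps,W_{P,s}}$ for $\DD$-pairs $(P,s)$ applies, and since the only pair $(Q,t)$ with $(Q,t)\simeq(1,1)$ and $\langle Qt\rangle=1$ is $(1,1)$ itself, we get $W_{1,1}=\FF F^{1}_{1,1}\cong\FF$, so $\mathbf{e}_{1,1}\cong S_{1,\FF}$. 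It therefore suffices to show that for every finite group $H$ the subspace $\mathbf{e}_{1,1}(H)$ of $\FFT(H)$ equals $\FF K_0(kH)$; once this is proved, the subfunctor structure carried by $\mathbf{e}_{1,1}$ is precisely the functor structure on $H\mapsto\FF K_0(kH)$ asserted in the statement, and the required isomorphism is $\mathbf{e}_{1,1}\cong S_{1,\FF}$.

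Next I would make $\mathbf{e}_{1,1}(H)$ explicit. By Proposition~\ref{generators}, $\mathbf{e}_{1,1}(H)$ is the $\FF$-span of the primitive idempotents $F^{H}_{Q,t}$, with $(Q,t)$ running over conjugacy classes of pairs in $H$ such that $(1,1)$ is isomorphic to a $p'$-quotient of $(Q,t)$. Exactly as in the proof of the corollary computing $\dim_\FF S_{1,\FF}(H)$, this condition forces $Q=1$ (a $p$-subgroup contained in a normal $p'$-subgroup is trivial) and $\langle t\rangle$ to be a $p'$-group, and conversely every $p'$-element $t$ of $H$ gives such a pair $(1,t)$; so $\mathbf{e}_{1,1}(H)$ is spanned by $\{F^{H}_{1,t}\}$ as $t$ runs over conjugacy classes of $p'$-elements of $H$. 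Specialising the idempotent formula of \cite[Theorem 4.12]{BT} to $P=1$, so that the only admissible subgroup $L$ is $\langle t\rangle$ itself and $k^{\langle t\rangle}_{\langle t\rangle,\varphi}=k_{\varphi}$, yields
\[
F^{H}_{1,t}=\frac{1}{|C_H(t)|}\sum_{\varphi\in\widehat{\langle t\rangle}}\tilde{\varphi}(t^{-1})\,\Ind^{H}_{\langle t\rangle}k_{\varphi}.
\]
Since $t$ is a $p'$-element, the algebra $k\langle t\rangle$ is semisimple, so each $k_{\varphi}$ is a projective $k\langle t\rangle$-module; hence $\Ind^{H}_{\langle t\rangle}k_{\varphi}$ is a projective $kH$-module, and $F^{H}_{1,t}\in\FF K_0(kH)$, recalling that projective modules are $p$-permutation and that the injection $K_0(kH)\hookrightarrow T(H)$ identifies $\FF K_0(kH)$ with the span of the classes of the projective indecomposable $kH$-modules. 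This gives $\mathbf{e}_{1,1}(H)\subseteq\FF K_0(kH)$.

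Finally I would close the argument by a dimension count. By the corollary computing $\dim_\FF S_{1,\FF}(H)$, this dimension equals the number of isomorphism classes of simple $kH$-modules, which equals the number of isomorphism classes of projective indecomposable $kH$-modules, i.e. $\dim_\FF\FF K_0(kH)$. Since $\mathbf{e}_{1,1}\cong S_{1,\FF}$, we have $\dim_\FF\mathbf{e}_{1,1}(H)=\dim_\FF\FF K_0(kH)$, which together with the inclusion above forces $\mathbf{e}_{1,1}(H)=\FF K_0(kH)$ for every finite group $H$. The isomorphism $S_{1,\FF}\cong\FF K_0(k-)$ follows at once. The one step that needs genuine care is the projectivity of $F^{H}_{1,t}$ extracted from the specialised idempotent formula; everything else is bookkeeping with the structural results already established.
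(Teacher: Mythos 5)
Your proof is correct, but it takes a genuinely different route from the paper's. The paper works directly with the construction $S_{1,\FF}=L_{1,\FF}/J_{1,\FF}$: it identifies $L_{1,\FF}(H)=\FFTD(H,1)$ with $\FF K_0(kH)$ (twisted diagonal vertices in $H\times 1$ are trivial, so such summands are projective), and then shows $J_{1,\FF}(H)=0$ because the pairing $\langle W,V\rangle=\dim_k\Hom_{kH}(W,V)$ is given by the Cartan matrix of $kH$, which is non-degenerate; functoriality is then immediate. You instead realize $S_{1,\FF}$ inside $\FFTD$ as the subfunctor $\mathbf{e}_{1,1}$ (legitimate, since $(1,1)$ is a $\DD$-pair and $W_{1,1}=\FF F^1_{1,1}\cong\FF$), compute $\mathbf{e}_{1,1}(H)$ via Proposition \ref{generators} as the span of the idempotents $F^H_{1,t}$ with $t$ running over $p'$-classes, verify from the specialized idempotent formula that each $F^H_{1,t}$ is a combination of projective modules, and close with a dimension count (number of $p'$-classes $=$ number of simple $kH$-modules $=\dim_\FF \FF K_0(kH)$). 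Both arguments are sound, and your specialization of the idempotent formula at $P=1$ is correct. The paper's approach buys the extra information that $J_{1,\FF}=0$, i.e.\ that $L_{1,\FF}$ is already simple, at the cost of invoking non-degeneracy of the Cartan matrix; your approach buys an explicit description of $\FF K_0(kH)$ as the ideal of $\FFT(H)$ spanned by the $F^H_{1,t}$, and exhibits $S_{1,\FF}$ as a subfunctor (rather than a quotient) of $\FFTD$, relying instead on Brauer's count of simple modules, which is already embedded in the corollary on $\dim_\FF S_{1,\FF}(H)$ that you cite. A minor simplification: since the $F^H_{1,t}$ are distinct primitive idempotents, they are linearly independent, so $\dim_\FF\mathbf{e}_{1,1}(H)$ equals the number of $p'$-classes directly, and you could conclude without routing through that corollary.
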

\begin{proof}
Let $H$ be a finite group. We have
\[
S_{1,\FF}(H)=(\FFTD(H,1)\otimes_{\FF} \FF )/J_{1,\FF}(H)\cong \FFTD(H,1)/J_{1,\FF}(H)
\]
where $J_{1,\FF}(H)= \{\phi\in \FFTD(H,1): \forall \psi \in \FFTD(1,H), (\psi\circ \phi)\cdot 1=0\}$. Now $\FFTD(H,1)$ is isomorphic to the subspace $\FF K_0(kH)$ of $\FFT(H)$ generated by the isomorphism classes of projective indecomposable $kH$-modules. Similarly any $W\in \FFTD(1,H)$ can be identified with $W^*\in \FF K_0(kH)$. As in \cite{MDP} we have the following:\\
For any $p$-permutation $kH$-modules $V$ and $W$ we have
\[
(W^*\otimes_{kH} V)\cdot 1=\mathrm{dim}_{k}(W^*\otimes_{kH} V)=\mathrm{dim}_{k}(\Hom_{kH}(W,V)).
\]
Therefore $J_{1,\FF}(H)$ is the right kernel of the bilinear form
\[
<-,->: \FF K_0(kH)\to \FF
\]
defined as $<W,V>:=\mathrm{dim}_k(\Hom_{kH}(W,V))$.
But the matrix that represents this bilinear form is the Cartan matrix of $kH$. Since the Cartan matrix of a group algebra is non-degenerate, it follows that $J_{1,\FF}(H)=0$. Therefore we have 
\[
S_{1,\FF}(H)=\FFTD(H,1)\otimes_{\FF} \FF\cong \FFTD(H,1)\cong \FF K_0(kH).
\]
Note that both of these isomorphisms are functorial in $H$. The result follows. 
\end{proof}






\end{document}